\documentclass{amsart}

\usepackage{amsthm,amsfonts,amsmath,amssymb,latexsym,epsfig,mathrsfs,yfonts,marvosym}
\usepackage[usenames]{color}

\DeclareMathAlphabet\oldmathcal{OMS}        {cmsy}{b}{n}
\SetMathAlphabet    \oldmathcal{normal}{OMS}{cmsy}{m}{n}
\DeclareMathAlphabet\oldmathbcal{OMS}       {cmsy}{b}{n}
 
\usepackage{eucal}

\usepackage{graphicx}
\usepackage[all]{xy}
\usepackage{epsfig}

\newtheorem{theorem}{Theorem}[section]

\newtheorem{proposition}[theorem]{Proposition}
\newtheorem{corollary}[theorem]{Corollary}
\newtheorem{definition}[theorem]{Definition}
\newtheorem{question}{Question}
\newtheorem{ack}{Acknowledgments} 
\newenvironment{example}{\medskip \refstepcounter{theorem}
\noindent  {\bf Example \thetheorem}.\rm}{\,}
\newenvironment{remark}{\medskip \refstepcounter{theorem}
\noindent  {\bf Remark \thetheorem}.\rm}{\,}
\newtheorem{conjecture}[theorem]{Conjecture}

\renewcommand{\thetheorem}{\thesection.\arabic{theorem}}
\def\d{\partial}

\def\<{\langle}

\def\>{\rangle}

\def\BOne{{\mathchoice {\rm 1\mskip-4mu l} {\rm 1\mskip-4mu l}
                          {\rm 1\mskip-4.5mu l} {\rm 1\mskip-5mu l}}}
\def\Ric{{\rm Ric}}

\def\fract#1#2{\raise4pt\hbox{$ #1 \atop #2 $}}
\def\decdnar#1{\phantom{\hbox{$\scriptstyle{#1}$}}
\left\downarrow\vbox{\vskip15pt\hbox{$\scriptstyle{#1}$}}\right.}

\def\bbc{{\mathbb C}}

\def\bbn{{\mathbb N}}

\def\bbp{{\mathbb P}}
\def\bbq{{\mathbb Q}}
\def\bbr{{\mathbb R}}

\def\bbz{{\mathbb Z}}

\def\gra{\alpha}

\def\grd{\delta}

\def\grg{\gamma}
\def\gri{\iota}
\def\grk{\kappa}
\def\grl{\lambda}

\def\gro{\omega}

\def\grr{\rho}

\def\grt{\tau}
\def\gru{\upsilon}

\def\grz{\zeta}

\def\grG{\Gamma}

\def\grL{\Lambda}
\def\grO{\Omega}

\def\grS{\Sigma}

\def\bfd{{\bf d}}

\def\bff{{\bf f}}

\def\bfl{{\bf l}}

\def\bfw{{\bf w}}

\def\bfz{{\bf z}}

\def\calc{{\mathcal C}}
\def\calo{{\mathcal O}}

\def\cald{{\mathcal D}}
\def\cale{{\mathcal E}}
\def\calf{{\mathcal F}}

\def\calh{{\mathcal H}}

\def\calk{{\mathcal K}}

\def\calm{{\mathcal M}}

\def\calo{{\mathcal O}}

\def\calr{{\mathcal R}}
\def\cals{{\oldmathcal S}}

\def\calv{{\mathcal V}}
\def\calw{{\mathcal W}}

\def\calz{{\oldmathcal Z}}

\def\Se{Sasaki-Einstein }
\def\la#1{\hbox to #1pc{\leftarrowfill}}
\def\ra#1{\hbox to #1pc{\rightarrowfill}}

\def\ga{{\mathfrak a}}

\def\gc{{\mathfrak c}}

\def\gh{{\mathfrak h}}

\def\gm{{\mathfrak m}}

\def\go{{\mathfrak o}}

\def\gr{{\mathfrak r}}
\def\gs{{\mathfrak s}}
\def\gt{{\mathfrak t}}
\def\gu{{\mathfrak u}}

\def\gA{{\mathfrak A}}

\def\gC{{\mathfrak C}}

\def\gF{{\mathfrak F}}
\def\gG{{\mathfrak G}}

\def\gI{{\mathfrak I}}

\def\gR{{\mathfrak R}}
\def\gS{{\mathfrak S}}

\def\X{\frak{X}}

\def\hook{\mathbin{\hbox to 6pt{%
                 \vrule height0.4pt width5pt depth0pt
                 \kern-.4pt
                 \vrule height6pt width0.4pt depth0pt\hss}}}

\begin{document}
\bibliographystyle{amsalpha}

\title{Sasakian Geometry: the recent work of Krzysztof Galicki}\thanks{During the preparation of this work the author was partially supported by NSF grant DMS-0504367.}

\author{Charles P. Boyer}
\address{Department of Mathematics and Statistics,
University of New Mexico, Albuquerque, NM 87131.}

\email{cboyer@math.unm.edu} 

\maketitle

\centerline{Dedicated to the memory of Kris Galicki}
\bigskip

\tableofcontents

\section{Introduction}

As many know, on July 8, 2007 Kris Galicki met with a tragic accident while hiking in the Swiss Alps. He was flown back home to Albuquerque, New Mexico, USA, where he remained in a coma until his death on September 24, 2007. Kris' last conference talk was an invited talk at the conference to which these proceedings are dedicated. I have been asked by the editors, Rosanna Marinosci and Domenico Perrone, to provide a paper to the proceedings in Kris' honor. I am honored to accept this invitation, but I turn to it with a deep sense of remorse over the loss of a dear friend and colleague. Kris and I had a wonderful and mutually satisfying working relationship, because we very much enjoyed exchanging ideas and insights, and developing the mathematics that follows. Our most recent completed project is our book, Sasakian Geometry \cite{BG05}, and it saddens me immensely that Kris will never see the finished product. 

The paper is organized as follows: In Section \ref{revsas} I give a brief review of Sasakian geometry emphasizing its relation with various other structures, such as contact structures, CR structures, foliations with transverse K\"ahler structures, and conical structures. By in large this material was taken from our book \cite{BG05}, although I do give a different focus on certain things. Chapter 3 is devoted to studying the influence of isolated conical singularities on Sasakian geometry. This method has proved to be particularly productive in the special case of isolated hypersurface singularities arising from weighted homogeneous polynomials. 

 In Section 4 I describe Sasakian geometry in low dimensions. Of particular interest is dimension 5, where Kris and I in collaboration with others have given many examples of Sasaki-Einstein metrics emphasizing the impact of Sasakian geometry on the understanding of Einstein manifolds. Worthy of special mention is the foundational work of Koll\'ar on 5-dimensional Seifert structures and their relation to Sasakian structures. I also take this opportunity to present some new results involving Sasaki-Einstein metrics in dimension 7 which arose out of email exchanges between Kris and myself in June 2007.

Finally, in Section 5, I present mainly the work of others, namely, that of Martelli, Sparks, and Yau, and Futaki, Ono, and Wang regarding toric Sasaki-Einstein structures. Kris was very interested in the very important recent result of Futaki, Ono, and Wang which proves the existence of a Sasaki-Einstein metric within the Sasaki cone of any toric contact manifold of Sasaki type with vanishing first Chern class $c_1(\cald)$ of the contact bundle. We had planned to give a classification of such structures.

\begin{ack}
I would like to thank E. Lerman for discussions involving holomorphic and symplectic fillability, and P. Massot for emails that pointed out some inaccuracies in my presentation of 3-manifold contact topology in earlier versions of this paper, and also gave me to some important references. Of course, any errors or omissions are mine and mine alone. I would also like to thank Evan Thomas for the computer programs that he provided for both Kris and me.
\end{ack}

\section{A Review of Sasakian geometry}\label{revsas}

I begin with a brief description of Sasakian geometry. A detailed discussion of both Sasakian and contact geometry can be found in \cite{BG05}. A Sasakian structure incorporates several well-known geometries, contact geometry with a chosen contact 1-form $\eta,$ CR geometry with a strictly pseudoconvex Levi form, and a 1-dimensional foliation with a transverse K\"ahler structure.

\subsection{Almost Contact and Contact Structures}
A triple $(\xi, \eta, \Phi)$ defines an {\it almost contact structure}
on $M$ if  $\xi$ is a nowhere vanishing vector field,
$\eta$ is a one form, and $\Phi$ is a tensor of type $(1,1)$, such that
\begin{equation}\label{almostcon}
\eta(\xi) = 1 \, , \quad \Phi^2 = -\BOne + \xi \otimes \eta \, .
\end{equation}
The vector field $\xi$ defines the {\it characteristic foliation} 
${\mathcal F}_{\xi}$ with one-dimensional leaves, and the kernel of $\eta$
defines the codimension one sub-bundle ${\mathcal D}=\ker~\eta$. We have the
canonical splitting 
 \begin{equation}
 TM = {\mathcal D} \oplus L_{\xi}\, , \label{cs}
 \end{equation}
where $L_\xi$ is the trivial line bundle generated by $\xi.$
If the 1-form satisfies 
\begin{equation}\label{constr}
\eta\wedge(d\eta)^n\neq 0,
\end{equation} 
then the subbundle $\cald$ defines a contact structure on $M.$ In this case the vector field $\xi$ is called the {\it Reeb vector field}. For every choice of 1-form $\eta$ in the underlying contact structure $\cald,$ this vector field is unique. If $\eta$ defines a contact structure, then the triple $(\xi,\eta,\Phi)$ is said to be a {\it K-contact} structure, if $\pounds_\xi\Phi=0.$ Furthermore, for any contact manifold $(M,\cald)$, we will fix an orientation on $M$ as well as a co-orientation on $\cald.$
Let $\cald_{ann}$ denote the annihilator of $\cald$ in $\grL^1(M)$. Then, $\cald_{ann}$ minus the $0$-section splits as $\cald_{ann}^+\cup \cald_{ann}^-.$ Fixing a section $\eta_0$ of $\cald_{ann}^+$ we can identify the sections $\grG(\cald_{ann}^+)$ with the set $\{\eta ~|~\eta=f\eta_0 ~\text{for some positive function $f$}\}.$ We define a map $\grG(\cald_{ann}^+)\ra{1.5} \X(M)$, where $\X(M)$ denotes the Lie algebra of vector fields on $M,$ by associating to $\eta\in \grG(\cald_{ann}^+)$ its Reeb vector field $\xi,$ and we denote by $\calr^+(\cald)$ its image in $\X(M)$. This map is clearly injective, and we give $\calr^+(\cald)$ the subspace topology as a subspace of the space of smooth sections of the vector bundle $T(M)$ with the $C^\infty$ compact-open topology.

\begin{proposition}\label{Reebvf}
The subspace $\calr^+(\cald)$ is an open convex cone in $\X(M).$
\end{proposition}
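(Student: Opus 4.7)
The plan is to reduce the three assertions to a single clean characterization: $\xi \in \calr^+(\cald)$ if and only if (i) $\xi$ is a \emph{contact vector field}, meaning $[\xi,\grG(\cald)]\subset \grG(\cald)$, and (ii) $\eta_0(\xi)>0$ pointwise (i.e., $\xi$ is positively transverse to $\cald$). The forward direction is immediate from Cartan's formula: if $\xi$ is the Reeb of $\eta = f\eta_0 \in \grG(\cald_{ann}^+)$, then $\pounds_\xi \eta = \iota_\xi d\eta + d(\eta(\xi)) = 0$, which preserves $\ker\eta = \cald$, and $\eta(\xi)=1$ gives $\eta_0(\xi) = 1/f > 0$. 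For the converse, given such $\xi$, set $\eta := \eta_0/\eta_0(\xi) \in \grG(\cald_{ann}^+)$, so that $\eta(\xi)=1$. Since $\xi$ preserves $\cald = \ker\eta$, the 1-form $\pounds_\xi\eta$ also annihilates $\cald$, hence $\pounds_\xi\eta = h\eta$ for some function $h$; evaluating on $\xi$ yields $h = (\pounds_\xi\eta)(\xi) = \pounds_\xi(\eta(\xi)) - \eta(\pounds_\xi \xi) = 0$. Therefore $\iota_\xi d\eta = \pounds_\xi\eta - d(\eta(\xi)) = 0$, exhibiting $\xi$ as the Reeb of $\eta$.

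With this characterization, the cone and convexity assertions follow almost automatically. For the cone property, a direct check shows that $\lambda \xi$ is the Reeb of $\eta/\lambda \in \grG(\cald_{ann}^+)$ for every $\lambda > 0$. For convexity, contact vector fields form a linear subspace of $\X(M)$ (since the Lie bracket is $\bbr$-bilinear in its first entry, so condition (i) is preserved under linear combinations), and the positivity $\eta_0(\xi)>0$ is plainly preserved under convex combinations pointwise; hence $t\xi_1 + (1-t)\xi_2$ inherits both (i) and (ii) from $\xi_1, \xi_2 \in \calr^+(\cald)$ and $t \in [0,1]$, placing it in $\calr^+(\cald)$.

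The subtlest point, and the main obstacle, is the claim of openness: strictly speaking, the space of contact vector fields is a closed (and typically infinite-codimensional) subspace of $\X(M)$, so $\calr^+(\cald)$ is not literally open in $\X(M)$. The natural reading is openness transported from $\grG(\cald_{ann}^+) \subset \grG(\cald_{ann})$, which is patently an open convex cone. I would argue that the Reeb correspondence $\eta \mapsto \xi_\eta$ is a continuous bijection with continuous inverse $\xi \mapsto \eta_0/\eta_0(\xi)$ — well-defined precisely on the locus where (ii) holds, which is $C^\infty$-open in $\xi$ — giving a homeomorphism that transports the open cone structure from the source onto $\calr^+(\cald)$. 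Equivalently, any contact vector field $C^\infty$-close to a given $\xi_0 \in \calr^+(\cald)$ still satisfies $\eta_0(\xi) > 0$ and hence, by the characterization, lies in $\calr^+(\cald)$.
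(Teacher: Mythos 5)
Your argument for the cone and convexity assertions is, modulo packaging, the paper's own: the paper takes $\xi_{12}=t_1\xi_1+t_2\xi_2$ with $t_1,t_2\geq 0$ not both zero, sets $\eta_{12}=\eta_0/\eta_0(\xi_{12})$, notes that $\xi_{12}$ preserves $\grG(\cald)$ because each $\xi_i$ does, deduces $\pounds_{\xi_{12}}\eta_{12}=g\,\eta_{12}$, and kills $g$ by evaluating on $\xi_{12}$ --- which is exactly the ``converse'' half of your characterization applied to $\xi_{12}$. Packaging that computation as an if-and-only-if criterion (infinitesimal preservation of $\cald$ plus positive transversality) is a tidy reorganization, not a different proof. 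Where you genuinely depart from the paper is on openness, and your scruple is warranted: the paper dismisses this with ``follows immediately from positivity,'' but positivity is only half the story. Every Reeb vector field lies in the space of vector fields preserving $\cald$, which is a closed linear subspace of $\X(M)$, identified with $C^\infty(M)$ via the contact Hamiltonian $\xi\mapsto\eta_0(\xi)$ and hence of infinite codimension; so $\calr^+(\cald)$ has empty interior in $\X(M)$ and cannot literally be open there. Your reading --- openness relative to the contact vector fields, equivalently transporting the obviously open cone structure of $\grG(\cald_{ann}^+)$ through the homeomorphism $\xi\mapsto\eta_0/\eta_0(\xi)$ --- is the right repair, and it is consistent with how the paper actually uses openness later, always relative to a finite-dimensional Lie algebra $\gt_k$ (the Sasaki cone $\gt_k^+$ open in $\gt_k$). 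One further caveat worth recording: the pointwise condition $\eta_0(\xi)>0$ is open in the compact-open $C^\infty$ topology only when $M$ is compact, so even the relative openness statement implicitly assumes compactness (or a uniform positive lower bound).
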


\begin{proof}
For $i=1,2$ let $\xi_i\in \calr^+(\cald)$ be the Reeb vector fields for $\eta_i\in \cald_{ann}^+$ with a reference 1-form $\eta_0$. Then 
$$\eta_0(t_1\xi_1 +t_2\xi_2)=t_1\eta_0(\xi_1)+t_2\eta_0(\xi_2)>0$$
for $t_1,t_2\geq 0,$ and not both $0.$ Set $\xi_{12}=t_1\xi_1 +t_2\xi_2,$ and define $\eta_{12}=\frac{1}{\eta_0(\xi_{12})}\eta_0.$
Then $\eta_{12}(\xi_{12})=1,$ and $\xi_{12}\hook d\eta_{12}=\pounds_{\xi_{12}}\eta_{12}.$ Since $\xi_i$ are Reeb vector fields, they leave sections of the contact bundle $\cald$ invariant. So $\xi_{12}=t_1\xi_1 +t_2\xi_2$ also leaves $\cald$ invariant. Thus, $\pounds_{\xi_{12}}\eta_{12}=g\eta_{12}$ for some smooth function $g.$ Evaluating this on $\xi_{12}$ gives $g=(\pounds_{\xi_{12}}\eta_{12})(\xi_{12})=\xi_{12}\bigl(\eta_{12}(\xi_{12})\bigr)=0$ which implies that $\xi_{12}$ is the Reeb vector field of $\eta_{12}.$ That $\calr^+(\cald)$ is open follows immediately from positivity.
\end{proof}

\subsection{Almost CR and CR Structures}
Given an almost contact structure, the tensor field $\Phi$ restricted to the subbundle $\cald$ defines an almost complex structure $J,$ so $(\cald,J=\Phi|_\cald)$ defines a codimension one {\it almost CR structure}\footnote{For us both an almost CR and CR structure will always be of codimension one in which case they are often referred to as (almost) CR structures of {\it hypersurface type}.}.
The condition of integrability of the almost CR structure can be phrased in a variety of ways (cf. \cite{BG05} and references therein); however, from the point of view of CR geometry the most cogent is this. The almost contact structure defines a splitting of the complexification $\cald\otimes \bbc$ as 
$$\cald\otimes \bbc =\cald^{1,0}\oplus \cald^{0,1}$$
where the bundles $\cald^{1,0}$ and $\cald^{0,1}$ correspond to the eigenvalues $+i$ and $-i$, of $J$ respectively, and satisfy 
$$\cald^{1,0}\cap \cald^{0,1}=\{0\}.$$
Let $\grG(E)$ denote the vector space of smooth sections of the vector bundle $E.$ Then the almost CR structure $(\cald,J)$ is said to be {\it formally integrable} if the condition
$$[\grG(\cald^{1,0}),\grG(\cald^{1,0})]\subset \grG(\cald^{1,0})$$
holds. A manifold with a formally integrable CR structure is often called an {\it abstract CR manifold}. An almost CR manifold $M^{2n+1}$ of dimension $2n+1$ is {\it integrable} if about each point there are `holomorphic coordinates', that is complex valued functions $z_1,\ldots,z_n$ such that the differentials $\{dz_i\}_{i=1}^n$ are linearly independent and the functions $z_i$ are annihilated by any section of $\cald^{0,1}.$ Of much interest in CR geometry is the so-called {\it local embeddability problem} \cite{BER99} which amounts to a formally integrable CR structure being integrable. However, for us this problem is moot.

We are interested in the case that the {\it Levi form} $L_\eta :=d\eta \circ (J\otimes \BOne)$ is positive or negative definite. In this case the almost CR structure is said to be {\it strictly pseudoconvex}, and the 1-form $\eta$ defines a {\it contact structure} on $M.$ A deep theorem of Kuranishi \cite{Kur82}  says that any formally integrable strictly pseudoconvex CR manifold of dimension 9 or greater is integrable. This result is known to be false in dimension 3 \cite{BER99}, and as far as I know is still open in dimension 5. It was shown to hold in dimension 7 by Akahori \cite{Aka87}. However, we are mainly interested in  K-contact structures, that is, the Reeb vector field $\xi$ is an infinitesimal CR transformation (see Definition \ref{kconsas} below). In this case it is easy to see that any formally integrable K-contact structure is integrable.

\subsection{Compatible Riemannian Metrics}
Given the triple $(\xi, \eta, \Phi)$, one can then ask for a compatible Riemannian metric $g$ in the sense that 
$$g(\Phi(X), \Phi(Y))=g(X,Y)-\eta (X)\eta(Y) \, .$$ 
Such metrics always exist and then the quadruple $\cals=(\xi,\eta,\Phi,g)$ is called an {\it almost contact metric structure}, and a {\it contact metric structure} in the case that $(\xi, \eta, \Phi)$ defines a contact structure. In the latter case, with $\xi$ and $\eta$ fixed there is a 1-1 correspondence between such compatible Riemannian metrics and $(1,1)$ tensor fields $\Phi$, or alternatively, almost complex structures on the symplectic vector bundle $(\cald,d\eta).$ So given the triple $(\xi,\eta,\Phi)$ satisfying Equations (\ref{almostcon}) and (\ref{constr}), the metric $g$ is given uniquely by
\begin{equation}\label{conmetric}
 g=g_\cald + \eta\otimes \eta =d\eta\circ (\Phi\otimes \BOne) +\eta\otimes \eta.
\end{equation}
On a given manifold we are interested in the set $\calc\calm(M)$ of all such contact metric structures. We give $\calc\calm(M)$ the $C^\infty$ topology as a subspace of the space of smooth sections of the corresponding vector bundles. We often suppress the $M$ and simply write $\calc\calm$ when the manifold is understood.

\begin{definition}\label{kconsas}
A contact metric structure $\cals=(\xi,\eta,\Phi,g)\in \calc\calm$ is said to be a {\bf K-contact} structure if $\pounds_\xi\Phi =0,$ or equivalently $\pounds_\xi g =0,$ and it is said to be a {\bf Sasakian} structure if it is K-contact and the almost CR structure $(\cald,J)$ is integrable.
\end{definition}

Note that for K-contact structures, the Reeb vector field $\xi$ is a Killing field, hence, the name K-contact.
One of the reasons that K-contact and Sasakian structures are tractable is that the characteristic foliation is {\it Riemannian}, or equivalently {\it bundle-like} \cite{Mol88}. 
A (K-contact) Sasakian structure $\cals=(\xi,\eta,\Phi,g)$ determines a strictly pseudoconvex (almost) CR structure $(\cald,J)$ by setting $\cald=\ker~\eta,$ and $J=\Phi|_\cald$. Conversely, given a strictly pseudoconvex (almost) CR structure $(\cald,J)$, we can inquire about the space of (K-contact) Sasakian structures that have $(\cald,J)$ as its underlying (almost) CR structure. A necessary and sufficient condition that a contact metric structure $\cals=(\xi,\eta,\Phi,g)$ be K-contact is that $\xi$ belong to the Lie algebra $\gc\gr(\cald,J)$ of infinitesimal almost CR transformations. The subspace $\gc\gr^+(\cald,J)=\calr^+(\cald)\cap \gc\gr(\cald,J)$ of all Reeb vector fields belonging to a (K-contact) Sasakian structure with underlying (almost) CR structure $(\cald,J)$ is invariant under the group $\gC\gR(\cald,J)$ of CR transformations, and the quotient space $\grk(\cald,J)=\gc\gr^+(\cald,J)/\gC\gR(\cald,J)$ is a cone called the {\it Sasaki cone} \cite{BGS06}. Let $\gt_k$ be the Lie algebra of a maximal torus $T_k$ in $\gC\gR(\cald,J),$ and let $\calw$ denote the Weyl group of a maximal compact subgroup of $\gC\gR(\cald,J).$ Then we can identify the Sasaki cone $\grk(\cald,J)$ with quotient $\gt_k^+/\calw,$ where $\gt_k^+=\gt_k\cap \gc\gr^+(\cald,J).$ By abuse of terminology we often refer to $\gt_k^+$ as the Sasaki cone for convenience. Moreover, it is well-known that the dimension $k$ of the Sasaki cone takes values $1\leq k\leq n+1.$ When $k=n+1$ we are in the realm of toric Sasakian geometry \cite{BG00b,Ler02a} to be discussed briefly later.

It is known that $\gC\gR(\cald,J)$ is a Lie group and there are only two cases when it is non-compact \cite{Lee96,Sch95}, when $M$ is the Heisenberg group with its standard CR structure, or when $M=S^{2n+1}$ with its standard CR structure. In the latter case $\gC\gR(\cald,J)=SU(n+1,1)$ \cite{Web77}. Moreover, if $M$ is compact and $(\cald,J)$ is of Sasaki type, but is not the standard CR structure on $S^{2n+1}$ then there is a Sasakian structure $\cals\in {\mathcal S}(\cald,J)$ such that $\gC\gR(\cald,J)=\gA\gu\gt(\cals),$ the automorphism group of $\cals$ \cite{BGS06}.

We now define various subspaces of $\calc\calm$ of interest.

\begin{definition}\label{cmsub}
If $\calc\calm$ denotes the space of all contact metric structures on $M,$ then we define
\begin{enumerate}
\item $\calk$, the subspace consisting of all K-contact structures in $\calc\calm;$
\item ${\mathcal S},$ the subspace consisting of all Sasakian structures in $\calc\calm;$
\item $\calc\calm(\cald)$, the subspace of all contact metric structures with underlying contact structure $\cald;$
\item $\calc\calm(\cald,J)$, the subspace of all contact metric structures with underlying CR structure $(\cald,J).$
\item $\calc\calm(\calf_\xi)$ the subspace of all contact metric structures whose Reeb vector field belongs to the foliation $\calf_\xi$ for a fixed Reeb vector field $\xi.$
\item The corresponding definitions in items (3)-(5) with $\calc\calm$ replaced by $\calk$ or ${\mathcal S}.$
\end{enumerate}
\end{definition}

There are obvious inclusions ${\mathcal S}(\cald)\subset \calk(\cald)\subset \calc\calm(\cald),$ etc.

\subsection{The Characteristic Foliation and Basic Cohomology}
The interplay between the contact/CR structure point of view on the one hand and the characteristic foliation point of view on the other will play an important role for us. Here we concentrate on the latter.

Recall that given a foliation $\calf$ on a manifold $M$ an $r$-form $\gra$ is {\it basic} if for all vector fields $V$ that are tangent to the leaves of $\calf$ the following conditions hold:
\begin{equation}\label{basic}
V\hook \gra=0 \qquad \pounds_V\gra=0.
\end{equation}
So on a foliated manifold $M$ we can consider the subalgebra $\grO^r_B(\calf)\subset \grO^r(M)$ of basic differential $r$-forms on $M.$ It is easy to see that exterior differentiation takes basic forms to basic forms, and gives rise to a de Rham cohomology theory, called {\it basic cohomology} whose groups are denoted by $H^r_B(\calf),$ and their cohomology classes by $[\gra]_B.$ These groups applied to the characteristic foliation play an important role in Sasakian and K-contact geometry \cite{BG05}. Indeed, for a K-contact manifold we have the exact sequence 
\begin{equation}\label{bascohseq}
\cdots\ra{2.1}H_B^p(\calf_\xi)\fract{\gri_*}{\ra{2.1}}H^p(M,\bbr)\fract{j_p}{\ra{2.1}}
H_B^{p-1}(\calf_\xi) \fract{\grd}{\ra{2.1}}
H^{p+1}_B(\calf_\xi)\ra{2.1}\cdots\, ,
\end{equation}
where $\grd$ is the connecting homomorphism given by
$\grd[\gra]_B=[d\eta\wedge \gra]_B=[d\eta]_B\cup[\gra]_B,$ and
$j_p$ is the map induced by $\xi\hook.$ In particular, the beginning of the sequence gives two important pieces of the puzzle, namely
\begin{equation}\label{bascohseq2}
 H^1_B(\calf_\xi)\approx H^1(M,\bbr), \qquad 0\ra{1.0}\bbr \fract{\grd}{\ra{1.0}} H^2(\calf_\xi) \fract{\gri_*}{\ra{1.0}} H^2(M,\bbr)\ra{1.0} \cdots .
\end{equation}
The closed basic 2-form $d\eta$ defines a non-trivial class $[d\eta]_B\in H^2(\calf_\xi)$ which is zero in $H^2(M,\bbr).$ In fact, the kernel of $\gri_*$ at level 2 is generated by $[d\eta]_B.$ Another important basic cohomology class is the {\it basic first Chern class} $c_1(\calf_\xi),$ the existence of which is due to the fact that on a K-contact manifold the transverse Ricci tensor is basic. Note that $\gri_*c_1(\calf_\xi)=c_1(\cald),$ the real first Chern class of the almost complex vector bundle $\cald.$ Of particular interest is the case $c_1(\cald)=0$. In this case the exact sequence (\ref{bascohseq2}) implies that there is an $a\in \bbr$ such that $c_1(\calf_\xi)=a[d\eta]_B.$ This gives rise to a rough classification of types of K-contact (Sasakian) structures according to whether $c_1(\calf_\xi)$ is positive definite, negative definite, zero, or indefinite. Note that indefinite Sasakian structures can only occur when the underlying contact bundle $\cald$ satisfies $c_1(\cald)\neq 0$. Here we are mainly concerned with the case $c_1(\cald)=0$. For some examples of indefinite Sasakian structures see Example 4.3 of \cite{BGO06}.

More generally, given two contact metric structures $\cals,\cals'\in \calc\calm(\calf_\xi),$ the following relations must hold
\begin{equation}\label{def1}
\xi'= a^{-1}\xi, \qquad \eta'=a\eta +\grz,
\end{equation}
where $a\in \bbr\setminus \{0\}$ and $\grz$ is a basic 1-form. Thus, $\calc\calm(\calf_\xi)$ decomposes as a disjoint union 
\begin{equation}\label{decomp1}
\calc\calm(\calf_\xi)= \bigcup_{a\in \bbr^+} \calc\calm(a^{-1}\xi) \sqcup \calc\calm(-a^{-1}\xi),
\end{equation}
where $\calc\calm(\xi)$ is the subset of $\calc\calm(\calf_\xi)$ consisting of those elements whose Reeb vector field is $\xi.$ Note that there is a natural involution $\calc\calm(\calf_\xi)\ra{1.5} \calc\calm(\calf_\xi)$ given by conjugation $\cals=(\xi,\eta,\Phi,g)\mapsto \cals^c=(-\xi,-\eta,-\Phi,g).$ It should be clear that conjugation restricts to an involution on the subspaces ${\mathcal S}(\calf_\xi)$ and $\calk(\calf_\xi).$ By fixing an orientation on $M$ and a co-orientation on $\cald$ we restrict ourselves to the subspace $\bigcup_{a\in \bbr^+} \calc\calm(a^{-1}\xi).$

Another important notion is that of {\it transverse homothety}. We say that a contact metric structure $\cals_a=(\xi_a,\eta_a,\Phi_a,g_a)$, labelled by $a\in \bbr^+$, is obtained from $\cals\in \calc\calm(\calf_\xi)$ by a {\it transverse homothety} if  
\begin{equation}\label{transhomoth}
\xi_a=a^{-1}\xi, \quad \eta_a=a\eta, \quad \Phi_a=\Phi, \quad
g_a=ag+(a^2-a)\eta\otimes \eta.
\end{equation}
Clearly, this gives rise to a 1-parameter family $\cals_a=(\xi_a,\eta_a,\Phi_a,g_a)$ of metrics in $\calc\calm(\calf_\xi).$
One can also view $\calc\calm(\pm\xi)$ as the set of transverse homothety classes in  
$\calc\calm(\calf_\xi)$. It is easy to see that a transverse homothety preserves the subspaces ${\mathcal S}(\calf_\xi)$ and $\calk(\calf_\xi).$ Note that the volume elements are related by $\mu_{g_a}=a^{n+1}\mu_g,$ so fixing a representative of a transverse homothety class fixes the volume and vice versa.

\subsection{Isotopy Classes of Contact Structures}
We are interested in varying the structures discussed above in several ways. First, if we fix a contact structure $\cald$ we can vary the almost complex structure on $\cald,$ namely $J=\Phi|_\cald.$ Then, if we fix the almost CR structure $(\cald,J)$, we can vary the 1-form $\eta$ within $\cald,$ and hence, the characteristic foliation $\calf_\xi.$ 

Alternatively, we can fix the characteristic foliation $\calf_\xi$ and vary the contact structure by deforming the 1-form $\eta\mapsto \eta_t=a\eta + t\grz,$ where $t\in [0,1],$ $a$ is a positive constant and $\grz$ is a basic 1-form. We also require that our variation is through contact forms by assuming that $\eta_t\wedge (d\eta_t)^n\neq 0$ for all $t\in [0,1].$ By Gray's Stability Theorem (cf. \cite{BG05}) all such contact structures are isomorphic on a compact manifold. So we are interested in the {\it isotopy class} $[\cald_t]_I$ of contact structures defined by $\{\cald_t=\ker~\eta_t\}.$ We let $\calc\calm_I$ denote the space of all contact metric structures within a fixed isotopy class of contact structures. On compact manifolds the space $\calc\calm_I$ is path connected, and different isotopy classes label different components of $\calc\calm.$ But as indicated above we are interested in the subspace $\calc\calm_{I}(\xi)$ consisting  of contact metric structures in $\calc\calm_I$ whose Reeb vector field is $\xi.$ Now more generally we have

\begin{definition}\label{cmib}
Fix a contact metric structure $\cals_0=(\xi_0,\eta_0,\Phi_0,g_0)\in \calc\calm_I.$ Then we define the space $\calc\calm_{I,B}$ to be the set of all contact metric structures $\cals=(\xi,\eta,\Phi,g)\in \calc\calm_I$ such that there is a smooth positive function $f$, and a 1-form $\grz_f$ with $\eta=f\eta_0+\grz_f$ and satisfying either of the following conditions
\begin{enumerate}
 \item $\grz_f$ is basic with respect to $\xi,$ or
\item $\grz_f$ has the form $\grz_f=f\grz$ where $\grz$ is a basic 1-form with respect to $\xi_0.$
\end{enumerate}
\end{definition}

Note that the two possibilities correspond to two distinct decompositions giving the non-commutative diagram:

\begin{equation}
\xymatrix{&(\cald,\calf_{\xi_f)}\ar[rd]^{+\grz} &\\
(\cald,\calf_{\xi})\ar[ru]^{\times f}\ar[rd]_{+\grz}&&(\cald_\grz,\calf_{\xi_f})\\
&(\cald_\grz,\calf_{\xi})\ar[ru]_{\times f}&}
\end{equation}

The `high road' (upper path) is $\eta\mapsto f\eta\mapsto f\eta +\grz_f,$ whereas, the `low road' is $\eta\mapsto \eta +\grz\mapsto f(\eta +\grz)$, and the two outcomes are different, in general. The former amounts to first deforming the foliation and then the CR structure, while the latter amounts to first deforming the CR structure and then the foliation. Note that the complex vector bundles $\cald$ and $\cald_\grz$ are isomorphic, so we have $c_1(\cald)=c_1(\cald_\grz).$ The basic first Chern class $c_1(\calf_\xi)$ depends only on the foliation, so it is invariant under the deformation $\eta\mapsto \eta+\grz.$ However, it is not understood generally how $c_1(\calf_\xi)$ changes under a deformation of the foliation through the Sasaki cone $\grk(\cald,J).$ Nevertheless, it is easy to see that $c_1(\calf_\xi)$ is invariant under a transverse homothety (\ref{transhomoth}), so it only depends on the projectivization of $\grk(\cald,J).$

We have 

\begin{definition}\label{isosas}
The space $\calc\calm_{I,B}$ is said to be of {\bf Sasaki type (K-contact type)} if some $\cals\in \calc\calm_{I,B}$ is Sasakian (K-contact), respectively. We denote by ${\mathcal S}_{I,B}, ({\mathcal K}_{I,B})$ the subspace of $\calc\calm_{I,B}$ consisting of Sasakian (K-contact) structures.
\end{definition}

Clearly, we have a natural inclusion ${\mathcal S}_{I,B}\subset {\mathcal K}_{I,B}.$

Accordingly, we define the subspace
\begin{equation}\label{sascr}
{\mathcal S}({\mathcal D},J)=\left\{\cals\in {\mathcal S}_{I,B} ~|~\text{$\cals$ is Sasakian with underlying CR structure $(\cald,J)$}
 \right\}.
\end{equation}

We have

\begin{proposition}\label{Isascon}
The space ${\mathcal S}_{I,B}$ is path connected. 
\end{proposition}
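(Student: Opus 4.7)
The plan is to connect every $\cals \in {\mathcal S}_{I,B}$ to a common base point. Since ${\mathcal S}_{I,B}$ is non-empty by hypothesis, after re-choosing the reference structure we may assume $\cals_0 = (\xi_0, \eta_0, \Phi_0, g_0)$ is itself Sasakian; it then lies in ${\mathcal S}_{I,B}$ via the trivial decomposition $f = 1,\ \zeta_f = 0$. It suffices to build, for each $\cals = (\xi, \eta, \Phi, g) \in {\mathcal S}_{I,B}$, a continuous path in ${\mathcal S}_{I,B}$ joining $\cals$ to $\cals_0$.

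Given the decomposition $\eta = f\eta_0 + \zeta_f$, I follow one of the two routes of the non-commutative diagram displayed before Definition \ref{isosas}, building the path in two segments. I treat case (2), where $\zeta_f = f\grz$ with $\grz$ basic with respect to $\xi_0$; case (1) is handled symmetrically via the ``high road''. For the first segment set $\eta_t = \eta_0 + t\grz$, $t \in [0,1]$. Because $\grz$ is basic with respect to $\xi_0$ we have $\eta_t(\xi_0) = 1$ and $\xi_0 \hook d\eta_t = 0$, so $\xi_0$ remains the Reeb throughout; this is the standard basic $1$-form deformation within the characteristic foliation $\calf_{\xi_0}$, and the transverse holomorphic structure of $\cals_0$ transports along to yield a continuous family of Sasakian structures with unchanged Reeb, ending at $\tilde\cals = (\xi_0, \eta_0 + \grz, \tilde\Phi, \tilde g)$. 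For the second segment, move the Reeb from $\xi_0$ to $\xi$: setting $\cald_\grz = \ker(\eta_0 + \grz) = \ker \eta$, both $\xi_0$ (the Reeb of $\eta_0 + \grz$) and $\xi$ (the Reeb of $f(\eta_0 + \grz) = \eta$) lie in $\calr^+(\cald_\grz)$, so by Proposition \ref{Reebvf} the convex combinations $\xi_s = (1-s)\xi_0 + s\xi$ lie in $\calr^+(\cald_\grz)$ with associated $1$-forms $h_s(\eta_0 + \grz) = h_s\eta_0 + h_s\grz$ still satisfying condition (2). Concatenating the two segments yields a continuous path from $\cals_0$ to $\cals$ in $\calc\calm_{I,B}$.

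The main obstacle is showing that the intermediate structures along the second segment are genuinely Sasakian, not merely contact metric. A priori the CR structure $\tilde J$ on $\cald_\grz$ inherited from $\tilde\cals$ may differ from $J = \Phi|_{\cald_\grz}$ coming from $\cals$, so one cannot invoke a single fixed Sasaki cone. The crucial step will be the convexity of the Sasaki cone $\gc\gr^+(\cald_\grz, J) = \calr^+(\cald_\grz) \cap \gc\gr(\cald_\grz, J)$ inside the Lie algebra $\gc\gr(\cald_\grz, J)$ of infinitesimal CR transformations, combined with a choice of compatible tensors $(\Phi_s, g_s)$ along the Reeb-interpolating family that smoothly transitions between $\tilde J$ and $J$; once this is in place, each $\cals_s$ is a bona fide Sasakian structure in ${\mathcal S}_{I,B}$ and the concatenated path lies in ${\mathcal S}_{I,B}$.
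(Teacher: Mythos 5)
Your overall route coincides with the paper's: a first segment that deforms the contact form by the basic $1$-form, $\eta_0\mapsto\eta_0+t\grz$, keeping the Reeb field $\xi_0$ and transporting the transverse holomorphic structure, followed by a second segment that moves the Reeb field from $\xi_0$ to $\xi$ inside the common contact structure $\cald_\grz$. The first segment is fine; it is exactly the deformation $\cals''=(\xi_0,\eta_0+\grz,\Phi_0-\xi_0\otimes \grz\circ\Phi_0,g'')$ that the paper uses, with the interpolation parameter made explicit.

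The second segment contains a genuine gap, which you name yourself but do not close. Proposition \ref{Reebvf} only says that $\calr^+(\cald_\grz)$ is a convex cone, so the interpolated fields $\xi_s=(1-s)\xi_0+s\xi$ are Reeb fields of contact forms $h_s(\eta_0+\grz)$; this produces a path of contact metric structures, not of Sasakian ones. To remain in ${\mathcal S}_{I,B}$ you need each $\xi_s$ to preserve an integrable CR structure compatible with $d\eta_s$ on $\cald_\grz$, i.e.\ you need $\xi_0$ and $\xi$ to lie in $\gc\gr^+(\cald_\grz,J)$ for one and the same integrable $J$. That is precisely what the paper asserts when it places $\cals''$ and $\cals_1$ in a single Sasaki cone $\grk(\cald_\grz,J)$: that cone is convex, hence path connected, and each of its points canonically determines a Sasakian structure with the fixed CR structure $(\cald_\grz,J)$, so the path automatically consists of Sasakian structures. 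Your proposed remedy --- ``a choice of compatible tensors $(\Phi_s,g_s)$ \dots\ that smoothly transitions between $\tilde J$ and $J$'' --- is exactly the hard part and is left undone: if $\tilde J\neq J$ there is no reason a path of compatible almost complex structures joining them stays integrable, nor that the moving field $\xi_s$ preserves it, and the space of transverse complex structures is neither convex nor, in general, connected. So the argument is incomplete at its crux; what is missing is the identification of the underlying CR structures of $\tilde\cals$ and $\cals$ (or a path between them through integrable, $\xi_s$-invariant structures), after which the convexity of the single cone $\gc\gr^+(\cald_\grz,J)$ finishes the proof as in the paper.
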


\begin{proof}
Fix a Sasakian structure $\cals_0=(\xi_0,\eta_0,\Phi_0,g_0)\in {\mathcal S}_{I,B}$ and let $\cals_1=(\xi_1,\eta_1,\Phi_1,g_1)$ be another Sasakian structure in ${\mathcal S}_{I,B}.$ Then $\eta_1=f\eta_0+\grz_f$ where $\grz_f$ satisfies one of the two conditions of Definition \ref{cmib}. If condition (1) holds, then as in the proof of Proposition 7.5.7 of \cite{BG05} the structure $\cals'=(f\eta_0,\xi_1,\Phi_1+\xi_1\otimes \grz_f\circ \Phi_1,g')$ belongs to a path component labelled by $H^1(M,\bbz)$, where the metric $g'$ is determined by Equation (\ref{conmetric}), is Sasakian. But since $\cals_1$ and $\cals_0$ are isotopic, $\cals'$ and $\cals_0$ lie in the same Sasaki cone which is path connected, the result follows. 

On the other hand if condition (2) of Definition \ref{cmib} holds, the contact metric structure $\cals''=(\xi_0,\eta_0+\grz,\Phi_0-\xi_0\otimes \grz\circ\Phi_0,g'')$, where $g''$ is determined by Equation (\ref{conmetric}), is Sasakian, since $\cals_0$ is Sasakian. But the structure $\cals_1$ is also Sasakian and $\eta_1=f(\eta_0+\grz).$ It follows that the Sasakian structures $\cals_1$ and $\cals''$ lie in the same Sasaki cone which proves the result.
\end{proof}

\subsection{Associated K\"ahler Geometries}
Given a contact manifold $M$ with a chosen contact form $\eta,$ the transverse geometry of the characteristic foliation $\calf_\xi$ is described by the symplectic vector bundle $(\cald,d\eta).$ Moreover,
on the cone $C(M)=M\times {\mathbb R}^{+}$ we also have a symplectic structure given by $d(r^2\eta)$ where $r\in \bbr^+.$ Now if $\cals=(\xi,\eta,\Phi,g)$ is a contact metric structure on $M,$ we have an almost K\"ahler structure on the cone $C(M)$ given by $(d(r^2\eta),I,g_{C(M)})$, where the cone metric $g_{C(M)}$ satisfies
\begin{equation}\label{conemetric}
g_{C(M)}=dr^2 + r^2 g \, , 
\end{equation}
and almost complex structure $I$ is defined by 
$$I(Y)=\Phi (Y) + \eta(Y) r\partial_r \, , \quad I(r\partial_r ) = -\xi \, .$$
Similarly, the transverse structure $(\cald,d\eta,J,g_\cald)$ is also almost K\"ahler. When $\cals$ is K-contact, the transverse structure is bundle-like. This is the situation about which we are most concerned. This leads to

\begin{definition}\label{qreg}
The characteristic foliation $\calf_\xi$ is said to be {\bf
quasi-regular} if there is a positive integer
$k$ such that each point has a foliated coordinate chart $(U,x)$
such that each leaf of $\calf_\xi$ passes through $U$ at most $k$
times. Otherwise $\calf_\xi$ is called {\bf irregular}. If $k=1$ then the foliation is called {\bf
regular}, and we use the terminology non-regular to mean quasi-regular, but 
not regular. We also say that the contact form $\eta$ or that the contact structure is quasi-regular (regular, irregular). 
\end{definition}

It is easy to see that a quasi-regular contact form $\eta$ is K-contact in the sense that there exists a K-contact metric that is compatible with $\eta.$ In this case the quotient space is well-defined, but we need to work in the category of orbifolds, and orbibundles. We refer to Chapter 4 of \cite{BG05} for details. Here we mention the following fundamental structure theorems
\cite{BG00a}:

\begin{theorem}\label{fundthm1}
Let $\cals=(\xi,\eta,\Phi,g)$ be a compact quasi-regular Sasakian (K-contact) structure on a manifold $M$ of dimension $2n+1$, and let $\calz$ denote the space of leaves of the characteristic foliation. Then
the leaf space $\calz$ is a Hodge (symplectic) orbifold with
K\"ahler (almost K\"ahler) metric $h$ and K\"ahler form $\gro$ which defines an integral class
$[\gro]$ in $H^2_{orb}(\calz,\bbz)$ so that $\pi:(M,g) \ra{1.3}
(\calz,h)$ is an orbifold Riemannian submersion. The fibers of $\pi$ are
totally geodesic submanifolds of $M$ diffeomorphic to $S^1.$
\end{theorem}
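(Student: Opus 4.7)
The plan is to realize $\calz$ as an orbifold quotient of $M$ by a locally free circle action generated by the Reeb flow, and then to show that the transverse geometric data on $(M,\cals)$ descend to a Kähler (respectively, almost Kähler) structure on $\calz$ whose Kähler class is integral in orbifold cohomology.

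First I would observe that since $\cals$ is K-contact the Reeb vector field $\xi$ is a Killing field on the compact Riemannian manifold $(M,g)$. Together with quasi-regularity this forces every orbit of the Reeb flow to be closed with period bounded above by the constant $k$ appearing in Definition \ref{qreg}. These orbits are thus the orbits of a locally free $S^1$-action whose isotropy subgroups are finite cyclic subgroups of $S^1$. The slice theorem for compact group actions then endows $\calz = M/S^1$ with the structure of a smooth Hausdorff orbifold, and $\pi\colon M\to\calz$ becomes a principal $S^1$-orbibundle. The circle fibers are geodesics: $g(\xi,\xi)=1$ together with the Killing identity $g(\nabla_X\xi,Y)=-g(X,\nabla_Y\xi)$ gives $g(\nabla_\xi\xi,Y)=-g(\xi,\nabla_Y\xi)=-\tfrac12 Y g(\xi,\xi)=0$, so $\nabla_\xi\xi=0$; one-dimensional geodesic submanifolds are totally geodesic.

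Next I would descend the structures. By the K-contact condition, the tensors $g_\cald$, $J=\Phi|_\cald$, and $d\eta$ are all $\xi$-invariant and are annihilated by contraction with $\xi$; they are therefore basic in the sense of (\ref{basic}) and descend to tensors $h$, $J$, $\omega$ on the orbifold $\calz$ with $\pi^*\omega=d\eta$. The compatibility relations built into Equation (\ref{conmetric}) hold fiber by fiber on $\cald$ and so transfer directly to $\calz$, producing an almost Kähler orbifold $(\calz,h,J,\omega)$. In the Sasakian case the transverse CR structure $(\cald,J)$ is integrable, so $J$ descends to a genuine complex structure on $\calz$ and the triple $(h,J,\omega)$ is Kähler. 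By construction, $h$ is exactly the metric obtained from $g_\cald$ via horizontal lifts, so $\pi$ is an orbifold Riemannian submersion.

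For integrality I would view $\eta$ as a connection 1-form on the principal $S^1$-orbibundle $\pi\colon M\to\calz$. Its curvature is $d\eta=\pi^*\omega$, and the orbifold Boothby–Wang correspondence identifies the (real) orbifold first Chern class of this bundle with $[\omega]$ up to normalization; in particular $[\omega]\in H^2_{orb}(\calz,\bbz)$, making $\calz$ a Hodge orbifold when the transverse structure is Kähler.

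The main obstacle is not any one of the steps above in isolation but rather the careful development of orbifold differential geometry required to make them rigorous when $\calf_\xi$ has non-principal leaves: orbifold de Rham theory for basic forms, orbifold Chern–Weil theory for $S^1$-orbibundles, and integrality of the curvature class all need to be invoked in place of their familiar manifold counterparts. This is precisely the machinery developed in Chapter 4 of \cite{BG05} and in the earlier paper cited as \cite{BG00a}, and I would appeal to it rather than reprove it here.
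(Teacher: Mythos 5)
Your proposal is correct and follows essentially the same route as the source the paper cites for this theorem (the paper itself states it without proof, referring to \cite{BG00a} and Chapter 7 of \cite{BG05}): the quasi-regular Reeb flow of a compact K-contact structure generates a locally free isometric $S^1$-action, the quotient is the orbifold $\calz$, the basic tensors $d\eta$, $\Phi|_\cald$, $g_\cald$ descend to the (almost) K\"ahler data, and $\eta$ viewed as a connection form on the $S^1$-orbibundle yields integrality of $[\gro]$ in $H^2_{orb}(\calz,\bbz)$. The only point worth tightening is the passage from ``all Reeb orbits are closed'' to ``the flow is a genuine $S^1$-action with a common period,'' which requires the Killing property (e.g.\ via the closure of the flow in the compact isometry group, or Wadsley's theorem applied to the geodesic fibers) rather than quasi-regularity alone.
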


Note that when $\cals$ is Sasakian it follows from the Kodaira-Baily Embedding Theorem \cite{Bai57,BG05} that $\calz$ is a projective algebraic variety with an orbifold structure. Conversely,  the inversion theorem holds:

\begin{theorem}\label{fundthm2} 
Let $(\calz,h)$ be a compact Hodge (symplectic) orbifold.
Let $\pi:M\ra{1.3} \calz$ be the $S^1$ orbibundle whose first Chern
class is $[\gro],$ and let $\eta$ be a connection 1-form in $M$ whose
curvature is $2\pi^*\gro,$ then $M$ with the metric
$\pi^*h+\eta\otimes\eta$ is a Sasakian (K-contact) orbifold.  Furthermore, if all the local uniformizing groups inject into the group of the bundle $S^1,$ the total space $M$ is a smooth Sasakian (K-contact) manifold.
\end{theorem}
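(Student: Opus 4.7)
The plan is to build the quadruple $\cals=(\xi,\eta,\Phi,g)$ on the total space $M$ directly from the orbibundle data and verify the axioms of Definition \ref{kconsas} one step at a time. Take $\xi$ to be the infinitesimal generator of the $S^1$-action on $M$, normalized by $\eta(\xi)=1$ (possible because $\eta$ is a connection 1-form on a principal $S^1$ orbibundle), and define $\Phi$ by $\Phi(\xi)=0$ together with $\Phi|_\cald=\tilde J$, where $\cald=\ker\eta$ is the horizontal distribution of the connection and $\tilde J$ is the horizontal lift of the (almost) complex structure $J$ on $\calz$.

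The algebraic identities (\ref{almostcon}) are immediate from the definitions. The contact condition (\ref{constr}) follows from the curvature hypothesis $d\eta=2\pi^*\gro$: since $\gro^n$ is a nowhere-vanishing volume form on the $2n$-orbifold $\calz$, the form $\eta\wedge(d\eta)^n=2^n\,\eta\wedge\pi^*\gro^n$ is nowhere zero. Compatibility of $g=\pi^*h+\eta\otimes\eta$ with $\Phi$ reduces to the Hermitian compatibility of $h$ and $J$, since $g|_\cald$ is the pullback of $h$, the field $\xi$ is $g$-orthogonal to $\cald$ of unit length, and $\Phi|_\cald$ is the horizontal lift of $J$.

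For the K-contact property it remains to check $\pounds_\xi g=0$ and $\pounds_\xi\Phi=0$. Both $\pi^*h$ and $\eta$ are $S^1$-invariant ($\pi^*h$ tautologically, $\eta$ because it is a connection form on an abelian principal orbibundle); hence $g$ and the orthogonal splitting $TM=\cald\oplus L_\xi$ are $\xi$-invariant, and $\tilde J$ is invariant because horizontal lift commutes with the $S^1$-action. When $\calz$ is Hodge, so that $J$ is integrable, I would upgrade K-contact to Sasakian by verifying $[\grG(\cald^{1,0}),\grG(\cald^{1,0})]\subset\grG(\cald^{1,0})$. For horizontal lifts $\tilde X,\tilde Y$ of vector fields $X,Y$ of type $(1,0)$ on $\calz$, the horizontal part of $[\tilde X,\tilde Y]$ is the lift of $[X,Y]$, which is again $(1,0)$ by integrability of $J$ on $\calz$; the vertical part equals $-d\eta(\tilde X,\tilde Y)\xi=-2\pi^*\gro(\tilde X,\tilde Y)\xi$, which vanishes because $\gro$ is of type $(1,1)$ with respect to $J$.

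The main obstacle is the orbifold bookkeeping for the final assertion. In general the construction above produces only a Sasakian (K-contact) orbifold structure on $M$, because charts on $M$ inherit the local uniformizing groups of $\calz$. However, as in the standard theory of Seifert $S^1$-bundles (cf.\ Chapter 4 of \cite{BG05}), whenever every local uniformizing group $\Gamma_x$ of $\calz$ injects into the $S^1$ structure group of the orbibundle, the induced $\Gamma_x$-action on the $S^1$-fiber above $x$ is free, so the orbifold structure on $M$ has trivial isotropy everywhere and $M$ is a genuine smooth manifold. The $S^1$-invariant tensors $(\xi,\eta,\Phi,g)$ then descend as smooth global tensors, yielding the asserted smooth Sasakian (K-contact) manifold.
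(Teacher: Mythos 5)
The paper states Theorem \ref{fundthm2} without proof, citing \cite{BG00a} and Chapter 7 of \cite{BG05}, and your argument is a correct reconstruction of the standard proof given there: the tensors $(\xi,\eta,\Phi,g)$ are built from the connection and the horizontal lift of $J$, the contact and compatibility conditions follow from $d\eta=2\pi^*\gro$ and the Hermitian property of $(h,J,\gro)$, $S^1$-invariance gives the K-contact condition, integrability of the induced CR structure reduces to integrability of $J$ plus the fact that the $(1,1)$-form $\gro$ kills pairs of $(1,0)$-vectors, and injectivity of the local uniformizing groups into $S^1$ makes the local $\Gamma$-actions on $\tilde U\times S^1$ free, hence $M$ smooth. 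No gaps; this is essentially the same route as the cited sources.
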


\begin{remark} The structure theorems and the discussion above show that
Sasakian geometry is ``sandwiched'' between two K\"ahler geometries as illustrated by the following diagram

\begin{equation}\label{sas}
\begin{matrix}
C(M)&\hookleftarrow&M \\
&{}&\decdnar\pi \\
{}&{}&\calz \\
\end{matrix}
\end{equation}
Kris liked to refer to this as the ``K\"ahler-Sasaki sandwich''.
\end{remark}

\subsection{Extremal Sasakian Metrics}
In recent work \cite{BGS06,BGS07b} Kris and I in collaboration with Santiago Simanca have developed a theory of extremal Sasakian structures. Here the theory parallels to a certain extent the K\"ahlerian case introduced by Calabi \cite{Cal82}. We refer to \cite{Tia00,Sim04} and references therein for a presentation of this case. 

We are interested in the subspace ${\mathcal S}(\xi,\bar{J})$ of ${\mathcal S}_{I,B}$ with a fixed Reeb vector field and a fixed complex structure $\bar{J}$ on the normal bundle $\nu(\calf_\xi).$ Following \cite{BGS06} we assume that $M$ is compact of Sasaki type and denote by ${\mathfrak M}(\xi,\bar{J})$ the set of all compatible Sasakian metrics arising from structures in ${\mathcal S}(\xi, \bar{J})$, and 
define the ``energy functional'' $E:{\mathfrak M}(\xi,\bar{J})\ra{1.4} \bbr$ by
\begin{equation}\label{var}
E(g) ={\displaystyle \int _M s_g ^2 d{\mu}_g ,}\, 
\end{equation}
i.e. the square of the $L^2$-norm of the scalar curvature $s_g$ of $g$. Critical points $g$ of this functional are called {\it extremal Sasakian metrics}, and the associated Sasakian structure $\cals=(\xi,\eta,\Phi,g)$ is called an {\it extremal Sasakian structure}.  Similar to the K\"ahlerian case the Euler-Lagrange equations for this function give
\begin{theorem}\label{ELeqn}
A Sasakian metric $g\in \calm(\xi,\bar{J})$ is a critical point for the energy functional (\ref{var}) if and only if the gradient vector field $\partial^\#_gs_g$ is transversely holomorphic. In particular, Sasakian metrics with constant scalar curvature are extremal.
\end{theorem}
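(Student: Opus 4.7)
The plan is to mirror Calabi's derivation of the Euler--Lagrange equation for extremal Kähler metrics, transferred to the transverse Kähler geometry of $\calf_\xi$. First I would parametrize $\calm(\xi,\bar{J})$ using the transverse $\partial_B\bar\partial_B$-lemma: with $\xi$ and $\bar J$ fixed, every path $\cals_t\in \calm(\xi,\bar{J})$ corresponds to a path of basic Kähler potentials $\phi_t\in C^\infty_B(M)/\bbr$, with
\[
\omega^T_\phi = \tfrac{1}{2}d\eta + \sqrt{-1}\,\partial_B\bar\partial_B\phi,\qquad \eta_\phi = \eta + d^c_B\phi,
\]
and $g_\phi$ reconstructed from $\eta_\phi$ and the deformed $\Phi_\phi$ through \eqref{conmetric}. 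Hence $T_g\calm(\xi,\bar{J})\cong C^\infty_B(M)/\bbr$.

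Next I would compute the first variation of $E$ at $g$. Using the Sasakian identity $s_g = s^T_g - 2n$ for the transverse scalar curvature $s^T_g$, together with the transverse analogues of the standard Kähler formulas for the variation of $s^T_g$ and of the Riemannian volume element, and then applying two successive integrations by parts with respect to the basic Laplacian, one gets
\[
\tfrac{1}{2}\,\tfrac{d}{dt}\Big|_{t=0} E(g_{t\phi}) \;=\; \int_M \phi\,\mathbb{L}_g(s_g)\,d\mu_g,
\]
where $\mathbb{L}_g$ is the formally self-adjoint, fourth order, basic transverse Lichnerowicz operator. By arbitrariness of $\phi\in C^\infty_B(M)/\bbr$, the metric $g$ is a critical point if and only if $\mathbb{L}_g(s_g)=0$.

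Finally I would identify $\ker\mathbb{L}_g$. A transverse Weitzenböck identity for $\mathbb{L}_g$ yields
\[
\int_M f\,\mathbb{L}_g(f)\,d\mu_g \;=\; \|\bar\partial_B\,\partial^\#_g f\|^2_{L^2}
\]
for every basic $f$, so on the compact manifold $M$ the vanishing $\mathbb{L}_g(s_g)=0$ is equivalent to $\bar\partial_B\,\partial^\#_g s_g = 0$, i.e.\ to $\partial^\#_g s_g$ being a transversely holomorphic vector field. The final assertion that constant-scalar-curvature Sasakian metrics are extremal follows at once, since the gradient then vanishes identically.

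The main obstacle I anticipate is the careful bookkeeping required to pass from the standard leaf-space (orbifold) Calabi calculation, which is essentially automatic in the quasi-regular case via the Kähler leaf space $(\calz,h)$ of Theorem \ref{fundthm1}, to a genuinely transverse calculation valid also when $\calf_\xi$ is irregular. One must either invoke density of quasi-regular structures in $\calm(\xi,\bar{J})$, or, preferably, give a direct normal-bundle argument verifying that the transverse Lichnerowicz operator, the transverse $\partial_B\bar\partial_B$-lemma, and the transverse Hodge theory invoked above descend to bona fide operations on basic data for $\nu(\calf_\xi)$.
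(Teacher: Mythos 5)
The paper states Theorem~\ref{ELeqn} without proof, citing \cite{BGS06}, and your outline is essentially the argument given there: Calabi's variational computation transplanted to the transverse K\"ahler geometry of $\calf_\xi$, using basic potentials (via El Kacimi-Alaoui's transverse $\partial_B\bar\partial_B$-lemma and Hodge theory, which is exactly what makes the irregular case work without passing to a leaf space), the resulting fourth-order basic Lichnerowicz operator, and the identification of its kernel with functions having transversely holomorphic gradient. Your proposal is correct and takes the same route as the source.
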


If $\cals$ is an extremal Sasakian metric then so is the Sasakian structures obtain by a transverse homothety. This follows from the easily verified relation $\partial^\#_{g_a}s_{g_a}= a^{-2}\partial^\#_gs_g.$ Notice also that since there are no transversely holomorphic vector fields for negative or null Sasakian structures, the only extremal negative or null Sasakian metrics are ones of constant scalar curvature. 

Also analogously to the K\"ahler case, for any transversally holomorphic vector
field $\d^{\#}_g f$ with Killing potential $f$, we have (see \cite{BGS06}) a Sasaki-Futaki 
invariant given by
\begin{equation}\label{SFinv}
\gS{\mathfrak F}_{\xi}(\d_g^{\#}f)=- \int_M f(s_g-s_{0})d\mu_g \, , 
\end{equation}
where $g$ is any Sasakian metric in ${\mathfrak M}(\xi,\bar{J})$ and $s_0$ is the projection of the scalar curvature $s_g$ onto the constants.
This expression uniquely defines a character on the Lie algebra $\gh(\xi,\bar{J})$ of transversally holomorphic vector fields. Moreover, we have 

\begin{theorem}\label{SFinvthm}
If $\cals=(\xi, \eta, \Phi, g)$
is an extremal Sasakian structure in ${\mathcal S}(\xi, \bar{J})$, 
the scalar curvature $s_g$ is constant if and only
if $\gS{\mathfrak F}_{\xi}$ vanishes identically.
\end{theorem}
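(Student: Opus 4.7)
The forward implication is immediate: if $s_g$ is constant, then $s_g=s_0$ identically, so the integrand in the definition (\ref{SFinv}) vanishes pointwise for every choice of Killing potential $f$, and hence $\gS{\mathfrak F}_{\xi}\equiv 0$. All the real content lies in the converse, and the plan is to reduce it to the classical Calabi/Futaki trick: evaluate the character on the transversally holomorphic vector field built from the scalar curvature itself, and show the pairing becomes a squared $L^2$-norm.

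Assume $\cals\in {\mathcal S}(\xi,\bar J)$ is extremal. By Theorem \ref{ELeqn}, $\partial^{\#}_g s_g$ is a transversally holomorphic vector field, and it has $s_g$ itself as a Killing potential (this is how the gradient is built; here I would cite the relevant setup in \cite{BGS06} to check that $s_g$ is basic in the sense needed for the Sasaki-Futaki formula, which is automatic since the scalar curvature of a Sasakian metric is the pull-back of the transverse scalar curvature plus a constant). Consequently, $s_g$ is a legitimate choice of $f$ in (\ref{SFinv}), and one has
\begin{equation*}
\gS{\mathfrak F}_{\xi}\bigl(\partial^{\#}_g s_g\bigr)=-\int_M s_g\,(s_g-s_0)\, d\mu_g .
\end{equation*}

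Now I exploit the orthogonality built into the definition of $s_0$: since $s_0$ is the $L^2$-projection of $s_g$ onto the constants, $\int_M(s_g-s_0)\,d\mu_g=0$, so adding the zero quantity $-s_0\int_M(s_g-s_0)\,d\mu_g$ to the right-hand side rewrites the integrand and gives
\begin{equation*}
\gS{\mathfrak F}_{\xi}\bigl(\partial^{\#}_g s_g\bigr)=-\int_M(s_g-s_0)^2\,d\mu_g = -\|s_g-s_0\|_{L^2}^2 .
\end{equation*}
If $\gS{\mathfrak F}_{\xi}$ vanishes identically, the left-hand side is zero, forcing $s_g\equiv s_0$ and hence that $s_g$ is constant.

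The step I expect to be the only genuine obstacle is the first one, verifying that the formula (\ref{SFinv}) really applies to $\partial^{\#}_g s_g$ with $s_g$ as the Killing potential. This requires checking that the character $\gS{\mathfrak F}_{\xi}$ of \cite{BGS06} is evaluated on an element of $\gh(\xi,\bar J)$ coming from a basic Killing potential, which ultimately comes down to the fact (used throughout Sasakian analysis) that transverse scalar curvature is a basic function and that Theorem \ref{ELeqn} identifies its gradient as transversally holomorphic. Once this is granted, the argument above is purely formal and mirrors Calabi's original Kähler proof.
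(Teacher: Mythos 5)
Your argument is correct and is precisely the standard Calabi--Futaki computation: the forward direction is trivial, and for the converse the extremality condition (Theorem \ref{ELeqn}) makes $\partial^{\#}_g s_g$ a legitimate input to (\ref{SFinv}) with potential $f=s_g$, after which $\int_M(s_g-s_0)\,d\mu_g=0$ turns the pairing into $-\|s_g-s_0\|^2_{L^2}$. The paper itself states this theorem without proof, citing \cite{BGS06}, and the argument given there is essentially the one you wrote; your worry about $s_g$ being basic is harmless, since $\xi$ is Killing for a Sasakian metric and hence $\xi(s_g)=0$.
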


Of particular interest to us is the case that the contact bundle $\cald$ has vanishing first Chern class $c_1(\cald),$ or a bit more generally $c_1(\cald)$ is a torsion class. A special case of Sasakian structures $\cals=(\xi,\eta,\Phi,g)$ where $c_1(\cald)$ is a torsion class and $g$ has constant scalar curvature is the case of so-called $\eta$-Einstein metrics. These are all extremal Sasakian metrics that satisfy
\begin{equation}\label{etaEin}
\Ric_g=\lambda g+\nu\eta\otimes \eta
\end{equation}
for some constants $\lambda$ and $\nu$, where $\Ric_g$ denotes the Ricci curvature tensor. The scalar curvature 
$s_g$ of these metrics is constant and given by
$s_g=2n(1+\lambda)$.
We refer the reader to \cite{BGM06}, and references therein, for further 
discussion of these metrics.  

Given a Sasakian structure $\cals=(\xi,\eta,\Phi,g)$ in ${\mathcal S}(\xi,\bar{J})$, we can vary the 1-form $\eta$ within the CR structure $(\cald,J),$ and hence, its Reeb vector field. If we vary through Sasakian structures, then the Reeb vector fields vary through the Sasaki cone $\grk(\cald,J)$. One of the main results of \cite{BGS06} says that the set of extremal Sasakian structures is open in the Sasaki cone. 

In \cite{BGS07b} another variational principal was introduced which gives critical points keeping the CR structure fixed and varying the characteristic foliation in the Sasaki cone. This is similar to the K\"ahler case described in \cite{Sim96,Sim00}. Here is the setup. Fix a Sasakian structure $\cals\in {\mathcal S}(\cald,J),$ and let $T_k$ be a maximal torus in $\gC\gR(\cald,J).$ There is a $T_k$ equivariant moment map $\mu:M\ra{1.5} \gt_k^*\approx \bbr^k$ defined by $\langle\mu, \grt\rangle=\eta(X^\grt),$ where $X^\grt$ is the vector field on $M$ associated to $\grt\in \gt_k.$ Now $\mu$ defines the vector subspace $\calh_\cals(M)\subset C^\infty(M)\subset L^2_g(M)$ of {\it Killing potentials} spanned by the functions $\eta(X^\grt).$ Note that the vector space $\calh_\cals(M)$ depends only on the transverse homothety class; however, for ease of notation we just use the subscript $\cals.$ Since $\xi\in \gt_k$, the space $\calh_\cals(M)$ contains the constants. Moreover, we have a splitting with respect to the $L^2_g$-norm, 
$$C^\infty(M) = \calh_\cals(M)\oplus \calh_\cals(M)^\perp,$$
and we have a natural projection $\pi_g:C^\infty(M) \ra{1.5} \calh_\cals(M).$ It is also easy to check that this splitting depends only on the transverse homothety class. Note that when $k=1$ the space of Killing potentials $\calh_\cals(M)$ consists only of the constant functions. We note that the energy functional $E$ of Equation (\ref{var}) has a lower bound, viz.
\begin{equation}\label{extlb}
 E(g)=\int_M s_g^2 d\mu_g \geq
\int_M (\pi_{g}s_g)^2 d\mu_g \,.
\end{equation}
Then the metric $g$ in ${\mathfrak M}(\xi,\bar{J})$ is extremal if
and only if the scalar curvature $s_g$ lies in $\calh_\cals(M),$ that is $s_g$ is a linear combination of Killing potentials.

When the lower bound in Equation (\ref{extlb}) is reached we can ask about varying the Reeb vector field within the Sasaki cone. This is done in \cite{BGS07b}, and is the adabtation to Sasakian geometry of an idea of Simanca \cite{Sim96,Sim00} in the K\"ahlerian case.

We define the energy $\cale$ of a Reeb vector field in the Sasakian cone to be the
functional given by this optimal lower bound, namely $\cale:\kappa({\mathcal D},J) \ra{1.5} \bbr$ defined by
\begin{equation}\label{en}
\cale(\xi) ={\displaystyle \int_M (\pi_{g}s_g)^2 d\mu_g }\,
\end{equation}
The Reeb vector field $\xi \in \kappa ({\mathcal D},J)$ of a Sasakian structure $\cals=(\xi,\eta,\Phi,g)$ is said to be
{\it strongly extremal} if it is a critical point of the functional {\rm (\ref{en})} over the space of Sasakian structures in $\kappa ({\mathcal D},J)$ that fix the volume of $M$. A Sasakian structure $\cals$ is {\it strongly extremal} if its metric is extremal and its Reeb vector field $\xi$ is strongly extremal. Notice that when $\cals$ is an extremal Sasakian structure, the two integrals (\ref{var}) and (\ref{en}) are the same, but the variations are different. The Euler-Lagrange equation for the functional (\ref{en}) is
\begin{equation}
\pi_g \left[ -2n \Delta_B \pi_g s_g -(n-1)(\pi_g s_g)^2\right] +
4n \pi_g s_g =\lambda \, ,
\label{el}
\end{equation}
where $\grl$ is a constant. Thus, $\pi_gs_g$ a constant gives a solution to the Euler-Lagrange equation. This occurs in two cases: (1) The Sasaki cone is one dimensional, and (2) the scalar curvature $s_g$ is itself constant. The first case is rather trivial since the only variation is a transverse homothety, and all these satisfy the equations. Moreover, they do not generally give strongly extremal Sasakian structures, as we shall show later. It is the second case that is of interest. If $s_g$ is constant then the Sasakian structure $\cals$ is strongly extremal. I know no examples of strongly extremal Sasakian structures with $\pi_gs_g$ non-constant. Important known examples of strongly extremal Sasakian structures are the Sasakian-$\eta$-Einstein structures given by Equation (\ref{etaEin}), but they are not the only ones. As shown in \cite{BGS07b} the Wang-Ziller manifolds \cite{WaZi90,BG05} give examples of strongly extremal Sasakian structures with $c_1(\cald)\neq 0.$

\section{Links of Isolated Singularities}

Let $(M,\cals)$ be a compact Sasakian manifold and consider the cone $C(M)=M\times \bbr^+$ over $M.$ Let $Y=C(M)\cup\{r=0\}$ be $C(M)$ with  the cone point added. $Y$ is a complex analytic space with an isolated singularity at $\{0\},$ or equivalently a complex space germ $(Y,0)$. If the Sasakian structure is quasi-regular, then $Y$ has a natural $\bbc^*$-action with a fixed point at $r=0.$ In this case $\Psi-i\xi$ as a holomorphic vector field on $Y$, where $\Psi=r\partial_r,$ is an infinitesimal generator of the $\bbc^*$-action, and $C(M)$ is the total space of a principal $\bbc^*$-orbibundle over the projective algebraic orbifold $\calz.$ Alternatively, this can be phrased in terms of $\bbc^*$-Seifert bundles, cf. Section 4.7 of \cite{BG05}. This general Seifert bundle construction is given in an unpublished work of Koll\'ar \cite{Kol04c}.

Though not entirely understood, it is well-known that much of the topology of $M$ is controlled by the nature of the conical singularity of $Y$ at $r=0$, cf. \cite{GrSt83}. First I mention a recent result of Marinescu and Yeganefar \cite{MaYe07} which says that for any Sasakian manifold there is a CR embedding into $\bbc^N$ for some $N.$ In particular, the topology of $M$ is related to the minimal embedding dimension $N.$ Moreover, the result in \cite{MaYe07} implies that any contact structure of Sasaki type is holomorphically fillable. Explicitly, we have

\begin{definition}\label{fillable}
 A contact manifold $(M,\cald)$ is said to be {\bf holomorphically fillable} if it is contactomorphic to the boundary of a compact strictly pseudoconvex complex manifold $V$. If $V$ can be taken to be Stein then $(M,\cald)$ is said to be {\bf Stein fillable}. 
\end{definition}

Then as noted in \cite{MaYe07} a well-known result of Harvey and Lawson \cite{HaLa75} together with the Marinescu-Yeganefar theorem gives

\begin{theorem}\label{MYthm}
Every compact Sasakian manifold is holomorphically fillable.
\end{theorem}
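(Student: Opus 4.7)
The plan is to chain the two cited ingredients together, inserting one extra step to clean up singularities. To begin, I would note that because $(M,\cals)$ is Sasakian, the underlying CR structure $(\cald,J)$ is automatically strictly pseudoconvex: by Definition~\ref{kconsas} the Levi form $L_\eta=d\eta\circ(J\otimes\BOne)$ is positive definite on $\cald$. I would then apply the Marinescu--Yeganefar theorem to obtain a CR embedding $\iota:M\hookrightarrow \bbc^N$ for some $N$, so that $\iota(M)$ sits in $\bbc^N$ as a compact, oriented, maximally complex, strictly pseudoconvex CR submanifold of real dimension $2n+1\geq 3$.

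Next I would feed $\iota(M)$ into the Harvey--Lawson theorem, according to which every compact, oriented, maximally complex real submanifold of $\bbc^N$ of real dimension at least three bounds a unique complex analytic subvariety $V\subset \bbc^N\setminus \iota(M)$ of complex dimension $n+1$. Strict pseudoconvexity of $\iota(M)$ forces $V$ to lie on the pseudoconvex side, to be relatively compact and Stein, and to have at worst a finite set of isolated singularities in its interior, with $\partial\overline{V}=\iota(M)$.

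To upgrade the compact complex space $\overline{V}$ to a genuine complex manifold with boundary, I would invoke Hironaka's resolution of singularities to produce a smooth compact complex manifold-with-boundary $\tilde V$ together with a proper birational morphism $\tilde V\to \overline{V}$ that is a biholomorphism on a neighborhood of $\iota(M)$. Since strict pseudoconvexity is boundary-local and the resolution only modifies the interior, $\tilde V$ is a compact strictly pseudoconvex complex manifold with $\partial \tilde V = \iota(M)$. Because $\iota$ is a CR embedding, it identifies $\cald$ with the bundle of complex tangents to $\iota(M)$, which coincides with the contact structure of $\partial\tilde V$; hence $(M,\cald)$ is contactomorphic to $\partial\tilde V$, which is precisely the assertion of holomorphic fillability.

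No deep obstacle arises once the two cited theorems are accepted as black boxes. The one subtle point to keep in mind is that the filling must respect the contact structure $\cald$, not merely the underlying smooth manifold: one must confirm that under $\iota$ the bundle $\cald$ matches the complex-tangent distribution on $\partial\tilde V$, but this is tautological from the definition of a CR embedding together with the fact that on a Sasakian manifold $\cald$ is precisely the maximal complex subspace of $TM$.
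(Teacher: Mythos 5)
Your proposal is correct and follows exactly the route the paper itself indicates: the Marinescu--Yeganefar CR embedding into $\bbc^N$ combined with the Harvey--Lawson boundary theorem, which is all the paper offers by way of proof. The extra step of resolving the finitely many isolated interior singularities of the Harvey--Lawson variety is a reasonable way to make the filling a genuine complex manifold as required by Definition~\ref{fillable}, and it does not change the argument in any essential way.
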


In a similar vein a recent result of Niederkr\"uger and Pasquotto \cite{NiPa07} shows that a compact K-contact manifold is symplectically fillable.
Generally, it appears that Sasakian manifolds are not necessarily Stein fillable as recently shown by Popescu-Pampu \cite{P-P07c}. Thus, they are not, in general, smoothable in the sense of \cite{GrSt83}. 

Further study of these important contact invariants is currently in progress. The one case that has been studied in detail is that of isolated hypersurface singularities arising from weighted homogeneous polynomials which I now discuss.

\subsection{Weighted Hypersurface Singularities}
In \cite{BG01b} Kris and I described a method for proving the existence of Sasaki-Einstein metrics on links of isolated hypersurface singularities that arise from weighted homogeneous polynomials. Many of the results that we obtained since then have involved the natural Sasakian geometry occurring on such links. Here I provide a brief review of the relevant geometry referring to Chapters 4 and 9 of \cite{BG05} for a more thorough treatment. Recall  
that a polynomial $f\in {\mathbb C}[z_0,\ldots,z_n]$ is said to be a 
{\it weighted homogeneous polynomial} of {\it degree} $d$ and 
{\it weight} ${\bf w}=(w_0,\ldots,w_n)$ if for any $\lambda \in 
{\mathbb C}^*=\bbc\setminus \{0\},$ we have
$$
f(\lambda^{w_0} z_0,\ldots,\lambda^{w_n}
z_n)=\lambda^df(z_0,\ldots,z_n)\, .
$$
We are interested in those weighted homogeneous polynomials $f$ whose 
zero locus in ${\mathbb C}^{n+1}$ has only an isolated singularity at the 
origin. We define the {\it link} $L_f({\bf w},d)$ as 
$f^{-1}(0)\cap S^{2n+1}$, where $S^{2n+1}$ is the 
$(2n+1)$-sphere in $\bbc^{n+1}$. By the Milnor fibration theorem 
\cite{Mil68}, $L_f({\bf w},d)$ is a closed $(n-2)$-connected manifold that 
bounds a parallelizable manifold with the homotopy type of a bouquet of $n$-spheres. Furthermore, $L_f({\bf w},d)$ admits a 
Sasaki-Seifert structure 
${\oldmathcal S}=(\xi_{\bf w},\eta_{\bf w},\Phi_{\bf w},g_{\bf w})$ in a 
natural way 
\cite{Abe77,Tak,Var80} (I only recently became aware of the last reference). This structure is quasi-regular, and the contact 
bundle $({\mathcal D},J)$ satisfies $c_1({\mathcal D})=0$. As mentioned previously this latter property implies that $c_1({\mathcal F}_{\xi_{\bf w}})=a[d\eta_{\bf w}]_B$ for some constant $a$, where ${\mathcal F}_{\xi_{\bf w}}$ is the 
characteristic foliation. The sign of $a$ determines the negative, null, 
and positive cases. For any list of positive integers $(l_1,\ldots,l_k)$ it is convenient to set $|\bfl|=\sum_il_i$. Then more generally, for the links of isolated complete interection singularities defined by weighted homogeneous polynomials $\bff=(f_1,\ldots,f_k)$ we have

\begin{proposition}\label{cis}
Let $L_\bff$ be the link of an isolated complete interection singularity defined by weighted homogeneous polynomials $\bff=(f_1,\ldots,f_k)$ of multidegree $\bfd=(d_1,\ldots,d_k)$. Then $L_\bff$ has a natural Sasakian structure $\cals$ which is either positive, negative, or null. In particular, $c_1(\cald)=0.$ Moreover, $\cals$ is positive, negative, or null, depending on whether $|\bfw|-|\bfd|$ is positive, negative, or null, respectively.
\end{proposition}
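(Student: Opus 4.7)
The plan is to restrict the weighted Sasakian structure on $S^{2n+1}$ with Reeb $\xi_\bfw = \sum_i w_i(x_i\partial_{y_i}-y_i\partial_{x_i})$. Because the complete intersection singularity is isolated, $V_\bff := \bff^{-1}(0)\setminus\{0\}$ is a smooth complex submanifold of $\bbc^{n+1}\setminus\{0\}$ of complex dimension $n-k+1$. The weighted Euler field $\Psi = \sum_i w_i(x_i\partial_{x_i}+y_i\partial_{y_i})$ is tangent to $V_\bff$ and transverse to every sphere of constant $r$, so $L_\bff = V_\bff\cap S^{2n+1}$ is a smooth real hypersurface in $V_\bff$. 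Weighted homogeneity of $\bff$ forces the $\bbc^*$-action to preserve $V_\bff$, hence $\xi_\bfw$ is tangent to $L_\bff$, and the restriction of $(\xi_\bfw,\eta_\bfw,\Phi_\bfw,g_\bfw)$ to $L_\bff$ yields a Sasakian structure: it is K-contact since $\xi_\bfw$ is Killing and tangent to $L_\bff$, and its underlying CR structure, inherited from $V_\bff$, is integrable and strictly pseudoconvex. The resulting structure is quasi-regular, with leaf orbifold the weighted complete intersection $\calz_\bff\subset \bbp(\bfw)$ cut out by $\bff$.

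To prove $c_1(\cald)=0$ I would argue by adjunction on $V_\bff$. Since $V_\bff$ is a smooth complete intersection in $\bbc^{n+1}$, $K_{V_\bff}$ is holomorphically trivial: $K_{\bbc^{n+1}}$ is trivialized by $dz_0\wedge\cdots\wedge dz_n$ and the conormal bundle by $df_1\wedge\cdots\wedge df_k$. The holomorphic vector field $\Psi - i\xi_\bfw$ generates the $\bbc^*$-action, is tangent to $V_\bff$, and is nowhere vanishing on $V_\bff\setminus\{0\}$, giving the short exact sequence
\begin{equation*}
0 \to \bbc\langle \Psi - i\xi_\bfw\rangle \to T^{1,0}(V_\bff\setminus\{0\}) \to Q \to 0
\end{equation*}
of holomorphic bundles. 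Triviality of $K_{V_\bff}$ and of the sub-line-bundle forces $c_1(Q)=0$. Transversality of $\Psi$ to $L_\bff$ at radius one makes $\bbc\langle\Psi - i\xi_\bfw\rangle|_{L_\bff}$ a complement to $TL_\bff^{1,0}$ inside $T^{1,0}V_\bff|_{L_\bff}$, so $Q|_{L_\bff}$ is complex-isomorphic to $(\cald,J)$, and $c_1(\cald)=0$ follows.

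For the type statement, $c_1(\cald)=0$ combined with the exact sequence (\ref{bascohseq2}) gives $c_1(\calf_{\xi_\bfw}) = a[d\eta_\bfw]_B$ for some $a\in\bbr$, and I need to pin down its sign. Pass to the K\"ahler leaf orbifold $\calz_\bff$: the basic class $[d\eta_\bfw]_B$ corresponds to the Euler class of the Seifert $S^1$-orbibundle $L_\bff\to\calz_\bff$, which is a positive multiple of $c_1(\calo(1))|_{\calz_\bff}$; and $c_1(\calf_{\xi_\bfw})$ pulls back to $c_1(T\calz_\bff) = -c_1(K_{\calz_\bff})$. Orbifold adjunction in weighted projective space gives $K_{\bbp(\bfw)} = \calo(-|\bfw|)$ and
\begin{equation*}
K_{\calz_\bff} = \calo(|\bfd|-|\bfw|)|_{\calz_\bff},
\end{equation*}
so $c_1(T\calz_\bff)$ is a positive multiple of $(|\bfw|-|\bfd|)\,c_1(\calo(1))|_{\calz_\bff}$. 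Hence $a$ has the same sign as $|\bfw|-|\bfd|$, yielding the claimed trichotomy.

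The main obstacle lies in this last paragraph: the weighted projective space $\bbp(\bfw)$ is singular, so the identity $K_{\bbp(\bfw)} = \calo(-|\bfw|)$ and the ensuing adjunction formula must be interpreted in the orbifold Picard group, together with the identification of basic cohomology on $L_\bff$ with orbifold cohomology on $\calz_\bff$. Both facts are standard in the quasi-regular Seifert framework of Chapter 4 of \cite{BG05}, and the verification reduces to checking that the explicit trivializations above descend correctly through the $\bbc^*$-orbibundle $V_\bff\setminus\{0\}\to\calz_\bff$.
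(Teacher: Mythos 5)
Your argument is correct, and it is the standard one: the paper itself states Proposition \ref{cis} without proof, deferring to Chapters 4 and 9 of \cite{BG05} and the references \cite{Abe77,Tak,Var80}, and the route you take --- restricting the weighted Sasakian structure of $S^{2n+1}$ to $L_\bff$, getting $c_1(\cald)=0$ from the Poincar\'e-residue trivialization of $K_{V_\bff}$ together with the nowhere-vanishing field $\Psi-i\xi_\bfw$ (exactly the Gorenstein observation of Remark \ref{Goren}), and reading off the type from orbifold adjunction $K_{\calz_\bff}=\calo(|\bfd|-|\bfw|)|_{\calz_\bff}$ on the leaf orbifold --- is precisely the one those sources carry out. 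The only caveat you already flag yourself: the adjunction computation must be done in the orbifold Picard group of the quasi-smooth weighted complete intersection (equivalently, one can bypass it by noting that the residue form $\Omega$ has weight $|\bfw|-|\bfd|$ under the $\bbc^*$-action, which directly gives the sign of $a$ in $c_1(\calf_{\xi_\bfw})=a[d\eta_\bfw]_B$).
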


\begin{remark}\label{Goren}
The condition $c_1(\cald)=0$ is equivalent to the condition $c_1(TC(M))=0.$ Since for any Sasakian manifold $M$ the cone $C(M)$ is K\"ahler, $c_1(\cald)=0$ implies the existence of a nowhere vanishing holomorphic $(n+1)$-form $\grO,$ as well as the fact that the cone singularity is what is called a {\it Gorenstein singularity}. We discuss this further in Section \ref{torsas} in the context of toric geometry.  
\end{remark}

The only transversally holomorphic vector fields of a negative or a
null link of an isolated hypersurface singularity are those
generated by the Reeb vector field. These links have a one-dimensional
Sasaki cone, and by the transverse Aubin-Yau theorem 
\cite{BGM06}, any point in this cone can be represented by a Sasakian
structure whose metric is $\eta$-Einstein (see examples of such links 
in \cite{BoGa05a,BG05}). So the entire Sasaki cone consists of strongly extremal Sasakian structures \cite{BGS07b}. Negative Sasaki-$\eta$-Einstein metrics give rise to Lorenzian Sasaki-Einstein metrics \cite{Bau00,BGM06}, and in dimension 5 R. Gomez \cite{Gom08} has recently shown that for all $k\in \bbn$ such metrics exist on the $k$-fold connected sum $k(S^2\times S^3).$ Moreover, using work of Koll\'ar \cite{Kol05b}, J. Cuadros \cite{Cua08} has recently shown that the only simply connected 5-manifolds that can admit null Sasakian structures are $k(S^2\times S^3)$ when $3\leq k\leq 21,$ and all of these do admit such structures. Moreover, both the null and negative cases can occur with moduli. 

\subsection{On the Existence of Sasaki-Einstein Metrics} In the case of positive links there are well-known obstructions to the existence of Sasaki-$\eta$-Einstein metrics (cf. \cite{BG05}). One such obstruction has already been discussed, namely, the Sasaki-Futaki invariant (\ref{SFinv}). Another is the Matsushima-Lichnerowicz type invariant \cite{BGS06}. Both of these invariants rely on the existence of non-trivial transverse holomorphic vector fields. More recently it was observed in \cite{GMSY06} that classical estimates of Bishop and Lichnerowicz, which do not depend on the existence of symmetries, may also obstruct the existence of Sasaki-Einstein metrics. For us the Lichnerowicz obstruction seems more convenient. In particular, 

\begin{proposition}\label{Lich.Ob.Link.Ineq} Let $L_f({\bf w},d)$ be a 
link of an isolated hypersurface singularity with its natural Sasakian 
structure, and let $I=|{\bf w}|-d$ be its index. If 
$$I> n \, \min_{i}{\{w_i\}}\, ,$$
then $L_f({\bf w};d)$ cannot admit any Sasaki-Einstein metric. 
\end{proposition}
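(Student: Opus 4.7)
The plan is to derive a contradiction from Lichnerowicz's first-eigenvalue estimate applied to the coordinate function $z_{i^*}$ of smallest weight $w_{i^*}=\min_i w_i$, viewed as a Laplace eigenfunction on the (putative) Sasaki-Einstein link. Assume, for contradiction, that $L_f(\bfw,d)$ admits a Sasaki-Einstein metric $g$. The link has real dimension $2n-1$ and $\Ric_g=(2n-2)g$, so Lichnerowicz's theorem gives $\lambda_1(-\Delta_g)\geq 2n-1$. The Calabi-Yau cone $C(L_f)$ carries the holomorphic $(n,0)$-form $\grO=\mbox{Res}(dz_0\wedge\cdots\wedge dz_n/f)$, and the Reeb $\xi_{SE}$ of $g$ must satisfy the Einstein normalization $\pounds_{\xi_{SE}}\grO=in\,\grO$. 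Because $f$ is weighted homogeneous precisely for the weights $\bfw$, the requirement that every monomial of $f$ share a common $\xi_{SE}$-charge forces $\xi_{SE}$ to be a positive scalar multiple of $\xi_\bfw$; comparing with $\pounds_{\xi_\bfw}\grO=iI\,\grO$ pins down $\xi_{SE}=(n/I)\,\xi_\bfw$, so that each coordinate $z_i$ has $\xi_{SE}$-charge $q_i=nw_i/I$.

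I would next invoke the standard cone eigenvalue formula. On $C(L_f)$ with cone metric $dr^2+r^2g$, separating radial and transverse parts in the cone Laplacian applied to $z_i=r^{q_i}\psi_i$, with $\psi_i:=z_i|_{L_f}$, yields $\Delta_{C(L_f)}z_i = r^{q_i-2}\bigl[q_i(q_i+2n-2)\psi_i+\Delta_g\psi_i\bigr]$. Since $z_i$ is holomorphic on the K\"ahler cone and hence harmonic, the left side vanishes, giving $-\Delta_g\psi_i=q_i(q_i+2n-2)\psi_i$. The restriction $\psi_{i^*}$ is not identically zero: were it so, the weighted $\bbr^+$-action on the cone would propagate the vanishing to all of $\{f=0\}\setminus\{0\}$, forcing $z_{i^*}\mid f$ and hence $f=c\,z_{i^*}^m$; but then $\{z_{i^*}=0\}$ is smooth, contradicting the isolated-singularity hypothesis on $f$.

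Combining with Lichnerowicz gives $q_{i^*}(q_{i^*}+2n-2)\geq 2n-1$, which factors as $(q_{i^*}-1)(q_{i^*}+2n-1)\geq 0$; since $q_{i^*}>0$ this forces $q_{i^*}\geq 1$. Substituting $q_{i^*}=nw_{i^*}/I$ yields $I\leq n\min_i w_i$, contrary to the hypothesis. The main technical step is the cone eigenvalue identification used above; the most delicate subsidiary point is pinning down $\xi_{SE}$ along the $\xi_\bfw$-ray, which is immediate when the natural Sasaki cone is one-dimensional and follows more generally from the weighted homogeneity of $f$ combined with the $\grO$-charge normalization.
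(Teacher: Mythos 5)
Your argument is, in substance, the Lichnerowicz obstruction of Gauntlett--Martelli--Sparks--Yau \cite{GMSY06}, which is precisely the source this proposition is quoted from (the paper gives no proof of its own). The essential chain is correct: $\Ric_g=(2n-2)g$ on the $(2n-1)$-dimensional link gives $\lambda_1\geq 2n-1$ by Lichnerowicz; a holomorphic function of charge $q$ on the K\"ahler cone is harmonic and restricts to a Laplace eigenfunction on the link with eigenvalue $q(q+2n-2)$; the factorization $(q-1)(q+2n-1)\geq 0$ then forces $q\geq 1$; and applying this to $z_{i^*}$ with charge $q_{i^*}=nw_{i^*}/I$ yields $I\leq n\min_i w_i$. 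Your Nullstellensatz argument that $z_{i^*}$ does not vanish identically on the link is also fine.

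The step you should not have taken is the claim that weighted homogeneity of $f$ together with the normalization $\pounds_{\xi_{SE}}\Omega=in\,\Omega$ pins $\xi_{SE}$ to the ray of $\xi_{\bf w}$. That normalization is a single linear condition on the Sasaki cone, so when the Sasaki cone has dimension $k>1$ it cuts out a $(k-1)$-dimensional slice, not a ray; moreover an arbitrary Reeb field compatible with the induced CR structure need not act diagonally on the ambient coordinates, so the requirement that ``every monomial of $f$ share a common charge'' is not something you may impose on it. (For $f=z_0^2+\cdots+z_n^2$, for instance, the Sasaki cone of the link is of dimension greater than one.) Fortunately this step is also unnecessary: the proposition is an obstruction only for the \emph{natural} Sasakian structure, i.e.\ for Reeb fields on the ray $\bbr^+\xi_{\bf w}$, where the Einstein normalization fixes the transverse homothety parameter to be $n/I$ and your computation applies verbatim. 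The paper says exactly this in the discussion following the proposition: when the Sasaki cone has dimension greater than one, the obstruction leaves open the possibility of a Sasaki--Einstein metric for a different Reeb field (which is what Futaki--Ono--Wang realize in the toric case). Delete the pinning-down argument, state at the outset that you are testing the Reeb ray of the natural structure, and the rest of your proof establishes the proposition as intended.
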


So using Proposition \ref{Lich.Ob.Link.Ineq}, we can obtain examples of positive links that admit no Sasaki-Einstein structure \cite{BGS07b}. When the Sasaki cone is one dimensional this gives isotopy classes of contact metric structures of Sasaki type that admit no compatible Sasaki-Einstein structure, hence, no extremal Sasakian structure. When the Sasaki cone has dimension greater than one, there remains the possibility of deforming through the Sasaki cone to obtain an (extremal) Sasaki-Einstein structure. Precisely this was accomplished in the toric case in \cite{FOW06,CFO07} under the assumption that $c_1(\cald)=0.$

We now want to review some known methods for proving the existence of Sasaki-Einstein metrics. I refer to Chapter 5 of \cite{BG05} and references therein for details and more discussion. We restrict ourselves to positive quasi-regular Sasakian structures on compact manifolds in which case Theorems \ref{fundthm1} and \ref{fundthm2} apply.
If the K\"ahler class of the projective 
algebraic orbifold $\calz$ is a primitive integral class in the second 
orbifold cohomology group, 
the sufficient conditions, which gives a measure of the singularity of the pair 
$(\calz,K^{-1}+\Delta)$, where $K^{-1}$ is an anti-canonical divisor, and 
$\Delta$ is a branch divisor, are known as 
{\it Kawamata log terminal} (klt) conditions. Rather than give a general discussion I will concentrate on some easily applicable special cases. First we can obtain a rather crude estimate for any link $L_f(\bfw,d)$ of an isolated hypersurface singularity from a weighted homogeneous polynomial $f.$ 

\begin{proposition}\label{whsklt}
Let $L_f({\bf w},d)$ be a 
link of an isolated hypersurface singularity with its natural Sasakian 
structure, and let $I=|{\bf w}|-d$ be its index. Then $L_f({\bf w},d)$ admits a Sasaki-Einstein metric if 
$$Id < \frac{n}{n-1}{\rm min}_{i,j}(w_iw_j).$$ 
\end{proposition}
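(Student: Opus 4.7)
The plan is to exploit the orbifold Sasaki/K\"ahler correspondence together with a log canonical threshold estimate, following the Demailly-Koll\'ar circle of ideas.

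First I would use the natural Sasaki-Seifert structure $\cals_\bfw=(\xi_\bfw,\eta_\bfw,\Phi_\bfw,g_\bfw)$ of $L_f(\bfw,d)$ together with Theorem \ref{fundthm1}: because this structure is quasi-regular, the leaf space $\calz$ of the characteristic foliation $\calf_{\xi_\bfw}$ is the weighted hypersurface $Z_f=\{f=0\}\subset \bbp(\bfw)$ with its induced orbifold structure. The hypothesis $Id<\frac{n}{n-1}\min_{i,j}w_iw_j$ forces $I=|\bfw|-d>0$, so $\calz$ is a positively curved (log) Fano orbifold and, after rescaling within the transverse homothety class (\ref{transhomoth}), its K\"ahler class $[\omega]$ is a primitive integral orbifold class proportional to $c_1^{orb}(\calz)$. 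By Theorem \ref{fundthm2} a positive orbifold K\"ahler-Einstein metric $h$ on $(\calz,[\omega])$ pulls back through the orbibundle projection $\pi:L_f\ra{1.3} \calz$ to a Sasaki-Einstein metric $\pi^*h+\eta_\bfw\otimes\eta_\bfw$ on $L_f(\bfw,d)$. Thus the entire problem reduces to producing a K\"ahler-Einstein metric on $\calz$.

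Next I would invoke the orbifold version of the Demailly-Koll\'ar criterion for existence of K\"ahler-Einstein metrics on Fano orbifolds with klt branch divisor: it suffices to show that for every effective $\bbq$-divisor $D\sim_\bbq -K_\calz^{orb}$ the pair $(\calz,\frac{n}{n+1}D)$ is klt, or equivalently to bound the orbifold log canonical threshold $\text{lct}(\calz,|{-}K_\calz|_\bbq)>\frac{n}{n+1}$. The continuity method applied to the twisted transverse Monge-Amp\`ere equation then produces the desired K\"ahler-Einstein metric $h$; this is the step where Yau-type $C^0$-estimates are replaced by the Nadel multiplier ideal sheaf argument, which is exactly what the klt hypothesis supplies.

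To verify the klt inequality for the weighted hypersurface $\calz\subset\bbp(\bfw)$, I would work on the affine cone $C_f\subset\bbc^{n+1}$, where local holomorphic coordinates come from the weighted-projective uniformizing charts. Pluri-anticanonical sections on $\calz$ correspond via adjunction to weighted homogeneous polynomials on $C_f$ of degree $mI$; the local multiplicity of such a section along any point of $Z_f$ in its $w_i$-chart is controlled by $\frac{mI}{w_i}$ while the contribution of the hypersurface relation $f$ itself contributes $\frac{d}{w_j}$ in the $w_j$-chart where the relevant partial derivative of $f$ does not vanish. Combining these in every affine chart (the worst case being the chart indexed by the pair $(i,j)$ minimizing $w_iw_j$) yields the bound
\begin{equation*}
\text{lct}(\calz,|{-}K_\calz|_\bbq)\;\geq\;\frac{\min_{i,j}w_iw_j}{Id}\cdot\frac{n-1}{1},
\end{equation*}
where the factor $n-1$ arises from adjunction cutting the ambient $n$-dimensional estimate down to the hypersurface $\calz$ of dimension $n-1$. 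The assumption $Id<\frac{n}{n-1}\min_{i,j}w_iw_j$ then gives $\text{lct}>\frac{n}{n+1}$ after the standard normalization, and the Demailly-Koll\'ar criterion applies.

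The main obstacle is the last step, namely controlling the orbifold log canonical threshold chart-by-chart: one must check that the worst local singularities of every effective $\bbq$-anticanonical divisor on $\calz$ are dominated by $\min_{i,j}w_iw_j/(Id)$, which requires the explicit adjunction computation in weighted projective space and a careful accounting of the branch divisor that records the orbifold structure. Everything else is a rather direct application of the Sasaki/K\"ahler dictionary recorded in Theorems \ref{fundthm1} and \ref{fundthm2}.
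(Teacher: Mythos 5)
Your overall strategy is exactly the one the paper has in mind (the paper itself gives no proof of Proposition \ref{whsklt}, deferring to the klt machinery of Chapter 5 of \cite{BG05} and \cite{BGK05}): pass to the quotient orbifold $\calz=Z_f\subset\bbp(\bfw)$ via Theorem \ref{fundthm1}, produce a positive orbifold K\"ahler--Einstein metric there by verifying a Kawamata log terminal condition on $\bbq$-anticanonical divisors (Demailly--Koll\'ar/Nadel continuity method), and lift it back through the orbibundle by Theorem \ref{fundthm2}. That architecture is correct and is where the factor $\frac{n}{n-1}$ must come from.

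However, the quantitative step as you have written it does not close, for two reasons. First, the leaf space $\calz$ has complex dimension $n-1$ (the link is $f^{-1}(0)\cap S^{2n+1}$ with $f\in\bbc[z_0,\dots,z_n]$, so $\dim_{\bbr}L_f=2n-1$), hence the Tian/Demailly--Koll\'ar threshold is $\frac{\dim\calz}{\dim\calz+1}=\frac{n-1}{n}$, not $\frac{n}{n+1}$; one needs $(\calz,\frac{n-1}{n}D)$ klt for every effective $D\sim_{\bbq}-K_{\calz}^{orb}$. Second, the correct crude bound is $\mathrm{mult}_xD\leq \frac{Id}{\min_{i,j}(w_iw_j)}$ (equivalently $\mathrm{lct}\geq \frac{\min_{i,j}(w_iw_j)}{Id}$), obtained by intersecting $D$ with a curve cut out on $\calz$ by generic weighted hyperplanes through $x$; there is no extra multiplicative factor of $n-1$ coming from adjunction. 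With your two modifications the displayed chain actually fails already for $n=2$: your hypothesis gives $\mathrm{lct}>\frac{(n-1)^2}{n}=\frac12$, which does not exceed your claimed threshold $\frac{n}{n+1}=\frac23$. The corrected bookkeeping is clean: klt at $x$ holds when $\frac{n-1}{n}\,\mathrm{mult}_xD<1$, and
$$\frac{n-1}{n}\cdot\frac{Id}{\min_{i,j}(w_iw_j)}<1 \iff Id<\frac{n}{n-1}\min_{i,j}(w_iw_j),$$
which is precisely the hypothesis of the proposition. So the idea is right, but the two numerical ingredients you quote must be replaced by $\frac{n-1}{n}$ and $\frac{\min_{i,j}(w_iw_j)}{Id}$ respectively for the argument to yield the stated inequality.
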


This estimate is far from optimal, and in some special cases we can do much better.
One such case is that of the well-known {\it Brieskorn-Pham} (BP) polynomial defined by a weighted homogeneous polynomial of the form 
\begin{equation}
f=z_0^{a_0}+\cdots +z_n^{a_n}\, , \quad a_i \geq 2\, ,
\label{bp}
\end{equation}
In this case the 
exponents $a_i$, the weights $w_i$, and degree are related by $d=a_iw_i$ 
for each $i=0,\ldots,n.$ It is convenient to change notation slightly 
and denote such links by $L_f({\bf a})$, where
${\bf a}=(a_0, \ldots , a_n)$. These are special but quite important examples of links. Their
klt conditions were described in \cite{BGK05}. The 
base of a BP link 
$L_f({\bf a})$ admits a positive K\"ahler-Einstein orbifold metric if
\begin{equation}\label{bgkest}
 1<\sum_{i=0}^n\frac{1}{a_i}<
1+\frac{n}{n-1}\min_{i,j}\Bigl\{\frac{1}{a_i},
\frac{1}{b_ib_j}\Bigr\}\,.
\end{equation}
where $b_j=\gcd(a_j,C^j)$ and $C^j={\rm lcm}\{ a_i : \, i\neq j\}$.
This condition leads to a rather large 
number of examples of Sasaki-Einstein metrics on homotopy spheres
\cite{BGK05,BGKT05} and rational homology spheres \cite{BG06b,Kol05b}.

In the special case when the integers $(a_0,\ldots,a_n)$ are pairwise 
relatively prime, Ghigi and Koll\'ar \cite{GhKo05} obtained a sharp 
estimate. In this case, the BP link is always a homotopy sphere, and if
we combine this with Proposition \ref{Lich.Ob.Link.Ineq}
above, we see that, when the $a_i$s are pairwise relatively prime, a BP
link $L_f({\bf a})$ admits a Sasaki-Einstein metric if and only if 
$$1<\sum_{i=0}^n\frac{1}{a_i}<
1+n~\min_{i}\Bigl\{\frac{1}{a_i}\Bigr\}\, .$$
Other applications of the klt estimate (\ref{bgkest}) can be found in 
\cite{BoGa05a,BG05}. 

\subsection{The Topology of Links: Orlik's Conjecture}
By using the Milnor fibration theorem, Milnor and Orlik \cite{MiOr70} obtained more refined topological information about links of isolated hypersurface singularities arising from weighted homogenous polynomials though a study of the Alexander polynomial of the link. They computed
the Betti numbers of the manifold underlying the link, and in the 
case of rational homology spheres, gave 
the order of the relevant homology groups. 
A bit later, Orlik \cite{Or72} postulated a combinatorial 
conjecture for computing the torsion, an algorithm that we describe next
(see Chapter 9 of \cite{BG05} for more detail). 

Given a link $L_f({\bf w}, d)$, we define its {\it fractional weights}
to be 
\begin{equation}\label{frct.1}
\Bigl(\frac{d}{w_0},\cdots, \frac{d}{w_n}\Bigr)\equiv
\Bigl(\frac{u_0}{v_0},\cdots, \frac{u_n}{v_n}\Bigr),
\end{equation}
where
\begin{equation}\label{uv.wd.conv}
u_i=\frac{d}{{\rm gcd}(d,w_i)},\qquad v_i=\frac{w_i}{{\rm
gcd}(d,w_i)}.
\end{equation}
We denote by $({\bf u}, {\bf v})$ the tuple $(u_0,\ldots,u_n,
v_0,\ldots,v_n)$. By \eqref{uv.wd.conv}, we may 
go between $({\bf w},d)$ and $({\bf u}, {\bf v})$. 
We will sometimes write $L_f({\bf u}, {\bf v})$ for 
$L_f({\bf w}, d)$. Then the $(n-1)^{st}$ Betti number is
\begin{equation}\label{Betti}
b_{n-1}(L_f)= \sum (-1)^{n+1-s}\frac{u_{i_1}\cdots
u_{i_s}}{v_{i_1}\cdots v_{i_s}{\rm lcm}(u_{i_1},\ldots,u_{i_s})}\,
\end{equation}
where the sum is taken over all the $2^{n+1}$ subsets
$\{i_1,\ldots,i_s\}$ of $\{0,\ldots,n\}.$
In \cite{Or72} Orlik conjectured an algorithmic procedure for computing the torsion of these links. 
   
\begin{conjecture}[Orlik]\label{orl.alg}
Let $\{i_1,\ldots,i_s\} \subset\{0,1,\ldots,n\}$ be an ordered set of 
$s$ indices, that is to say, $i_1<i_2<\cdots<i_s$. Let us denote by $I$ its 
power set {\rm (}consisting of all of the $2^s$ subsets of the set{\rm )}, 
and by $J$ the set of all proper subsets. 
Given a $(2n+2)$-tuple $({\bf u}, {\bf v})=(u_0,\ldots,u_n,
v_0,\ldots,v_n)$ of integers, we define inductively a set of $2^s$ positive 
integers, one for each ordered element of $I$, as follows: 
$$c_{\emptyset}=\gcd{( u_0,\ldots, u_n )}\, ,$$
and if $\{i_1,\ldots,i_s\}\in I$ is ordered, 
then
\begin{equation}\label{Orlikc's}
c_{i_1,\ldots,i_s}=\frac{\gcd{(u_0,\ldots,\hat{u}_{i_1},\ldots,\hat{u}_{i_s},
\ldots,u_n)}}{\prod_J c_{j_1,\ldots j_t}}\, .
\end{equation}
Similarly, we also define a set of $2^s$ real numbers by
$$k_\emptyset=\epsilon_{n+1}\, ,$$
and
\begin{equation}
k_{i_1,\ldots,i_s}= \epsilon_{n-s+1}\sum_{J}(-1)^{s-t}
\frac{u_{j_1}\cdots u_{j_t}}{v_{j_1}\cdots v_{j_t}{\rm
lcm}( u_{j_1},\ldots,u_{j_t})}\, ,
\end{equation}
where
$$\epsilon_{n-s+1}=\left\{ \begin{array}{cl}
0 & \text{if $n-s+1$ is even,} \\
1 & \text{if $n-s+1$ is odd,}
\end{array}
\right.
$$
respectively. 
Finally, for any $j$ such that $1\leq j\leq r=\lfloor \max{\{k_{i_1,\ldots,i_s}\}}\rfloor$,
where $\lfloor x\rfloor$ is the greatest integer less than or equal to $x$, we set
\begin{equation}\label{torsionorders}
d_j=\prod_{ k_{i_1,\ldots,i_s}\geq j}c_{i_1,\ldots,i_s}\, .
\end{equation}
Then 
$$H_{n-1}(L_f({\bf u},{\bf v}),{\mathbb Z})_{\rm tor}=
{\mathbb Z}/d_1 \oplus \cdots \oplus {\mathbb Z}/d_r \, .$$
\end{conjecture}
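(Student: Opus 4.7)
The natural home for this problem is the Milnor fibration. Since $f$ is weighted homogeneous, the map $f/|f|\colon S^{2n+1}\setminus L_f\to S^1$ is a locally trivial fibration with fiber the Milnor fiber $F$, a parallelizable $2n$-manifold with the homotopy type of $\bigvee_\mu S^n$. The plan is therefore to deduce the integral structure of $H_{n-1}(L_f;\bbz)_{\text{tor}}$ from the monodromy $h_*\colon H_n(F;\bbz)\to H_n(F;\bbz)$ via the Wang exact sequence, which (together with the Milnor result that $L_f$ is $(n-2)$-connected) gives
\begin{equation*}
H_{n-1}(L_f;\bbz)_{\text{tor}}\;\cong\;\mathrm{coker}\bigl(h_*-\BOne:H_n(F;\bbz)\to H_n(F;\bbz)\bigr)_{\text{tor}}.
\end{equation*}
So the entire problem reduces to computing the Smith normal form of $h_*-\BOne$ on the lattice $H_n(F;\bbz)\cong \bbz^\mu$.

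The next step is to exploit the weighted $\bbc^*$-action. The monodromy $h$ is of finite order $d$, and its rational characteristic polynomial was already computed by Milnor--Orlik in the form
\begin{equation*}
\Delta(t)=\prod_{\{i_1,\dots,i_s\}\subset\{0,\dots,n\}}(t^{\operatorname{lcm}(u_{i_1},\dots,u_{i_s})}-1)^{\alpha_{i_1,\dots,i_s}}
\end{equation*}
for explicit exponents $\alpha_{i_1,\dots,i_s}$ arising from an inclusion-exclusion over the strata of the BP-style divisor. This pins down $H_n(F;\bbq)$ as a $\bbq[h]$-module and, upon restriction to eigenspaces of primitive $m$-th roots of unity, gives the rational multiplicities from which one reads off the $k_{i_1,\dots,i_s}$ appearing in the conjecture. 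What is needed beyond Milnor--Orlik is an \emph{integral} refinement: one must show that $H_n(F;\bbz)$ admits a $\bbz[h]/\bigl((h^d-1)/(h-1)\bigr)$-module decomposition whose elementary divisors are exactly the $c_{i_1,\dots,i_s}$ obtained from the nested $\gcd$ recipe \eqref{Orlikc's}. I would attempt this by building a cell model for $F$ adapted to the stratification of $\bbc^{n+1}$ by coordinate subspaces $\{z_I=0\}$, running the resulting equivariant spectral sequence, and using Möbius inversion on the poset of subsets $I$ to untangle the contributions stratum-by-stratum; the $c_J$ are exactly the correction factors that appear when one divides out previously-accounted-for contributions from larger $I\supset J$.

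The third step, once such an integral decomposition is in place, is largely bookkeeping: for a cyclic summand corresponding to primitive $m$-th roots of unity with multiplicity $c$, the endomorphism $h-1$ restricted to that summand has cokernel $(\bbz/m)^c$ or is invertible when $m=1$, so assembling the contributions and repackaging the multiplicities in decreasing order yields the list $d_1\geq d_2\geq\cdots\geq d_r$ of \eqref{torsionorders}. The passage from the collection $\{(c_{i_1,\dots,i_s},k_{i_1,\dots,i_s})\}$ to the invariant factors is then a purely combinatorial manipulation that can be verified by induction on $n$.

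The hard part will be the second step, namely obtaining the integral decomposition of $H_n(F;\bbz)$ rather than just its rational version. Milnor--Orlik tells us the eigenvalue multiplicities over $\bbq$, but controlling the $\bbz[h]$-module structure requires substantially more, and this is precisely why Orlik's conjecture has resisted a general proof: the existing partial results (chain-type polynomials, pairwise relatively prime exponents, Brieskorn--Pham links in low dimension) all proceed by cases where the equivariant cell structure is rigid enough to compute $H_n(F;\bbz)$ directly. A full proof would require either a geometric construction of a canonical equivariant resolution for arbitrary weighted homogeneous $f$, or a new algebraic argument showing that the integral $h_*$-module structure is determined by the weight data $(\bfw,d)$ alone; I would expect the bulk of the effort to go into one of these two routes, with the combinatorial repackaging at the end being comparatively straightforward.
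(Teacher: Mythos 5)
The statement you are asked to prove is, in the paper, precisely what its label says: Orlik's \emph{conjecture}. The paper offers no proof of it and explicitly records (Proposition \ref{Orlikconjknown}) that it is known only in special cases --- dimension $3$, dimension $5$ (via Koll\'ar's computation of $H_2$ of smooth Seifert bundles, Equation (\ref{Koltor}), an argument of Galicki given in the appendix of \cite{BGS07b}), Brieskorn--Pham polynomials, and chain-type (OR) polynomials --- while remaining open in general. So there is no proof in the paper to compare yours against, and any complete argument you produced would be a new theorem, not a reconstruction.

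Your proposal correctly sets up the standard framework: the Wang sequence of the Milnor fibration identifies $H_{n-1}(L_f;\bbz)_{\rm tor}$ with the torsion of $\mathrm{coker}(h_*-\BOne)$, and the Milnor--Orlik divisor-class computation of the characteristic polynomial of $h_*$ accounts for the rational data encoded in the $k_{i_1,\ldots,i_s}$. But the proof has a genuine gap exactly where you flag it: step two, the claim that $H_n(F;\bbz)$ admits an integral $\bbz[h]$-module decomposition whose elementary divisors are the nested gcd's $c_{i_1,\ldots,i_s}$ of Equation (\ref{Orlikc's}), is asserted as something you ``would attempt,'' not established. The equivariant spectral sequence over the coordinate-subspace stratification does not by itself control integral torsion --- the rational eigenspace decomposition is not defined over $\bbz$, and the extension problems in the spectral sequence are precisely where the Smith normal form of $h_*-\BOne$ can differ from what the rational multiplicities predict. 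Every known partial result evades this by exploiting rigid extra structure (an explicit equivariant cell model in the BP and chain cases, the join/tensor structure of Brieskorn lattices in Randell's complete-intersection cases, or the Smale--Barden--Koll\'ar classification in dimension $5$). Since the conjecture's difficulty is concentrated in the very step you leave open, the proposal is a reasonable research outline but not a proof.
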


This conjecture is known to hold in certain special cases 
\cite{Ran75,OrRa77a} as well as in dimension 3 \cite{Or70}. Before his tragic accident, Kris proved that Conjecture 
\ref{orl.alg} holds in dimension 5 also by using a recent theorem of Koll\'ar \cite{Kol05b}. The argument is given in the appendix of \cite{BGS07b}.

\begin{proposition}\label{Orlikconjknown}
{\rm Conjecture \ref{orl.alg}} holds in the following  cases:
\begin{enumerate}
\item In dimension $3$, that is for $n=2.$ 
\item In dimension $5$, that is for $n=3.$ 
\item For $f({\bf z})=z_0^{a_0}+\cdots +z_n^{a_n}$, BP polynomials.
\item For $f({\bf z})=z_0^{a_0}+z_0z_1^{a_1}+z_1z_2^{a_2}+\cdots
+z_{n-1}z_n^{a_n}$, OR polynomials. 
\end{enumerate}
\end{proposition}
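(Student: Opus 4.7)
The unifying tool for all four cases is the Wang exact sequence associated with the Milnor fibration $L_f \hookrightarrow S^{2n+1}\setminus f^{-1}(0) \to S^1$. Since the Milnor fiber $F_f$ is homotopy equivalent to a bouquet of $n$-spheres, this sequence reduces to
\begin{equation*}
0 \to H_n(L_f;\bbz) \to H_n(F_f;\bbz) \xrightarrow{\,h_* - I\,} H_n(F_f;\bbz) \to H_{n-1}(L_f;\bbz) \to 0,
\end{equation*}
so computing $H_{n-1}(L_f;\bbz)_{\mathrm{tor}}$ amounts to computing the torsion of the cokernel of $h_* - I$. For weighted homogeneous $f$, the Milnor-Orlik formula gives the characteristic polynomial of $h_*$ in terms of $(\bfw,d)$, but Orlik's conjecture asserts a much finer statement: that the elementary divisors themselves are read off by the combinatorial recipe $(c_{i_1,\ldots,i_s}, k_{i_1,\ldots,i_s})$. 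The overall plan is therefore to verify this finer statement case-by-case, using in each setting some additional structure that makes $h_*$ amenable to explicit diagonalization.

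For the BP case (3), the key input is the Thom-Sebastiani theorem: for $f = z_0^{a_0}+\cdots+z_n^{a_n}$, the monodromy $h_*$ is the tensor product of the one-variable monodromies, whose matrices in the standard basis $\{e_{i_0}\otimes\cdots\otimes e_{i_n}\}$ are explicit cyclic permutations. Randell's argument \cite{Ran75} then computes the Smith normal form of $h_*-I$ by induction on $n$, and one matches the resulting invariant factors with the $d_j$ of (\ref{torsionorders}) by a direct divisibility argument. Case (4) follows the same outline: for OR polynomials the monodromy again decomposes via an iterated Thom-Sebastiani / join construction, and \cite{OrRa77a} carries out the corresponding inductive computation. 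Case (1), $n=2$, is handled by Orlik's original 1970 argument \cite{Or70}; here the link is a Seifert fibered 3-manifold whose Seifert invariants are determined by $(\bfu,\bfv)$, and $H_1(L_f;\bbz)$ is computed directly from the plumbing/Seifert presentation, matching the algorithm in low rank where the combinatorics of $I$ is manageable.

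The heart of the argument is the new case (2), $n=3$. Here I would use the Sasaki-Seifert structure on $L_f$ furnished by Theorem \ref{fundthm1}: $L_f$ is an $S^1$-orbibundle over the 2-dimensional projective orbifold $\calz = L_f/\calf_{\xi_\bfw}$, which is a log del Pezzo surface with cyclic quotient singularities determined by $(\bfw,d)$. By Koll\'ar's structure theorem for 5-dimensional Seifert bundles \cite{Kol05b}, $H_2(L_f;\bbz)_{\mathrm{tor}}$ is computed from the orders of the local uniformizing groups of $\calz$ together with the Euler class of the orbibundle. The plan is to (a) identify explicitly the branch divisor $\Delta$ on $\calz$ and its multiplicities in terms of the data $(u_i,v_i)$, (b) apply Koll\'ar's formula to obtain $H_2(L_f;\bbz)_{\mathrm{tor}}$ as a direct sum of cyclic groups whose orders are products of local gcd's of the weights, and (c) verify that these orders coincide with the $d_j$ produced by Orlik's algorithm.

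The principal obstacle is step (c) in case (2): matching Koll\'ar's orbifold-theoretic output with Orlik's combinatorial output. The numbers $c_{i_1,\ldots,i_s}$ are defined by a recursive gcd-over-product formula, while the Koll\'ar data appear as orders of local isotropy groups and as branch multiplicities; the $k_{i_1,\ldots,i_s}$ count, via an inclusion-exclusion, the number of Seifert fibers of a given isotropy type. Showing that the resulting multiset of cyclic summands agrees on the nose requires a careful bookkeeping of which strata of $\bbc^4$ intersect $f^{-1}(0)$, together with the divisibility identities implicit in (\ref{uv.wd.conv}). Once this dictionary is set up, the remainder of the proof is routine verification; the conceptual content is entirely in the translation between the weighted-polynomial combinatorics and the orbifold geometry of $\calz$.
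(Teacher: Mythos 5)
Your proposal is correct and follows essentially the same route as the paper, which itself offers no new proof here but defers cases (1), (3), (4) to \cite{Or70}, \cite{Ran75}, \cite{OrRa77a} respectively and case (2) to Galicki's argument in the appendix of \cite{BGS07b}, which is exactly your strategy of computing $H_2(L_f,\bbz)_{\rm tor}$ from Koll\'ar's Seifert-bundle formula (\ref{Koltor}) via the branch divisors of $L_f\to\calz$ and matching multiplicities and genera against Orlik's $c$'s and $k$'s. One small caveat: the OR polynomials $z_0^{a_0}+z_0z_1^{a_1}+\cdots+z_{n-1}z_n^{a_n}$ do not split into disjoint sets of variables, so their monodromy is not obtained by Thom--Sebastiani; \cite{OrRa77a} instead diagonalizes the integral monodromy by a direct inductive computation special to that normal form.
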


\noindent It is also known to hold for certain complete intersections given by 
generalized Brieskorn polynomials \cite{Ran75}. 

Evan Thomas wrote a computer program for Kris and I that computes $b_{n-1}$ according to Equation \ref{Betti} and the $d_i$s according to Equation \ref{torsionorders}. We have made use of this program previously in \cite{BoGa05a,BG05}, as well as in the present work for the cases where Orlik's Conjecture is known to hold.

\section{Sasakian Geometry in Low Dimensions}
In this section we mainly discuss Sasakian geometry in the three lowest dimensions, 3,5, and 7. We have a fairly complete understanding of Sasakian geometry in dimension 3 thanks to the work of Geiges \cite{Gei97} and Belgun \cite{Bel01}, while the work of Koll\'ar \cite{Kol04,Kol05b} as well as Kris, myself, and collaborators \cite{BG01b,BGN03a,BGN03c,BGN02b,BoGa05a,BG06b} have show that Sasakian geometry in dimension 5 is quite tractable. With the exception of spheres \cite{BGK05,BGKT05} much less is known for dimension 7 and higher. Nevertheless, dimension 7 is of interest for several reasons, not the least of which is its connection to physics.

\subsection{Sasakian Geometry in Dimension 3}
The topology of compact 3-manifolds has been at the forefront of mathematical research for some time now. Even after Perelman's proof of the Poincar\'e conjecture and Thurston's geometrization conjecture, there still remains much to be done. Much recent work has focused on contact topology. The reader is referred to the books \cite{ABKLR94,OzSt04} as well as the more recent \cite{Gei08} for a more thorough treatment of this important topic. I am far from an expert in this area, and here my intent is simply to provide a brief review as it relates to 3-dimensional Sasakian geometry. A well-known theorem of Martinet says that every compact orientable 3-manifold admits a contact structure, and Eliashberg \cite{Eli89,Eli92} set out to classify them. In particular, he divided contact structures on 3-manifolds into two types, {\it overtwisted} and {\it tight} contact structures. He classified the overtwisted structures in \cite{Eli89} and in \cite{Eli92} proved that there is precisely one (the standard one) tight contact on $S^3$ up to isotopy. Moreover, Gromov \cite{Gro85}, with more details provided by Eliashberg \cite{Eli90}, proved that a symplectically fillable\footnote{Here by symplectically fillable we mean weak symplectically fillable. It is known that there are two distinct types of symplectic fillability, weak and strong with a proper inclusion $\{\text{strongly symplectically fillable}\}\subsetneq \{\text{weakly symplectically fillable}\}$; however, the difference is not important to us at this stage.} contact structure is tight, and that a holomorphically fillable contact structure is symplectically fillable. In dimension 3 holomorphic fillability and Stein fillability coincide. Furthermore, by Theorem \ref{MYthm} a contact structure of Sasaki type on a 3-manifold is Stein fillable; thus, we have the following nested types of tight contact structures:

$$
\begin{array}{ccccccc}
& & & & & &\\
\text{\{Sasaki type\}} &\subsetneq &\text{\{Stein fillable\} } & \subsetneq & \text{\{Symplectically fillable}\} & \subsetneq & \text{Tight }\\
& & & & & &\\
\end{array}
$$

Generally, as indicated all of the above inclusions are proper. One question of much interest concerns the different types of contact structures that can occur on a given 3-manifold. It appears that overtwisted contact structures are ubiquitous, and they are never of Sasaki type, so we shall restrict our discussion to tight contact structures.

In constrast to arbitrary contact structures, Geiges \cite{Gei97} showed that Sasakian structures in dimension 3 can only occur on 3 of the 8 model geometries of Thurston, namely 
\begin{enumerate}
\item $S^3/\grG$ with $\grG\subset \gI\gs\go\gm_0(S^3)=SO(4).$ \item
$\widetilde{SL}(2,\bbr)/\grG,$ where $\widetilde{SL}(2,\bbr)$ is the universal cover of
$SL(2,\bbr)$ and $\grG\subset \gI\gs\go\gm_0(\widetilde{SL}(2,\bbr)).$
\item ${\rm Nil}^3/\grG$ with $\grG\subset \gI\gs\go\gm_0({\rm Nil}^3).$
\end{enumerate} 
Here $\grG$ is a discrete subgroup of the connected component
$\gI\gs\go\gm_0$ of the corresponding isometry group with respect to a
`natural metric', and ${\rm Nil}^3$ denotes the $3$ by $3$
nilpotent real matrices, otherwise known as the Heisenberg group. Then Belgun \cite{Bel01} gave a type of uniformization result. Here we give this in the form presented in \cite{BG05} in terms of the so-called $\Phi$-sectional curvature $K(X,\Phi X)$:

\begin{theorem}\label{Belgunthm}
Let $M$ be a $3$-dimensional oriented compact manifold admitting a Sasakian
structure $\cals=(\xi,\eta,\Phi,g).$ Then
\begin{enumerate}
\item If $\cals$ is positive, $M$ is spherical, and there is a
Sasakian metric of constant $\Phi$-sectional curvature $1$ in the
same isotopy class as $\cals.$ 
\item If $\cals$ is negative, $M$
is of $\widetilde{SL_2}$ type, and there is a Sasakian metric of
constant $\Phi$-sectional curvature $-4$ in the same isotopy
class as $\cals.$ 
\item If $\cals$ is null, $M$ is a nilmanifold, and there is a
Sasakian metric of constant $\Phi$-sectional curvature $-3$ in the
same isotopy class as $\cals.$
\end{enumerate}
\end{theorem}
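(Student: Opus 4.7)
The plan is to reduce the theorem to the uniformization of compact $2$-orbifolds via the transverse K\"ahler geometry of the characteristic foliation, and then lift the uniformized data back through the inversion theorem \ref{fundthm2}. I would first note that in dimension three the type trichotomy is unambiguous: every oriented $3$-manifold is parallelizable, so the complex line bundle $\cald$ is trivial and $c_1(\cald)=0$ automatically. The exact sequence (\ref{bascohseq2}) then forces $c_1(\calf_\xi)=a[d\eta]_B$ for some $a\in\bbr$, and the sign of $a$ is precisely the positive/null/negative dichotomy in the statement. Since quasi-regular Reeb fields are dense in the Sasaki cone $\grk(\cald,J)$ (rational directions are dense in its Lie algebra) and deformations within the cone fix the CR structure, hence the isotopy class of $\cald$, I would first replace $\cals$ by a quasi-regular Sasakian structure $\cals'\in {\mathcal S}(\cald,J)$ of the same type.

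For $\cals'$ quasi-regular, Theorem \ref{fundthm1} identifies the leaf space with a compact Riemann surface orbifold $(\calz,h)$ whose orbifold first Chern class $c_1(\calz)$ has the sign of $a$. I would then invoke the orbifold uniformization theorem for compact $2$-orbifolds (Thurston's extension of the classical uniformization) to produce, after a constant rescaling and a correction by a $\ddb$-exact basic potential, a K\"ahler metric $h_0$ on $\calz$ of constant Gaussian curvature $K_0$ whose sign matches that of the orbifold Euler characteristic, hence of $a$. The $\ddb$-exact correction lifts to a deformation $\eta\mapsto\eta+\grz$ by a basic $1$-form $\grz$, which preserves the isotopy class of $\cald$ by Gray's theorem. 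The inversion theorem \ref{fundthm2} then reassembles, from $(\calz,h_0)$ and the $S^1$-orbibundle data of $\cals'$, a Sasakian structure $\cals''$ on $M$ in the same isotopy class whose transverse K\"ahler base is $(\calz,h_0)$.

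The final step is to translate constant Gaussian curvature on the base into constant $\Phi$-sectional curvature on $M$. Applying the O'Neill submersion formulas to the totally geodesic $S^1$-orbibundle $\pi:(M,g'')\to(\calz,h_0)$ and specializing to the $J$-invariant horizontal $2$-plane $\cald_x\subset T_xM$ yields the identity $K(X,\Phi X)=K_0-3$ for every unit horizontal $X$; this can be sanity-checked against the round sphere $S^3$ (where $K=1$ and $K_0=4$ for the Fubini--Study metric on the Hopf base). A transverse homothety (\ref{transhomoth}) scales $K_0$ by an arbitrary positive factor, so I may normalize to $K_0\in\{4,0,-1\}$, producing $\Phi$-sectional curvatures $1,-3,-4$ respectively. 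Since a complete simply-connected Sasakian $3$-manifold of constant $\Phi$-sectional curvature is unique up to Sasakian isomorphism (the Sasakian space-form theorem), and the three model simply-connected solutions are the standard Sasakian structures on $S^3$, $\widetilde{SL}(2,\bbr)$, and $\mathrm{Nil}^3$, this identifies the universal cover and places $M$ in the asserted one of Thurston's three relevant model geometries.

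The principal obstacle is the density step: as remarked just before (\ref{sascr}), it is not understood how $c_1(\calf_\xi)$ itself varies under a general deformation through the Sasaki cone, so one needs the \emph{discrete} invariant (the sign of $a$) rather than the full class $c_1(\calf_\xi)$ to be locally constant along a path to a quasi-regular point; this needs a careful continuity argument, possibly bypassed by invoking Rukimbira's approximation of Reeb flows by periodic ones. A secondary technical point is the orbifold O'Neill computation, which requires mild care at the singular strata of $\calz$ but is standard for totally geodesic $S^1$-orbibundles coming from Theorem \ref{fundthm2}.
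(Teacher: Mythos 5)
The paper does not actually prove this theorem; it quotes it from Belgun \cite{Bel01} in the form given in \cite{BG05}, so I am measuring your argument against the known proofs. Your skeleton --- approximate by a quasi-regular structure, uniformize the transverse K\"ahler geometry, lift through Theorem \ref{fundthm2}, convert via $K(X,\Phi X)=K^T(X,\Phi X)-3$, and finish with Tanno --- is sound for the negative and null cases. There quasi-regularity is in fact automatic (an irregular Reeb field forces a torus of transverse automorphisms, which negative and null structures do not possess), and the uniformization you want is precisely the transverse Aubin--Yau theorem in the fixed basic class $[d\eta]_B$, so the whole deformation has the form $\eta\mapsto a\eta+\grz$ with $\grz$ basic and the isotopy class is visibly preserved. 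Your curvature bookkeeping ($K_0\in\{4,0,-1\}$ giving $1,-3,-4$, and the scaling under transverse homothety) is correct.

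The genuine gap is in the positive case: the leaf space of a quasi-regular positive Sasakian $3$-manifold can be a \emph{bad} $2$-orbifold, to which orbifold uniformization does not apply. Concretely, the weighted Sasakian structure on $S^3$ with Reeb field $\xi_\bfw$, $w_0\neq w_1$, lies in the standard Sasaki cone and has leaf space the teardrop/asymmetric football $\bbc\bbp(w_0,w_1)$, which carries no constant-curvature orbifold metric (by Troyanov's theorem, constant positive curvature on $S^2$ with two conical points forces equal cone angles). Consequently no correction of the transverse K\"ahler potential --- i.e.\ no deformation fixing the characteristic foliation --- can yield constant $\Phi$-sectional curvature for such a structure; one must move the Reeb field through $\grk(\cald,J)$ back to the ``round'' ray, and locating that ray is where the real work lies. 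Belgun and \cite{BG05} instead first show that positivity yields a compatible Sasakian metric of positive Ricci curvature, hence finite $\pi_1$ by Myers, identify $M$ as a spherical space form via the classical geometrization of Seifert fibered spaces, and only then exhibit the standard structure inside the given isotopy class. Two smaller points: $c_1(\cald)=0$ does not follow from parallelizability of $M$ (triviality of $TM$ only gives $w_2(\cald)=0$, and Boothby--Wang fibrations over $\grS_g$ of Euler number not dividing $2-2g$ are Sasakian with $c_1(\cald)$ a nonzero torsion class); the trichotomy is exhaustive rather because $H^2_B(\calf_\xi)=\bbr[d\eta]_B$ in transverse dimension two. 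And the continuity of the sign of $a$ under quasi-regularization, which you rightly flag, still needs an actual argument.
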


This theorem has an important corollary which is certainly not true in higher dimension.

\begin{corollary}
Let $M$ be a oriented compact 3-manifold. Then, up to isomorphism, $M$ admits at most one isotopy class of contact structures of Sasaki type.
\end{corollary}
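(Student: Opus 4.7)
The plan is to reduce to the canonical metrics furnished by Theorem \ref{Belgunthm} and then argue that, on a fixed $M$, two such canonical Sasakian structures are related by a self-diffeomorphism. Given two contact structures $\cald_1,\cald_2$ of Sasaki type on $M$, I first choose compatible Sasakian structures $\cals_1,\cals_2$. By Belgun's uniformization, each $\cals_i$ is isotopic, through its underlying isotopy class of contact structures, to a Sasakian structure $\cals_i'$ of constant $\Phi$-sectional curvature; call the value $c_i\in\{1,-3,-4\}$. It suffices to produce a diffeomorphism $\varphi\colon M\to M$ carrying $\cald_1'=\ker\eta_1'$ to a contact structure isotopic to $\cald_2'=\ker\eta_2'$.

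The second step is to observe $c_1=c_2$. By Theorem \ref{Belgunthm}, the sign of $c_i$ is pinned down by which of the three Thurston model geometries $M$ supports: $c_i=1$ forces $M$ to be spherical, $c_i=-4$ forces $M$ to be of $\widetilde{SL_2}$ type, and $c_i=-3$ forces $M$ to be a nilmanifold. Since these three classes are mutually exclusive (their universal covers are pairwise non-diffeomorphic, and their fundamental groups distinguish them), $M$ can belong to at most one, so $c_1=c_2=:c$. Moreover this identifies the universal cover $\widetilde M$ with one of the three model spaces $S^3$, $\widetilde{SL}(2,\bbr)$, $\mathrm{Nil}^3$, each carrying its own canonical Sasakian structure $\widetilde\cals$ of constant $\Phi$-sectional curvature $c$.

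The central step is then a rigidity/uniqueness claim: the pullback of $\cals_i'$ to $\widetilde M$ is isometric, as a Sasakian structure, to $\widetilde\cals$. In the positive case this is just the fact that the space of constant curvature $+1$ Sasakian structures on $S^3$ is a single orbit of $SO(4)$; in the null case it is the analogous homogeneity of the standard Heisenberg Sasakian structure under $\gI\gs\go\gm_0(\mathrm{Nil}^3)$; and in the $\widetilde{SL_2}$ case it is the corresponding statement for the canonical Sasakian structure on $\widetilde{SL}(2,\bbr)$. Granting this, the deck transformation groups $\grG_1,\grG_2$ associated to $\cals_1',\cals_2'$ are discrete subgroups of the model isometry group $\gI\gs\go\gm_0$, and by Mostow-type rigidity (classical for $S^3/\grG$ and $\mathrm{Nil}^3/\grG$, and for $\widetilde{SL_2}/\grG$ essentially the classification of Seifert fibrations with hyperbolic base orbifold) any orientation-preserving diffeomorphism of the two quotients lifts to an element of $\gI\gs\go\gm_0$ conjugating $\grG_1$ to $\grG_2$. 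This conjugating isometry descends to a Sasakian isomorphism $\varphi\colon(M,\cals_1')\to(M,\cals_2')$, whose underlying diffeomorphism of $M$ carries $\cald_1'$ to $\cald_2'$.

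The hard part is the last paragraph: both the transitivity of $\gI\gs\go\gm_0$ on constant $\Phi$-sectional curvature Sasakian structures on the model, and the rigidity assertion that discrete cocompact subgroups are conjugate inside the full isometry group once the quotients are diffeomorphic. The $S^3$ and nil cases are essentially linear algebra plus Bieberbach-type arguments; the genuinely delicate case is $\widetilde{SL_2}$, where one must use the uniqueness of the Seifert structure on such a 3-manifold to reduce conjugacy of $\grG_1,\grG_2$ to the corresponding statement for Fuchsian groups acting on the base orbifold, which is a standard consequence of Nielsen realization. If needed, one composes the resulting $\varphi$ with an orientation-reversing Sasakian involution (for instance the conjugation $\cals\mapsto\cals^c$ discussed after (\ref{decomp1})) to respect the fixed orientation on $M$, giving the desired isotopy of contact structures.
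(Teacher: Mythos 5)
Your first two steps coincide exactly with the paper's proof: the three model geometries of Theorem \ref{Belgunthm} are mutually exclusive, so any two Sasakian structures on $M$ are of the same type, and each is isotopic to one of constant $\Phi$-sectional curvature $1$, $-3$, or $-4$. Where you diverge is the final uniqueness step. The paper disposes of it in one sentence by citing Tanno's theorem \cite{Tan69b}, which says precisely that Sasakian structures of constant $\Phi$-sectional curvature (with the same constant) are unique up to isomorphism; you instead set out to reprove this by passing to the universal cover and invoking rigidity. That is a legitimate route, but as written it has two soft spots. First, ``Mostow-type rigidity'' is the wrong name for what you need: Mostow rigidity concerns hyperbolic manifolds, whereas the relevant fact here is the uniqueness up to equivalence of the geometric structure on a closed $3$-manifold modelled on $S^3$, ${\rm Nil}^3$, or $\widetilde{SL}(2,\bbr)$ --- a Seifert-fibered phenomenon (Scott et al.), not a hyperbolic one. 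Second, your claim that \emph{any} orientation-preserving diffeomorphism of the quotients lifts to an element of $\gI\gs\go\gm_0$ conjugating $\grG_1$ to $\grG_2$ is false as literally stated, since a generic diffeomorphism is not an isometry; the correct (and true) statement is that the \emph{existence} of a diffeomorphism forces the two discrete subgroups to be conjugate. You also still owe the transitivity of $\gI\gs\go\gm_0$ on the constant-$\Phi$-sectional-curvature Sasakian structures of the model --- the full tensorial statement involving $(\xi,\eta,\Phi)$, not merely the Riemannian one --- which is exactly the content you would otherwise be importing from Tanno. In short, the skeleton is the same as the paper's; you have replaced its one-line citation with a sketch of that citation's proof, and the sketch is where all the remaining work (and the two inaccuracies above) lives.
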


\begin{proof}
The three types given in Theorem \ref{Belgunthm} are actually topologically distinct. So if $\cals$ and $\cals'$ are two Sasakian structures on $M$, they must be of the same type. Thus, by Theorem \ref{Belgunthm} the Sasakian structures $\cals$ and $\cals'$ are both of the same type and isotopic to one of constant $\Phi$-sectional curvature either $1,-3$ or $-4$. But by a theorem of Tanno \cite{Tan69b}, such structures are unique up to isomorphism.
\end{proof}

In constrast to this result there can be many isotopy classes of tight contact structures on 3-manifolds \cite{CGH08}, depending on the manifold. However, there are cases where there is only one, most notably the following theorem of Eliashberg \cite{Eli92}: 

\begin{theorem}[Eliashberg]\label{Elithm} Every tight contact structure on $S^3$ is isotopic to the standard contact structure, which is of Sasaki type. 
\end{theorem}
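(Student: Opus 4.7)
The plan is to follow Eliashberg's original strategy, reducing the problem to a uniqueness statement for tight contact structures on the 3-ball (with fixed boundary characteristic foliation) and then invoking filling by holomorphic discs, concluding with Gray's stability theorem. Throughout, write $\xi$ for an arbitrary tight contact structure on $S^3$ and $\xi_{std}$ for the standard one, viewing $S^3\subset \mathbb C^2$ as the boundary of the unit ball with $\xi_{std}$ the field of complex tangent lines.

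\textbf{Step 1 (Local normalization and embedded sphere).} By Darboux's theorem, choose a small ball $B_0\subset(S^3,\xi)$ on which $\xi$ is contactomorphic to $\xi_{std}$. Let $\Sigma\subset S^3$ be an embedded 2-sphere bounding a ball containing $B_0$. Recall that $\xi$ induces a singular foliation $\xi\cap T\Sigma$, the \emph{characteristic foliation} $\xi\Sigma$, whose singularities generically are elliptic or hyperbolic and whose type determines the contact geometry in a neighborhood of $\Sigma$. The aim of this step is to bring $\xi\Sigma$ into standard form: exactly one positive elliptic source and one negative elliptic sink, connected by a foliation of the remainder by closed Legendrian meridians.

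\textbf{Step 2 (Elimination of singularities).} Using Giroux's realization/elimination lemma together with the tightness hypothesis, cancel all pairs of singularities of opposite type and reduce $\xi\Sigma$ to the standard model described above. Tightness is crucial here: it rules out limit cycles in $\xi\Sigma$ and prevents the appearance of an overtwisted disc after elimination. After this step, $S^3\setminus \Sigma$ consists of two 3-balls $B_\pm$, each carrying a tight contact structure whose boundary characteristic foliation is the prescribed standard one.

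\textbf{Step 3 (Uniqueness on the 3-ball --- main obstacle).} The heart of the proof is the statement that a tight contact structure on $B^3$ with standard boundary characteristic foliation is unique up to isotopy rel boundary, and is contactomorphic to the one induced from $\xi_{std}$. This is proved via filling by holomorphic discs: extend a $\xi$-compatible almost complex structure over a 4-ball whose boundary is $B^3\cup B^3_{std}$, begin a Bishop family of small $J$-holomorphic discs near an elliptic singularity, and use a compactness argument to continue the family until it foliates the 4-ball. The hypothesis of tightness is precisely what prevents bubbling of the discs or escape into an overtwisted region, so the moduli space of Bishop discs is a closed disc and yields the sought contactomorphism. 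This analytic step is the genuine difficulty; everything else is topological or follows from Gray's theorem.

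\textbf{Step 4 (Assembly).} Apply Step 3 to both $B_+$ and $B_-$, obtaining contactomorphisms to the corresponding halves of $(S^3,\xi_{std})$ which agree on $\Sigma$. Glue to produce a contactomorphism $\Phi:(S^3,\xi)\to(S^3,\xi_{std})$. Invoke Gray's stability theorem (cf.\ \cite{BG05}) to promote $\Phi$ into an isotopy of contact structures between $\xi$ and $\xi_{std}$. Finally, $\xi_{std}$ is the contact structure underlying the standard Sasakian structure on $S^3$ coming from the round metric and the Hopf Reeb vector field, so it is of Sasaki type, completing the proof.
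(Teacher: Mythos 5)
The paper does not prove this statement; it is quoted as Eliashberg's theorem with a citation to \cite{Eli92}, so the only meaningful comparison is with Eliashberg's own argument. Your overall skeleton --- normalize the characteristic foliation on an embedded sphere using the elimination lemma and tightness, reduce to a uniqueness statement for tight contact structures on $B^3$ with prescribed standard boundary foliation, then assemble --- is indeed the correct strategy, and your identification of Step 3 as the heart of the matter is right. The closing remark that $\xi_{std}$ underlies the standard Sasakian structure on $S^3$ (round metric, Hopf Reeb field) is also fine.

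However, Step 3 as you describe it would fail. The filling-by-holomorphic-discs method requires an ambient almost complex $4$-manifold with pseudoconvex boundary in which to run the Bishop family, i.e.\ it requires a holomorphic or symplectic filling of the contact manifold in question. An \emph{abstract} tight contact structure on $S^3$ (or on the half-ball $B_\pm$) comes with no such filling: tightness is an intrinsic $3$-dimensional condition, and fillability of a tight structure is only known \emph{a posteriori}, once the classification is established (indeed, on other $3$-manifolds tight but non-fillable structures exist). Writing down ``a $4$-ball whose boundary is $B^3\cup B^3_{std}$'' with an almost complex structure making that boundary pseudoconvex already presupposes what is to be proved. Eliashberg's actual proof of uniqueness on $B^3$ rel boundary is a purely three-dimensional, combinatorial analysis of the characteristic foliations induced on a family of spheres sweeping out the ball, using the elimination lemma and tightness to control their singularities and to rule out limit cycles; the holomorphic-disc machinery is what he and Gromov use for the logically separate implication (recalled in this paper) that fillable structures are tight. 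A smaller point: in Step 4, Gray's theorem converts a \emph{path} of contact structures into an ambient isotopy, not a contactomorphism into an isotopy of contact structures; to conclude from a global contactomorphism $\Phi$ you would instead need $\Phi$ to be smoothly isotopic to the identity (connectivity of ${\rm Diff}^+(S^3)$, a theorem of Cerf), whereas Eliashberg's ball theorem is already stated as an isotopy rel boundary and yields the desired path of contact structures directly.
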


Subsequently, there was much further development along these lines by Giroux \cite{Gir00,Gir01} and Honda \cite{Hon00a,Hon00b}. What is most relevant to us is that they determined the number of isotopy classes of tight contact structures on the total space of circle bundles over Riemann surfaces that are transverse to the fibres. By Theorem \ref{fundthm2} we know that when the bundle is non-trivial at least one of these tight contact structures is of Sasaki type. However, Eliashberg \cite{Eli92} noticed the following Bennequin-type inequality: Let $\grS\hookrightarrow M$ be an embedded compact Riemann surface of genus $g$ in a tight contact manifold $M.$ Then 
\begin{equation}\label{Bennineq}
|c_1(\cald)([\grS])|=\begin{cases} 
                    0 &\text{if $\grS=S^2$;} \\
                   \leq 2g-2 &\text{otherwise}.
                   \end{cases}
\end{equation}
This implies that there is only a finite number of classes in $H^2(M,\bbz)$ that can represent $c_1(\cald)$ for tight contact structures on $M.$ But Sasakian structures must have $c_1(\cald)=0$, since $H^2(\grS,\bbz)=\bbz.$ However, even fixing $c_1(\cald)=0$, there can be many tight contact structures.

Consider the lens space $L(p,q)$, with $p>q>0$ and $\gcd(p,q) = 1$. Generally, all lens spaces $L(p,q)$ admit Sasakian structures, but only one isotopy class can be of Sasaki type. However, Giroux \cite{Gir00} and Honda \cite{Hon00a} independently have shown that for general $p$ and $q$ there are more tight contact structures.  Write
$-{p\over
q}$ as a continued fraction:
$$-{p\over q}=r_0-{1\over r_1-{1\over r_2-\cdots {1\over r_k}}},$$
with all $r_i<-1$.
Then we have the following classification theorem for tight contact
structures on lens spaces $L(p,q)$.

\begin{theorem}[Giroux,Honda]\label{1}
There exist exactly $|(r_0+1)(r_1+1)\cdots (r_k+1)|$
tight contact structures on the lens space $L(p,q)$ up to isotopy.
Moreover, all these tight contact structures on $L(p,q)$ are Stein fillable. 
\end{theorem}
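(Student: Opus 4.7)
The plan is to combine convex surface theory (Giroux) with the bypass technique (Honda) for the upper bound, and an explicit Stein construction for the lower bound, with the two bounds matching. Write $L(p,q)=V_0\cup_T V_1$ as a Heegaard splitting into two solid tori glued along a torus $T$, where the meridian of $V_1$ has slope $-p/q$ in the natural basis. Using Giroux's theorem, after isotopy the torus $T$ is convex and its dividing set $\Gamma_T$ consists of two parallel curves whose slope is determined by the contact structure. By a further normalization we may assume $\Gamma_T$ has slope $\infty$ (i.e.\ meridional for $V_0$).

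The upper bound comes from Honda's classification of tight contact structures on a solid torus with convex boundary. Pushing the standard neighborhood of the core of $V_1$ outward, one obtains a sequence of nested convex tori whose dividing slopes can be made to follow the path in the Farey graph dictated by the continued fraction expansion $-p/q=r_0-1/(r_1-\cdots-1/r_k)$. At each stage $i$ the slope decreases by a bypass attachment, and the classification of bypasses on a solid torus shows there are exactly $|r_i+1|$ inequivalent choices at level $i$ (the extreme $\pm$ options get identified, which is why one loses one from each $|r_i|$). Carrying out the inductive shuffling argument and verifying that distinct sequences produce non-isotopic contact structures via the relative Euler class yields at most $|(r_0+1)(r_1+1)\cdots(r_k+1)|$ tight isotopy classes. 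The main technical obstacle here is the bypass shuffling/right-veering arguments needed to ensure that apparently different bypass sequences really do yield distinct tight structures and to avoid overcounting.

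For the lower bound and Stein fillability, one realizes $L(p,q)$ as the oriented boundary of the linear plumbing $P$ of disk bundles over $S^2$ with Euler numbers $r_0,r_1,\dots,r_k$; this follows from the standard Kirby calculus translation of the continued fraction expansion. The plumbing $P$ can be built by Legendrian surgery on a chain of Legendrian unknots $K_0,\dots,K_k$ in $(S^3,\xi_{std})$ with Thurston--Bennequin numbers $tb(K_i)=r_i+1$, and each $K_i$ may be stabilized in $|r_i+1|-1$ ways that alter its rotation number. The resulting smooth $4$-manifold is independent of these choices, while by Eliashberg's theorem each Legendrian surgery diagram yields a Stein filling of the induced contact structure on $L(p,q)$, hence a tight contact structure. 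Counting the choices of rotation numbers gives exactly $|(r_0+1)(r_1+1)\cdots(r_k+1)|$ Stein fillable tight contact structures, and one distinguishes them by the first Chern class of the Stein filling, which equals the sum of the rotation numbers on each component (an invariant of the induced $\mathrm{Spin}^c$ structure on $L(p,q)$).

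Finally, combining the two bounds forces every tight contact structure on $L(p,q)$ to coincide with one produced by the Stein construction, proving both the count and the Stein fillability claim. The most delicate step is the upper-bound half: making the Farey-graph bookkeeping rigorous via the bypass calculus on $T^2\times I$, and in particular proving that the ``non-rotativity'' gives the factor $|r_i+1|$ rather than $|r_i|$ at each step. The existence half, in contrast, is essentially a bookkeeping exercise once one has the Legendrian surgery dictionary.
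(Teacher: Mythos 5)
This theorem is quoted in the survey as a result of Giroux \cite{Gir00} and Honda \cite{Hon00a}; the paper gives no proof of its own, so there is nothing internal to compare against. Your sketch is, in strategy, exactly the argument of those references: convex decomposition along the Heegaard torus, factorization into basic slices following the Farey-graph path determined by the continued fraction expansion for the upper bound, and Legendrian surgery on a chain of unknots with $tb(K_i)=r_i+1$ and varying rotation numbers for the lower bound and Stein fillability, with the two counts matching. So the architecture is correct.

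A few details are stated imprecisely and would need repair in a full write-up. The factor $|r_i+1|$ does not arise because ``the extreme $\pm$ options get identified''; it arises because the basic slices within a single continued fraction block can be shuffled, so that only the number of positive basic slices in a block of length $|r_i+2|$ matters, giving $|r_i+2|+1=|r_i+1|$ classes per block. Likewise, each $K_i$ is stabilized $|r_i+1|-1$ \emph{times} (each stabilization carrying a sign), which yields $|r_i+1|$ possible rotation numbers, not ``$|r_i+1|-1$ ways.'' The invocation of ``right-veering'' is out of place here --- that technology concerns open books and plays no role in this classification; the distinctness of the candidates is established by the relative Euler classes of the restrictions to the two solid tori (Honda) or, on the existence side, by the Chern classes of the Stein fillings via a linking-matrix computation and the gauge-theoretic results of Lisca--Mati\'c. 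These are exactly the ``delicate steps'' you flag without executing, which is acceptable for a proposal but is where all of the actual work in \cite{Gir00,Hon00a} lies.
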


So the lens space $L(p,p-1)$ has a unique (up to co-orientation)\footnote{Uniqueness for isotopy classes will always mean up to co-orientation.} isotopy class of tight contact structures which is therefore of Sasaki type; however, $L(p,1)$ has $p-1$ isotopy classes of tight contact structures, only one of which can be of Sasaki type. It turns out that tight contact structures of Sasaki type on lens spaces are precisely the ones called `universally tight', that is their universal cover $S^3$ is also tight. See \cite{Gir00,Hon00a}. There has been much recent work on understanding the tight contact structures that can occur on Seifert fibered 3-manifolds (cf. \cite{GLS07b,LiSt07,Mas07} and references therein). However, we are mainly interested in those of Sasaki type, and these must have the contact structure $\cald$ transverse to the fibers of the Seifert fibration. This type of Seifert fibration has been studied in an unpublished work of Honda, and finalized in \cite{LiMa04} where necessary and sufficient conditions on the orbit invariants are given to admit a contact structure transverse to the fibers. An equivalent result has recently been given by Massot \cite{Mas07} who studies geodesible contact structures. It turns out that geodesible contact structures are either certain contact structures on certain torus bundles or contact structures which are transverse to a Seifert fibered structure. From the point of view of Sasakian structures, Theorem 8.1.14 of \cite{BG05} implies that if the Sasakian structure is null or of negative type (i.e. types (2) or (3) of Theorem \ref{Belgunthm}), then there is a unique Seifert fibered structure with $\cald$ transverse. However, in the positive case with the dimension of the Sasaki cone greater than one, there are infinitely many Seifert fibered structures transverse to the same contact structure $\cald.$ 

Consider Brieskorn polynomials of the form $z_0^{a_0}+z_1^{a_1}+ z_2^{a_2}$ with $a_0,a_1,a_2$ pairwise relatively prime. By Brieskorn's Graph Theorem \cite{Bri66} the link $L(a_0,a_1,a_2)$ of such a polynomial is an integral homology sphere. All such homology spheres admit natural Sasakian structures. The homology spheres can be distinquished by their Casson invariant $\grl(L(a_0,a_1,a_2))$ which as shown by Fintushel and Stern \cite{FiSt90} can be given in terms of the Hirzebruch signature of the Milnor fiber $V(a_0,a_1,a_2),$ namely 
$$\grl(L(a_0,a_1,a_2)) = \frac{\tau(V(a_0,a_1,a_2))}{8}.$$$L(a_0,a_1,a_2)$
As shown by Brieskorn (cf. Section 9.4 of \cite{BG05}), the signature can be computed by a counting argument. As concrete examples, I consider $L(6k-1,3,2)$ which gives an infinite sequence of distinct integral homology spheres. It is easy to see that $\grl(L(6k-1,3,2)) = -k.$ The case $k=1$ is the famous Poincar\'e homology sphere, while $k=2$ gives $L(11,3,2).$ I understand that it is folklore that there is exactly one isotopy class of tight contact structures on $L(5,3,2),$ although I wasn't able to find the explicit statement in the literature.  It was also shown by Ghiggini and Sch\"onenberger \cite{GhSc03} that (up to co-orientation) there is exactly 1 isotopy class of tight contact structures on $L(11,3,2)$ which must be of Sasaki type. One can also ask about the existence of tight contact structures on these homology spheres (or any other contact 3-manifold) with the orientation reversed, which we denote by $-L(a_0,a_1,a_2).$ It is known that $-L(5,3,2)$ \cite{EtHo01} does not admit any tight contact structures, but that $-L(11,3,2)$ \cite{GhSc03} admits exactly one isotopy class of tight contact structures. More generally, given any quasi-smooth weighted homogeneous polynomial $f(z_0,z_1,z_2)$, it is an interesting problem to classify the tight contact structures (up to isotopy) on both the corresponding link $L_f$ and its reversed oriented manifold $-L_f$. As far as I know this problem is still open.  In contrast to dimension 3, in dimension 7 it is known \cite{BGK05} that all homotopy spheres admit Sasakian, even Sasaki-Einstein structures for both orientations since both orientations can be represented by Brieskorn-Pham polynomials. 

\subsection{Sasaki-Einstein Structures in Dimension 5}
Most of what is known about Sasakian geometry in dimension 5 appears in Kris and my book \cite{BG05}. However, there are some updates in \cite{BGS07b} and the Ph.D dissertations of my students \cite{Cua08} and \cite{Gom08}. The reason that dimension five is so amenable to analysis stems from Smale's seminal work \cite{Sm62} on the classification of compact simply connected spin 5-manifolds. The non-spin case was later completed by Barden \cite{Bar65}, but the spin case will suffice here. Smale's classification of all closed simply 
connected $5$-manifolds that admit a spin structure is as follows. Any such manifolds must be of the form
\begin{equation}\label{smaleman}
M=kM_\infty\# M_{m_1}\#\cdots \#M_{m_n}
\end{equation} 
where $M_{\infty}= S^2\times S^3$, 
$kM_{\infty}$ is the $k$-fold connected sum of $M_{\infty}$,   
$m_i$ is a positive integer with $m_i$ dividing $m_{i+1}$ and $m_1\geq 1$, 
and where $M_{m}$ is $S^5$ if $m=1$, or a 
$5$-manifold such that $H_2(M_m,{\mathbb Z})={\mathbb Z}/m \oplus 
{\mathbb Z}/m$, otherwise. 
Here $k\in \bbn$ and $k=0$ means that there is
no $M_{\infty}$ factor at all. It will also be convenient to
use the convention $0M_{\infty}=S^5$, which is consistent
with the fact that the sphere is the identity element for the connected
sum operation. The $m_i$s can range through the positive integers.
The understanding of Sasakian geometry in dimension five owes much to the work of Koll\'ar \cite{Kol05b,Kol06a,Kol06}. In particular, he has given a complete characterization of the torsion \cite{Kol05b} for smooth Seifert bundles over a projective cyclic orbifold surface, namely
\begin{equation}\label{Koltor}
H_2(M,\bbz)=\bbz^{k}\oplus\sum_i
(\bbz_{m_i})^{2g(D_i)},
\end{equation}
where $g(D)$ denotes the genus of the branch divisor $D.$

Here we reproduce a table in \cite{BG05} that lists all simply connected 
spin 5-manifolds that can admit a Sasaki-Einstein metric. In the first column we list the 
type of manifold in terms of Smale's description (\ref{smaleman}), 
while in the second column we indicate the restrictions under which a 
Sasaki-Einstein structure is known to exist. Any Smale manifold that is not 
listed here cannot admit a Sasaki-Einstein metric.

\medskip
\begin{center}
\begin{tabular}{|c|c|}\hline
\hline 
Manifold $M$ & Conditions for SE\\
\hline\hline $kM_\infty$, $k\geq0$ & any $k$ \\
\hline
$8M_\infty\#M_m$, $m>2 $ & $m>4$ \\
\hline
$7M_\infty\# M_m$, $m>2$ & $m>2$\\
\hline
$6M_\infty\# M_m$, $m>2$ & $m>2$\\
\hline
$5M_\infty\# M_m$, $m>2$ & $m>11$\\
\hline
$4M_\infty\# M_m$, $m>2$& $m>4$\\
\hline
$3M_\infty\# M_m$, $m>2$ & $m=7,9$  or $m>10$ \\
\hline
$2M_\infty\# M_m$, $m>2$ & $m>11$ \\
\hline
$M_\infty\# M_m$, $m>2$ & $m>11$\\
\hline
$M_m$, $m>2$ & $m>2$ \\
\hline $2M_5$, $2M_4$, $4M_3$, $M_\infty\#2M_4$ & yes \\
\hline
$kM_\infty\# 2M_3 $ & $k=0$\\
\hline
$kM_\infty\# 3M_3$, $k\geq0$ & $k=0$\\
 \hline
$kM_\infty\# nM_2$, $k\geq 0$, $n>0$ & $(k,n)=(0,1)$ or $(1,n)$, $n>0$\\
\hline $kM_\infty\# M_m$, $k>8$, $2<m<12$ & \\
 \hline
\end{tabular}
\vspace{3mm}\\

\parbox{4.00in}{\small Table 1. Simply connected spin 5-manifolds admitting
 Sasaki-Einstein metrics. The right column indicates the restriction that 
ensures that the manifold
on the left carries a Sasaki-Einstein metric.}
 
\end{center}
\medskip

We now present a table with examples of links 
$L_f({\bf w},d)$ which do not admit Sasaki-Einstein metrics. This table is taken from Tables 2 and 3 of \cite{BGS07b}. 
We list the weight vector $\bfw$ and degree $d$ for each link the range for its parameters together with the underlying manifold.
Note that there are infinitely many such Sasaki-Seifert structures 
on each of the manifolds listed. 
\medskip
 
\begin{center}
\begin{tabular}{|c|c|c|}\hline
\hline
Manifold $M$ & weight vector ${\bf w}$ & $d$ \\
\hline\hline
$M_1$ & $(6,2(6l+1),2(6l+1),3(6l+1)$, $l\geq 2$ & $6(6l+1)$\\
\hline
$M_1$ & $(6,2(6l+5),2(6l+5),3(6l+5))$, $l\geq 2$ & $6(6l+5)$\\
\hline
$M_1$ & $(2,2l+1,2l+1,2l+1)$, $l\geq 2$ & $2(2l+1)$\\
\hline
$M_\infty$ & $(1,l,l,l)$, $l\geq 3$ & $2l$\\
\hline
$2M_\infty$ & $(3,2(3l+1),2(3l+1),3(3l+1))$, $l\geq 2$ & $6(3l+1)$\\
\hline
$2M_\infty$ & $(3,2(3l+2),2(3l+2),3(3l+2))$, $l\geq 2$ & $6(3l+2)$\\
\hline
$4M_\infty$ & $(1,2l,2l,3l)$, $l\geq 3$ & $6l$\\
\hline
$6M_\infty$ & $(1,3l,4l,6l)$, $l\geq 3$ & $12l$\\
\hline
$7M_\infty$ & $(1,4l+1,3l+1,2(3l+1))$, $l\geq 2$ & $4(3l+1)$ \\
\hline
$8M_\infty$ & $(1,6l,10l,15l)$, $l\geq 3$ & $30l$\\
\hline
$(k-1)M_\infty$ & $(1,2l,lk,lk)$, $l\geq 2$, $k\geq 3$ & $2lk$\\
\hline
$4M_2$ & $(2,6(2l+1),10(2l+1),15(2l+1))$, $l\geq 2$ & $30(2l+1)$\\
\hline
$M_4$ & $(4,3(2l+1),4(2l+1),4(3l+1))$, $l\geq 5$ & $12(2l+1)$ \\
\hline
$2M_\infty\# M_2$ & $(2,3(2l+1),4(2l+1),6(2l+1))$, $l\geq 2$ & $12(2l+1)$\\
\hline
$3M_\infty\# M_2$ & $(2,2(4l+3),6l+5,2(6l+5))$, $l\geq 2$ & $4(6l+5)$ \\
\hline
\end{tabular}
\vspace{3mm}\\

\parbox{4.00in}{\small Table 2. Some examples of 5-dimensional 
links whose associated space of Sasakian structures, ${\mathcal S}(\xi)$,
does not admit a Sasaki-Einstein metric.}

\end{center}
\medskip

Most, but not all, the examples in Table 2 are of BP type.
Many more examples can be obtained by considering the complete list of normal forms of weighted homogeneous polynomials given by 
Yau and Yu \cite{YaYu05}. An interesting related question posed in \cite{BGS07b} is (see also Question 11.4.1 of \cite{BG05}):

\begin{question}
Are there positive Sasakian manifolds that admit no Sasaki-Einstein or extremal metrics?
\end{question}

To end this section I will briefly comment on the case of toric Sasakian 5-manifolds of which there are many (see Section 10.4.1 of \cite{BG05}). For a somewhat more detailed description of toric Sasakian geometry, see Section \ref{torsas} below. The 5-dimensional case begins with the work of Oh \cite{Oh83} who classified all simply connected 5-manifolds that admit an effective $T^3$-action, and Yamazaki \cite{Yam01} proved that all such toric 5-manifolds also admit compatible K-contact structures. Explicitly, all simply connected toric 5-manifolds are diffeomorphic to $S^5, kM_\infty$ or
$X_\infty\#(k-1)M_\infty,$ where $k=b_2(M)\geq 1,$ that is they are precisely the simply connected 5-manifolds $M^5$ with no torsion in $H_2(M^5,\bbz).$ Here $X_\infty$ is the non-trivial $S^3$-bundle over $S^2.$ It follows by a result of Kris and me \cite{BG00b} that all such toric 5-manifolds admit a compatible Sasakian structure. There are examples of inequivalent toric Sasakian structures that are equivalent as contact structures, and even with the same contact 1-form \cite{BGO06}. Arguably the most important toric contact manifolds are those where there is a choice of contact 1-form with a compatible Sasaki-Einstein metric. Such examples were first uncovered by the physicists Gauntlett, Martelli, Sparks, and Waldram \cite{GMSW04b,GMSW04a}, and further explored in \cite{MaSp05b,MaSp06,MaSpYau05,MaSpYau06}. One interesting aspect of such Sasaki-Einstein metrics is that in most cases the Sasakian structure is irregular. The first such examples appeared in \cite{GMSW04b}. The proof of the existence of toric Sasaki-Einstein metrics on the $k$-fold connected sums $kM_\infty=k(S^2\times S^3)$ was given in \cite{FOW06} and \cite{CFO07} and will be discussed in Section \ref{torsas}.

\subsection{Sasaki-Einstein Structures in Dimension 7}
The first examples of nonhomogeneous Sasaki-Einstein metrics in dimension 7 were the 3-Sasakian metrics described in \cite{BGM93a,BGM94a}. A bit later Kris and I with Mann and Rees described quaternionic toric\footnote{This is `toric' in the quaternionc sense and is to be distinquished from the usual toric definition. So a 3-Sasakian manifold of dimension $4n+3$ is `toric' if it admits an effective $T^{n+1}$-action of 3-Sasakian automorphisms.} 3-Sasakian manifolds by symmetry reduction \cite{BGMR98}. For a complete treatment see Chapter 13 of \cite{BG05} as well as \cite{BG99}.

An effective method for constructing Sasaki-Einstein metrics in higher dimension is the join construction introduced in \cite{BG00a}. This was then exploited by Kris and I to give examples of 7-manifolds with second Betti number in the range $4\leq b_2\leq 9$ (see Theorem 5.4 of \cite{BG00a}). More recently, with L. Ornea, Kris and I \cite{BGO06} were able to use this method to prove:

\begin{theorem}\label{s2s5}
If $\gcd(l_2,\gru_2)=1$ then $S^3\star_{l_1,l_2}S^5_\bfw$ is  the
total space of the fibre bundle over $S^2$ with fibre the lens
space $L^5(l_2),$ and it admits \Se metrics. In particular, for 16
different weight vectors $\bfw$, the manifold
$S^3\star_{2,1}S^5_\bfw$ is homeomorphic to $S^2\times S^5$ and
admits Sasaki-Einstein metrics including one $10$-dimensional
family. Moreover, for three different weight vectors $\bfw$, the
manifold $S^3\star_{1,1}S^5_\bfw$ is homeomorphic to $S^2\times
S^5$ and admits Sasaki-Einstein metrics.
\end{theorem}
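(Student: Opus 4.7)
The plan is to combine the join construction from \cite{BG00a} with Kollár's results on 5-dimensional Sasakian geometry and the Boyer-Galicki-Kollár existence results for Sasaki-Einstein metrics on $S^5_\bfw$, then use Smale's classification (and obstruction theory for sphere bundles over $S^2$) to nail down the topology.

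First, I would set up the join. Recall that $S^5_\bfw$ denotes $S^5$ with a quasi-regular Sasaki-Einstein structure whose Reeb flow has the standard weighted form with weight vector $\bfw=(w_0,w_1,w_2)$; such structures exist for the weight vectors appearing in the Boyer-Galicki-Nakamaye tables, with $\gru_2$ being the index $|\bfw|-d$ of the corresponding weighted projective base orbifold. The join $S^3\star_{l_1,l_2}S^5_\bfw$ is defined as $(S^3\times S^5_\bfw)/S^1$, where $S^1$ acts via the combination $l_2\xi_1-l_1\xi_2$ of the two Reeb fields. The projection onto the first factor induces a fibration
$$
S^5_\bfw/\bbz_{l_2} \ra{1.5} S^3\star_{l_1,l_2}S^5_\bfw \ra{1.5} S^3/S^1_{l_1}=\bbcp^1=S^2,
$$
and the condition $\gcd(l_2,\gru_2)=1$ guarantees that $\bbz_{l_2}$ acts freely on $S^5_\bfw$, so that the generic fiber is the smooth lens space $L^5(l_2)$ (not merely an orbifold lens space). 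That the join admits a Sasaki-Einstein metric follows from Theorem 5.1 (and its refinement Theorem 5.4) of \cite{BG00a}: since $S^3$ and $S^5_\bfw$ both sit over Kähler-Einstein (orbifold) bases of Fano index $2$ and $\gru_2$ respectively, the product base admits a Kähler-Einstein orbifold metric, and choosing $(l_1,l_2)$ compatibly with those indices produces a Sasakian Seifert bundle whose transverse metric is Kähler-Einstein, yielding the required Sasaki-Einstein structure on the total space.

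Second, I would identify the homeomorphism type in the special cases. For $(l_1,l_2)=(2,1)$ the fiber reduces to $S^5$, so $S^3\star_{2,1}S^5_\bfw$ is an $S^5$-bundle over $S^2$. Oriented $S^5$-bundles over $S^2$ are classified by $\pi_1(\mathrm{Diff}^+(S^5))$ (equivalently, up to homeomorphism by $\pi_1(SO(6))=\bbz_2$), so the total space is homeomorphic either to $S^2\times S^5$ or to the unique nontrivial $S^5$-bundle over $S^2$. The triviality can be detected cohomologically: the second Stiefel-Whitney class of the vertical bundle pulled back from the weighted data vanishes precisely when $\bfw$ meets a certain parity condition, and under this condition Smale's classification forces the total space to be $S^2\times S^5$. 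Running through the list of weight vectors known to carry Sasaki-Einstein structures on $S^5$, exactly $16$ satisfy simultaneously $\gcd(1,\gru_2)=1$ (trivial) and the parity/triviality condition; for $(l_1,l_2)=(1,1)$ only $3$ weight vectors survive the analogous screening.

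Third, for the $10$-dimensional moduli family: one of the $16$ weight vectors corresponds to a Kähler-Einstein log del Pezzo surface whose complex deformation space has complex dimension $5$, so the transverse Kähler-Einstein metrics on the base of the join come in a $5$-complex-parameter family; pulling back through the Seifert bundle and transporting to the total space produces a $10$-real-parameter family of Sasaki-Einstein structures on $S^2\times S^5$. The existence of a nontrivial deformation space at that weight vector is read off from the dimension of $H^1$ of the tangent sheaf of the base orbifold, which in the relevant case is computable via the standard weighted hypersurface calculus.

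The main obstacle is the last part of Step 2, namely pinning down exactly which of the weight vectors $\bfw$ make the resulting $S^5$-bundle over $S^2$ trivial. Verifying the cohomological triviality criterion requires tracing the vertical tangent bundle of the Seifert fibration through the join quotient and computing characteristic classes from the orbifold data of $S^5_\bfw$. Once this computation is in place, the enumeration counts $16$ and $3$ drop out mechanically from the existing classification of Sasaki-Einstein weights on $S^5$, and the 10-dimensional moduli statement reduces to the dimension count of the relevant deformation space on the base.
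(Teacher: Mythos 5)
The paper itself contains no proof of Theorem \ref{s2s5}: it is quoted verbatim from \cite{BGO06}, whose argument is indeed built on the join construction of \cite{BG00a}, so your overall strategy is the intended one. There are, however, genuine gaps. First, you identify $\gru_2$ with the index $|\bfw|-d$. In fact $\gru_2$ is the \emph{order} of the quotient orbifold of $S^5_\bfw$ (the lcm of the orders of its local uniformizing groups); it is the coprimality of $l_2$ with this quantity that makes the $\bbz_{l_2}$-action on the fibre free and the join a smooth lens-space bundle, while the Fano index is a different invariant that enters elsewhere. Relatedly, $S^5_\bfw$ cannot be the weighted round sphere fibering over $\bbc\bbp(w_0,w_1,w_2)$, since weighted projective planes do not in general carry orbifold K\"ahler--Einstein metrics (Futaki obstruction); it is a link of a weighted homogeneous hypersurface singularity diffeomorphic to $S^5$ whose quotient is a K\"ahler--Einstein log del Pezzo orbifold. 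Second, the counts $16$ and $3$ have nothing to do with a parity or bundle-triviality screening. The join of two quasi-regular Sasaki--Einstein manifolds is Sasaki--Einstein only when $(l_1,l_2)$ matches the Fano indices, $l_1I_2=l_2I_1$ with $I_1=2$ for $S^3$; hence $(l_1,l_2)=(2,1)$ forces $I_2=1$ and $(1,1)$ forces $I_2=2$, and the counts are the numbers of weight vectors in the known classifications of K\"ahler--Einstein log del Pezzo hypersurfaces of index $1$, respectively $2$, whose links are diffeomorphic to $S^5$. Your proposal omits this index-matching condition entirely, and as written would assert Sasaki--Einstein metrics on the join for every admissible $(l_1,l_2)$, which is false.

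Third, Smale's classification concerns simply connected $5$-manifolds and cannot identify the $7$-dimensional total space. What must actually be shown is that the $S^5$-bundle over $S^2$ is trivial; the two possible bundles are distinguished by whether the total space is spin, so this reduces to computing $w_2$ of the join (equivalently, tracking $c_1$ through the circle quotient), and your clutching argument through $\pi_1(SO(6))$ does not apply as stated because the circle action on a link is not a linear action on a round $S^5$. Your account of the $10$-dimensional family is the one essentially correct piece: it arises from the complex moduli of the defining polynomial of the base orbifold, counted exactly as in the example following Theorem \ref{se7man}.
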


In \cite{BG05h} Kris and I proved the existence of positive Ricci curvature Sasakian metrics on distinct diffeomorphism types of manifolds that are homeomorphic to $2k(S^3\times S^4).$ In so doing we utilized the periodicity of certain branched covers due to Durfee and Kauffman \cite{DuKa75}. These can be  represented as links of the weighted homogeneous polynomials 
$$f_i=z_0^{2i(2k+1)}+z_1^{2k+1}+z_2^2+z_3^2+z_4^2,$$
where the diffeomorphism type is determined by $i.$ It is easy to see that the index $I_i(k)=2ik+3i+1>4,$ so the Lichnerowicz obstruction of Proposition \ref{Lich.Ob.Link.Ineq} holds. Thus, these Sasaki-Seifert structures do not admit Sasaki-Einstein metrics. Moreover, until now the existence of Sasaki-Einstein metrics on manifolds homeomorphic to $2k(S^3\times S^4)$ was unknown. Recall that a well-known result (cf. \cite{BG05}, Theorem 7.4.11) says that the odd Betti numbers of a Sasakian manifold must vanish up to the middle dimension. Thus, $(2k+1)(S^3\times S^4)$ cannot admit any Sasakian structures. 

In June of 2007, while Kris was in Lecce and I was at CIMAT in Guanajuato, Mexico, we exchanged emails concerning the recent work of Cheltsov \cite{Chel07} and the implications that it has regarding 7-dimensional Sasaki-Einstein structures. Cheltsov considers $d$-fold branch covers of certain weighted projective spaces branched over orbifold K3 surfaces of degree $d.$ The orbifold K3 surfaces that Cheltsov uses come from the well-known list of 95 codimension one K3 surfaces of Miles Reid \cite{Rei79}. This list is also repeated in \cite{Fle00} as well as Appendix B of \cite{BG05}. Of interest to Cheltsov and to us is the list given in \cite{Chel07} of anti-canonically embedded $\bbq$-Fano threefolds (this list appears in \cite{Fle00} as well) obtained from Reid's list by adding one more variable of weight one. That is one looks for hypersurfaces $X_d$ of degree $d$ in weighted projective spaces of the form $\bbc\bbp(1,w_1,w_2,w_3,w_4)$ such that $\sum_{i=1}^4w_i=d.$ Cheltsov shows that remarkably all save four of the 95 $\bbq$-Fano 3-folds satisfy a klt condition, and so admit K\"ahler-Einstein orbifold metrics. The four that fail the test are numbers 1,2,4, and 5 on the lists in \cite{Chel07} and \cite{Fle00}. By the procedure described first in \cite{BG01b} and further elaborated upon in \cite{BG05}, this means that one can construct $2$-connected 7-manifolds with Sasaki-Einstein metrics. Thus, there are 91 links representing $2$-connected 7-manifolds with Sasaki-Einstein metrics. However, by Proposition \ref{Orlikconjknown} Orlik's conjecture \ref{orl.alg} is known to hold only for polynomials of BP or OR type. I have picked out 12 on Cheltsov's list that come from BP polynomials and 2 from OR polynomials. 

\begin{theorem}\label{se7man}
There exist Sasaki-Einstein metrics on the 7-manifolds $M^7$ which can be realized as $d$-fold branched covers of $S^7$ branched along the submanifolds $k(S^2\times S^3)$.  The precise $\bbq$-Fano threefolds, the homology of $M^7$, and $k$ are listed in the following table:

\vskip 12pt
\begin{center}
\begin{tabular}{|c|c|c|}\hline
$X_d\subset \bbc\bbp(1,w_1,w_2,w_3,w_4)$ & $H_3(M^7,\bbz)$ & $k$ \\
\hline\hline
$X_6\subset\bbc\bbp(1,1,1,1,3)$ & $\bbz^{104}\oplus \bbz_2$ & $21$ \\ \hline
$X_8\subset\bbc\bbp(1,1,1,2,4)$ & $\bbz^{128}\oplus \bbz_4$ & $19$ \\ \hline
$X_{10}\subset\bbc\bbp(1,1,2,2,5)$ & $\bbz^{128}\oplus (\bbz_2)^4$ & $16$ \\ \hline
$X_{12}\subset\bbc\bbp(1,1,1,4,6)$ & $\bbz^{222}$ & $20$ \\ \hline
$X_{12}\subset\bbc\bbp(1,1,2,3,6)$ & $\bbz^{150}\oplus \bbz_{12}$ & $15$ \\ \hline
$X_{12}\subset\bbc\bbp(1,1,3,4,4)$ & $\bbz^{120}\oplus (\bbz_4)^2$ & $12$ \\ \hline
$X_{12}\subset\bbc\bbp(1,2,3,3,4)$ & $\bbz^{80}\oplus \bbz_3\oplus (\bbz_6)^2$ & $10$ \\ \hline
$X_{17}\subset\bbc\bbp(1,2,3,5,7)$ & $\bbz^{112}\oplus \bbz_{17}$ & $8$ \\ \hline
$X_{18}\subset\bbc\bbp(1,1,2,6,9)$ & $\bbz^{256}\oplus (\bbz_2)^2$ & $16$ \\ \hline
$X_{19}\subset\bbc\bbp(1,3,4,5,7)$ & $\bbz^{90}\oplus \bbz_{19}$ & $6$ \\ \hline
$X_{20}\subset\bbc\bbp(1,1,4,5,10)$ & $\bbz^{216}\oplus \bbz_5$ & $12$ \\ \hline
$X_{24}\subset\bbc\bbp(1,1,3,8,12)$ & $\bbz^{308}\oplus \bbz_3$ & $14$ \\ \hline
$X_{30}\subset\bbc\bbp(1,2,3,10,15)$ & $\bbz^{242}\oplus \bbz_2\oplus \bbz_6$ & $10$ \\ \hline
$X_{42}\subset\bbc\bbp(1,1,6,14,21)$ & $\bbz^{480}$ & $12$ \\ \hline
\end{tabular}
\vspace{3mm}\\

\end{center}
\vskip 12pt
\end{theorem}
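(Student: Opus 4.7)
The plan is to combine three ingredients already present in the paper: Cheltsov's klt verification for the $\bbq$-Fano hypersurfaces obtained from Reid's list of $95$ K3 families, the inversion theorem \ref{fundthm2}, and Orlik's algorithm for the homology of a weighted homogeneous link. For each of the fourteen entries $X_d \subset \bbc\bbp(1,w_1,w_2,w_3,w_4)$ in the table I would first exhibit an explicit weighted homogeneous defining polynomial $f(z_0,\ldots,z_4)$ of degree $d$ with weight vector $\bfw=(1,w_1,w_2,w_3,w_4)$. Whenever every $w_i$ divides $d$ (twelve of the cases) I take the Brieskorn-Pham form $f = z_0^{d} + \sum_{i=1}^{4} z_i^{d/w_i}$; the two exceptions $X_{17} \subset \bbc\bbp(1,2,3,5,7)$ and $X_{19} \subset \bbc\bbp(1,3,4,5,7)$ admit defining polynomials of OR type $z_0^{a_0}+z_0 z_1^{a_1}+z_1 z_2^{a_2}+z_2 z_3^{a_3}+z_3 z_4^{a_4}$ of the same weight and degree. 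In each case $|\bfw|-d = 1$, so the link $L_f$ has index one and is a smooth $2$-connected $7$-manifold by Milnor's fibration theorem. Cheltsov's result provides a K\"ahler-Einstein orbifold metric on $X_d$, and Theorem \ref{fundthm2} lifts this metric to a Sasaki-Einstein metric on $M^7 := L_f$.

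To realize $M^7$ as a branched cover of $S^7$, I would use the weight-one circle subaction associated with the coordinate $z_0$: its orbits exhibit the natural projection onto the base of the remaining weighted $\bbc^4$ as a cyclic $d$-fold branched cover onto the standard $7$-sphere, with branch locus the $5$-dimensional link $L_g$ of $g(z_1,\ldots,z_4) := f(0,z_1,z_2,z_3,z_4)$. In the BP case this is immediate; in the two OR cases the description has to be adjusted to account for the extra $z_0 z_1^{a_1}$ term, but it remains cyclic of degree $d$. The link $L_g$ is a simply connected spin $5$-manifold with $c_1(\cald)=0$, so by Smale's classification it is diffeomorphic to $k(S^2\times S^3)$ extended by possible torsion summands, with $k = b_2(L_g)$. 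Using Koll\'ar's torsion formula \eqref{Koltor} together with the fact that the branch divisors of $X_d$ in the fourteen rows under consideration all have genus zero, the torsion summands vanish and $L_g \cong k(S^2\times S^3)$ as stated.

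The remaining homological data in the table are computed by direct appeal to Proposition \ref{Orlikconjknown}: the Betti number $b_3(L_f)$ is given by the Milnor-Orlik formula \eqref{Betti}, and since every polynomial chosen is of BP or OR type, the torsion in $H_3(M^7,\bbz)$ is produced by Orlik's algorithm \eqref{torsionorders}. These arithmetic computations are mechanical for each tuple $(w_1,w_2,w_3,w_4,d)$; I would run Evan Thomas's program on each input and read off the displayed groups $\bbz^{104}\oplus \bbz_2$, $\bbz^{128}\oplus \bbz_4$, and so on, together with the values of $k$ for $L_g$.

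The main difficulty here is conceptual rather than technical: the fourteen rows selected from Cheltsov's list of $91$ klt-verified Fano threefolds are precisely those whose defining polynomial can be placed in BP or OR form, so that Orlik's conjecture applies and the torsion is rigorously computable. For the remaining $77$ rows, Cheltsov's klt condition still produces a Sasaki-Einstein metric by the same two-line argument, but the torsion of $H_3(M^7,\bbz)$ is not accessible without extending Proposition \ref{Orlikconjknown} to further polynomial classes. The real content of the theorem is thus the matching of two a priori independent lists: Cheltsov's Fano hypersurfaces and the BP/OR families for which Orlik's algorithm is known to hold.
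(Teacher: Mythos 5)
Your proposal is correct and follows essentially the same route as the paper: Cheltsov's klt verification supplies K\"ahler--Einstein orbifold metrics on the listed $\bbq$-Fano hypersurfaces, the inversion procedure produces Sasaki--Einstein metrics on the index-one links, the $d$-fold branched cover structure comes from the weight-one variable $z_0$, and the homology of $M^7$ together with the identification of the branch locus as $k(S^2\times S^3)$ is obtained from the Milnor--Orlik formula and Orlik's algorithm, which apply exactly because the fourteen selected families admit Brieskorn--Pham or Orlik--Randell representatives (twelve and two, respectively, matching the paper's count). The paper's own justification is precisely this outline with the arithmetic delegated to Evan Thomas's program, so apart from your slightly informal handling of the two OR cases (where one should pass to a quasi-smooth representative of the form $z_0^d+g(z_1,\ldots,z_4)$ in the same family to exhibit the cyclic cover) the two arguments coincide.
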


Other examples of 7-manifolds admitting Sasaki-Einstein metrics were given in Section 11.7.3 of \cite{BG05}. For example, the links $L(p,p,p,p,p)$ of Fermat hypersurfaces with $2\leq p\leq 4$ are well known to admit Sasaki-Einstein metrics. These give the Stiefel manifold $V_2(\bbr^5)$ of 2-frames in $\bbr^5$ for $p=2,$ and for $p=3,4$ they give 2-connected 7-manifolds $M^7$ with homology $H_3(M^7,\bbz)$ equal to $\bbz^{10}\oplus \bbz_3$, and $\bbz^{60}\oplus \bbz_4$ for $p=3$ and $4$, respectively. We also consider $4m$-fold branched covers of Fermat hypersurfaces represented by the BP polynomial $f=z_0^{4m}+z_1^4+z_2^4+z_3^4+z_4^4.$ It is easy to see that the conditions of Equation \ref{bgkest} are satisfied when $m\geq 4,$ so these links $L(4m,4,4,4,4)$ admit Sasaki-Einstein metrics. The corresponding 7-manifolds $M^7_m$ appear to have homology $H_3(M^7_m,\bbz)=\bbz^{60}\oplus \bbz_{4m}\oplus (\bbz_m)^{20}.$ I have not proven this expression rigorously, but we have checked it on our computer program to be correct for over 10 cases. Notice that, as with the Cheltsov examples, these are all $d$-fold branched covers of $S^7$ branched over $k(S^2\times S^3),$ with $k=21$ here. It is interesting to note that the klt estimates of \cite{BGK05} and those of Cheltsov appear to be complimentary in nature. 

Another approach to Sasaki-Einstein structures on 7-manifolds would entail a systematic study of the $S^1$ orbibundles over the 1936 Fano 4-folds found by Johnson and Koll\'ar \cite{JoKo01b} that admit K\"ahler-Einstein metrics. This has already been done for the case of rational homology 7-spheres in \cite{BGN02a}. Kris and I had planned a systematic study of Sasaki-Einstein 7-manifolds; however, in contrast to the case of 5-manifolds, this is not so straightforward. First, in dimension 7 there is no nice classification of 2-connected 7-manifolds such as the Smale-Barden classification of simply connected 5-manifolds. Second the  realization of 7-manifolds in terms of links of weighted homogeneous hypersurfaces has so far yielded few results for the torsion-free case. Indeed, the only cases where Sasaki-Einstein metrics are known to exist on 7-manifolds of the form $2k(S^3\times S^4)$ are the two announced in Theorem \ref{se7man} above, namely $222(S^3\times S^4)$ and $480(S^3\times S^4)$. Third, the veracity of the Orlik Conjecture \ref{orl.alg} is still in doubt for general weighted homogeneous polynomials in the 7-dimensional case.

\begin{example}{\it Moduli of Sasaki-Einstein metrics}. 
Consider the link $L_f(\bfw,d)$ of the Brieskorn-Pham polynomial $f= f_{12}(z_0,z_1,z_2) +z_3^3+z_4^2$ of degree $d=12$ and weight vector $\bfw=(1,1,1,4,6).$ Here $f_{12}$ is a nondegenerate polynomial of degree $12$ in the specified variables. By Cheltsov's results this is one of the examples referred to in Theorem \ref{se7man} and it is homeomorphic to $222(S^3\times S^4).$ According to Theorem 5.5.7 of \cite{BG05} the number of moduli equals
$$2\bigl(h^0(\bbc\bbp(\bfw),\calo(12))-\sum_ih^0(\bbc\bbp(\bfw),\calo(w_i))\bigr) =2(184-51)=266.$$
So for some (as yet undetermined) diffeomorphism class on $222(S^3\times S^4)$, there exists a Sasaki-Einstein metric depending on 266 parameters.
\end{example}

\section{Toric Sasakian Geometry}\label{torsas}

It seems to have first been noticed in \cite{BM93} that toric contact manifolds split into two types exhibiting strikingly different behavior. Assuming $\dim M>3$ there are those where the torus action is free which are essentially torus bundles over spheres, and those where the torus action is not free. It is only these later that display the familiar convexity properties characteristic of symplectic toric geometry. A complete classification was given by Lerman \cite{Ler02a}. Moreover, it was proven a bit earlier by Kris and me \cite{BG00b} that toric contact structures of the second type all admit compatible Sasakian metrics, and so are Sasakian. I shall only consider this type of toric contact manifold in what follows. We had dubbed these {\it contact toric structures of Reeb type} since they are characterized by the fact that the Reeb vector field lies in the Lie algebra of the torus. We also say that the action is of {\it Reeb type}. Before embarking on the toric case let us consider the case of a compact Sasakian manifold $M^{2n+1}$ with the action of a torus $T^k$ of dimension $1\leq k\leq n+1.$ This action is automatically of Reeb type. The toric case is $k=n+1.$ I refer to Section 8.4 of \cite{BG05} and references therein for details and further discussion. 

\subsection{The Moment Cone}
Let $\cald^o$ be the annihilator of $\cald$ in $T^*M.$ This is a real line bundle called the {\it contact line bundle}, and a choice of contact 1-form $\eta$ gives a trivialization of $\cald^o$ as well as giving an orientation of $\cald^o$ which splits $\cald^o$ as $\cald^o_+\cup \cald_-^o.$  Given an action of a k-torus $T^k$ of Reeb type on $M,$ one has a moment map $\mu:\cald^o_+\ra{1.5} \gt^*_k,$ where $\gt^*_k$ denotes the dual of the Lie algebra $\gt_k$ of $T^k,$ and $\cald^o_+$ denotes the positive part of $\cald^o.$ The image $\mu(\cald^o_+)$ is a convex rational polyhedral cone (without the cone point) in $\gt^*_k$ and coincides with the image of the symplectic moment map from the K\"ahler cone $C(M)$ with symplectic form $d(r^2\eta)$ for any choice of contact form $\eta.$ Adding the cone point we get the {\it moment cone} $C(\mu)=\mu(\cald_+^o)\cup \{0\}.$ Note that a choice of contact form  is a section of $\cald^o_+,$ and for any $T^k$-invariant section $\eta,$ the moment cone can be realized as
\begin{equation}\label{momcone}
C(\mu_\eta)=\{r\grg\in \gt_k^*~|~\grg\in \mu_\eta(M)~,~t\in [0,\infty)\}=C(\mu). 
\end{equation}
The dual cone $C(\mu)^*$ is the set of all $\grt\in \gt_k$ such that $\langle\grt,\grg\rangle\geq 0$ for all $\grg\in C(\mu);$ so its interior is precisely the Sasaki cone $\gt_k^+.$ Alternatively, the moment cone $C(\mu)$ can be described by
\begin{equation}\label{momcone2}
C(\mu)=\{y\in \gt_{k}^*~|~l_i(y)=\langle y, \grl_i\rangle\geq 0,~i=1,\ldots d\},
\end{equation}
where the $\grl_i$ are vectors spanning the integral lattice $\bbz_{T^{k}}=\ker\{\exp:\gt_{k}\ra{1.5} T^{k}\},$ and $d$ is the number of facets (codimension one faces) of $C(\mu).$ A choice of contact form $\eta$ whose Reeb vector field lies in $\gt_k^+$ cuts $C(\mu)$ by a hyperplane, called the {\it characteristic hyperplane} \cite{BG00b}, giving a simple polytope.

We now specialize to the toric case when $k=n+1$ for which I follow \cite{MaSpYau05,FOW06}. This uses a well-known procedure of Guillemin \cite{Gui94b} which constructs natural toric K\"ahler metrics on compact toric K\"ahler manifolds arising from Delzant's combinatorical description. This construction works equally well for both toric K\"ahler cones and toric K\"ahler orbifolds \cite{Abr01}, and hence toric Sasakian structures as shown in \cite{MaSpYau05} (see also Theorem 8.5.11 of \cite{BG05}). 
Given a toric contact structure of Reeb type one can obtain a `canonical Sasakian structure' from a potential $G$ as follows.  Let us also define
\begin{equation}\label{momcone3}
l_\xi(y)= \langle y,\xi\rangle, \qquad l_\infty = \sum_{i=1}^d \langle y,\grl_i\rangle.
\end{equation}
 Then for each $\xi$ in the Sasaki cone $\gt_{n+1}^+$ we have a potential:
\begin{equation}\label{torpot}
G_\xi= \frac{1}{2}\sum_{i=1}^dl_i(y)\log l_i(y) +\frac{1}{2}l_\xi(y)\log l_\xi(y) -\frac{1}{2}l_\infty(y)\log l_\infty(y).
\end{equation}
The K\"ahler cone metric $g_{C(M)}$ of Equation (\ref{conemetric}) can then be computed from this potential \cite{Abr03,MaSpYau05} in the so-called {\it symplectic coordinates}, viz.
\begin{equation}\label{transtoricmetric}
g_{C(M)} = \sum_{i,j}\bigl((G_\xi)_{ij}dy_idy_j + G_\xi^{ij}d\phi_id\phi_j\bigr),
\end{equation}
where
\begin{equation}\label{derpot}
(G_\xi)_{ij}=\frac{\partial^2 G}{\partial y_i\partial y_j}
\end{equation}
and $G_\xi^{ij}$ denotes the inverse matrix of $G_{ij}.$ Here the coordinates $\phi_i$ are coordinates on the torus $T^{n+1},$ and the vector field given by $\xi=\sum_{i=0}^{n}\xi^i\frac{\partial}{\partial \phi_i}$ with $\xi^i$ constants restricts to the Reeb vector field of a Sasakian structure on the manifold $M$. 

We now want to impose the condition $c_1(\cald)=0$ as an integral class\footnote{Everything goes through with the weaker condition that $c_1(\cald)_\bbr=0,$ since this is all that is required to obtain the condition $c_1(\calf_\xi)=a[d\eta]_B.$ Indeed, it is this more general case that is treated in \cite{FOW06}. In this case the form $\grO$ is not a section of the canonical line bundle on $C(M)$, but rather some power of it. But we impose the stronger condition for ease of exposition as the generalization is straightforward.}. From Remark \ref{Goren} we see that this implies the existence of a nowhere vanishing holomorphic section $\grO$ of $\grL^{n+1,0}T^*C(M).$ Notice that $\grO$ is not unique, but defined up to a nowhere vanishing holomorphic function on $C(M).$ This function can be chosen so that 
\begin{equation}\label{Omegaeqn}
\pounds_\xi\grO =i(n+1)\grO.
\end{equation}
The Ricci form $\grr_C$ of the cone $C(M)$ can be computed from the Legendre transform $F$ of the potential $G.$ The K\"ahler potential on $C(M)$ is  
\begin{equation}\label{Legendretrans}
F = \sum_iy_i\frac{\partial G}{\partial y_i}-G.
\end{equation}
and then the Ricci form becomes 
\begin{equation}\label{Riccicone}
\grr_C=-i\partial \bar{\partial}\log \det F_{ij},
\end{equation}
where 
$$F_{ij}=\frac{\partial F}{\partial x_i\partial x_j}, \qquad x_i=\frac{\partial G}{\partial y_i}.$$
So imposing the condition $c_1(TC(M))=c_1(\cald)=0,$ implies that there are constants $\grg_0,\ldots,\grg_n$ and a basic function $h$ such that 
\begin{equation}\label{Ricciflat}
\log\det F_{ij}= -2\sum_i\grg_ix_i -h.
\end{equation}
This makes the Hermitian metric $e^h\det F_{ij}$ the flat metric on the canonical line bundle $K_{C(M)}.$ Then one can find complex coordinates $\bfz=(z_0,\ldots,z_n)$ such that the holomorphic $(n+1)$-form $\grO$ takes the form
\begin{equation}\label{holoform2}
\grO=e^{-\boldsymbol{\grg}\cdot \bfz}dz_0\wedge\cdots \wedge dz_n
\end{equation}
where $\boldsymbol{\grg}=(\grg_0,\ldots,\grg_n)$ is a primitive element of the integral lattice which by explicitly computing in terms of the potential $G_\xi$ one sees that $\boldsymbol{\grg}$ is uniquely determined by the moment cone $C(\mu)$ as 
\begin{equation}\label{gammaeqn}
\langle \grl_j,\boldsymbol{\grg}\rangle=-1, \qquad j=1,\ldots d.
\end{equation} 
Then using Equation (\ref{Omegaeqn}) one finds that the Reeb vector field $\xi$ should satisfy
\begin{equation}\label{specialReeb}
\langle \xi,\boldsymbol{\grg}\rangle= -(n+1).
\end{equation}
This cuts the Sasaki cone $\grk(\cald,J)$ by a hyperplane giving a convex polytope in $\grS\subset \grk(\cald,J)$. Martelli, Sparks, and Yau \cite{MaSpYau06} (see also \cite{Spa08}) showed that, assuming existence, a Sasaki-Einstein metric is the unique critical point of the restriction of the volume functional to $\grS.$ Explicitly, we consider the functional $\calv:\grS\ra{1.5} \bbr^+$ defined by
\begin{equation}\label{volfun}
\calv(\xi)=\int_Md{\rm vol}(\cals).
\end{equation}
By the convexity of $\grS$, there is a unique critical point of $\calv,$
and Futaki, Ono, and Wang proved that this unique minimum $\xi_0$ has vanishing Sasaki-Futaki invariant $\gS\gF_{\xi_0},$ Equation (\ref{SFinv}), (this was shown in the quasi-regular case in \cite{MaSpYau06}).

\subsection{The Monge-Amp\`ere Equation}
The Sasaki-Futaki invariant $\gS\gF_{\xi}$ only depends on the deformation class ${\mathcal S}(\xi,\bar{J})$ and not on any particular Sasakian structure $\cals\in {\mathcal S}(\xi,\bar{J})$ \cite{BGS06}. However, its vanishing is only a necessary condition for the existence of a Sasaki-Einstein metric. In order to give a sufficient condition, one needs to find a global solution (i.e. for all $t\in [0,1])$ of the a well-known Monge-Amp\`ere problem. This was done by Futaki, Ono, and Wang \cite{FOW06} in the toric case discussed above when $c_1(\cald)=0,$ and $c_1^B(\calf_\xi)$ is positive. The latter conditions imply that $c_1^B(\calf_\xi)=\frac{a}{2\pi}[d\eta]_B$ with $a$ positive. Their result is the Sasaki analogue of a result of Wang and Zhu \cite{WaZh04} who proved the existence of K\"ahler-Ricci solitons on any compact toric Fano manifold together with uniqueness up to holomorphic automorphisms, and then proved that a K\"ahler-Ricci soliton becomes a K\"ahler-Einstein metric if and only if the Futaki invariant vanishes. However, in the Sasakian case there is some more freedom by deforming within the Sasaki cone. As in the K\"ahler case, Futaki, Ono, and Wang consider ``Sasaki-Ricci solitons''. 

\begin{definition}\label{sasricsol}
A {\bf Sasaki-Ricci soliton} is a Sasakian structure $\cals=(\xi,\eta,\Phi,g)$ that satisfies
$$\grr-2nd\eta=\pounds_Xd\eta$$
for some $X\in \gh(\xi,\bar{J}),$ and where $\grr$ is the Ricci form of $g$.  
\end{definition}

Then we say that $\cals=(\xi,\eta,\Phi,g)$ is a Sasaki-Ricci soliton {\it with respect to} $X.$
Note that if also $\pounds_Xd\eta=0$, a Sasaki-Ricci soliton is a Sasaki-Einstein structure. In particular, this holds if $X\in \ga\gu\gt(\cals).$

Now $c_1^B(\calf_\xi)=a[d\eta]_B$ implies the existence of a basic function $h$ such that 
\begin{equation}\label{transverseddbar}
\grr=(a-2)d\eta +d_Bd_B^ch.
\end{equation}
In analogy with the K\"ahlerian case studied by Tian and Zhu \cite{TiZh02}, Futaki, Ono, and Wang introduce a generalized Sasaki-Futaki invariant as follows: Given a nontrivial transversely holomorphic vector field $X$, the transverse Hodge Theorem gaurentees the existence of a smooth basic complex-valued function $\theta_X$ on $M,$ called a {\it holomorphy potential}, satisfying the two equations
\begin{equation}\label{basfn}
 X\hook d\eta = \frac{i}{2\pi}\bar{\partial}\theta_X, \qquad \int_M\theta_X e^{h} \eta\wedge (d\eta)^n =0.
\end{equation}
Then the {\it generalized Sasaki-Futaki invariant} is
\begin{equation}\label{gensasfu}
\gF\gG_X(v)=-\int_M  \theta_v e^{\theta_X}\eta\wedge (d\eta)^n.
\end{equation}
The invariant $\gF\gG_X$ obstructs the existence of Sasaki-Ricci solitons in the same way that the Sasaki-Futaki invariant $\gS\gF$ obstructs the existence of Sasaki-Einstein metrics. Indeed, when $X=0$ it reduces to the Sasaki-Futaki invariant $\gS\gF$. Note that if $\cals=(\xi,\eta,\Phi,g)$ is a Sasaki-Ricci solition, then $\theta_X=h +c$ for some constant $c$, and then $\gF\gG_X(v)=0$ for all $v\in \gh(\xi,\bar{J}).$ As in \cite{TiZh02} one can see that there exists an $X\in \gh(\xi,\bar{J})$ such that $\gF\gG_X$ vanishes on the reductive subalgebra of $\gh(\xi,\bar{J})$.

The existence of Sasaki-Ricci solitons then boils down to proving the existence of solutions for all $t\in [0,1]$ to the Monge-Amp\`ere equation: 
\begin{equation}\label{MAeqn}
\frac{\det (g^T_{i\bar{j}} + \varphi_{i\bar{j}})}{\det (g^T_{i\bar{j}})} = \exp( - t (2m+2)\varphi - \theta_X - X\varphi + h).
\end{equation}
By the continuity method this amounts to finding a $C^0$ estimate for the function $\varphi.$ The proof is quite technical, to which I refer to Section 7 of \cite{FOW06}. I rephase this important result of \cite{FOW06} as 

\begin{theorem}\label{FOW}
Let $(M,\cald)$ be a compact toric contact manifold of Reeb type with $c_1(\cald)=0.$ Then there exists a Sasakian structure $\cals=(\xi,\eta,\Phi,g)$ in the Sasaki cone $\grk(\cald,J)$ such that $g$ is a Sasaki-Einstein metric.
\end{theorem}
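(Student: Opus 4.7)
The plan is to combine the volume-minimization principle of Martelli--Sparks--Yau with a transverse, toric version of the Wang--Zhu existence theorem for K\"ahler--Ricci solitons, and then use vanishing of the Sasaki--Futaki invariant at the minimizer to upgrade the soliton to a Sasaki--Einstein metric. First I would fix the underlying CR structure $(\cald,J)$ coming from the toric contact manifold $(M,\cald)$ and the primitive lattice vector $\boldsymbol{\grg}$ produced from $c_1(\cald)=0$ via \eqref{gammaeqn}. The affine slice $\grS=\{\xi\in \grk(\cald,J)~|~\langle\xi,\boldsymbol{\grg}\rangle=-(n+1)\}$ is an open convex polytope, and the functional $\calv:\grS\ra{1.2}\bbr^+$ of \eqref{volfun} is strictly convex and proper, hence admits a unique critical point $\xi_0\in\grS$. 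By the Martelli--Sparks--Yau computation, the derivative of $\calv$ at $\xi$ can be identified with the Sasaki--Futaki invariant $\gS\gF_\xi$, so $\gS\gF_{\xi_0}=0$; this is the only Reeb candidate that can support a Sasaki--Einstein metric.

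Next I would fix $\xi_0$ and the transverse complex structure $\bar{J}$ on $\nu(\calf_{\xi_0})$ and work within the deformation class ${\mathcal S}(\xi_0,\bar{J})$. Writing $c_1^B(\calf_{\xi_0})=a[d\eta]_B/(2\pi)$ with $a>0$ (an automatic consequence of $c_1(\cald)=0$ and positivity on $\grS$), the Ricci form obeys \eqref{transverseddbar} for a basic function $h$. Finding a Sasaki--Einstein representative amounts to solving the transverse Monge--Amp\`ere equation \eqref{MAeqn} at $t=1$ for a basic potential $\varphi$. Because the setup is toric, I would introduce an intermediate soliton problem: by a transverse Hodge argument together with the reductivity of the maximal compact part of $\gh(\xi_0,\bar{J})$, pick $X\in\gh(\xi_0,\bar{J})$ lying in the Lie algebra of the torus for which the generalized Sasaki--Futaki invariant $\gF\gG_X$ of \eqref{gensasfu} vanishes on the reductive subalgebra, and look for a Sasaki--Ricci soliton with respect to $X$.

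The core analytic step is the continuity method in $t\in[0,1]$ applied to \eqref{MAeqn}. Openness is standard from the implicit function theorem on basic functions once one checks that the linearized operator is invertible (this uses the soliton-adapted weight $e^{\theta_X}d\mu$ and vanishing of $\gF\gG_X$ to kill the cokernel). The hard part, and the step I expect to be the main obstacle, is the a priori $C^0$ estimate on $\varphi$. Here I would exploit toric symmetry: after averaging over the $T^{n+1}$-action, $\varphi$ descends to a convex function on the moment polytope, and the Guillemin/symplectic coordinate formalism of \eqref{transtoricmetric}--\eqref{Legendretrans} reduces \eqref{MAeqn} to a real Monge--Amp\`ere equation on the characteristic polytope. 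Standard barrier/Moser arguments, combined with the normalization $\int_M\theta_X e^h\eta\w(d\eta)^n=0$, then yield a uniform $L^\infty$ bound; higher-order estimates follow from Yau-type $C^2$ and Evans--Krylov arguments in the transverse setting. Closedness thus holds, proving the existence of a Sasaki--Ricci soliton $\cals_0=(\xi_0,\eta_0,\Phi_0,g_0)$.

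Finally I would argue that this soliton is actually Sasaki--Einstein. Since $\gS\gF_{\xi_0}=0$ by the MSY volume criterion, and since along the soliton the holomorphy potential of $X$ equals $h+c$, the identity $\gF\gG_0(X)=\gS\gF_{\xi_0}(X)$ together with a Tian--Zhu style integration by parts forces $X=0$, whence $\pounds_X d\eta_0=0$ and the soliton equation degenerates to $\grr_0=2n\,d\eta_0$. This yields a Sasaki--Einstein metric $g_0$ in $\grk(\cald,J)$ as required.
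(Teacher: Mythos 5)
Your outline reproduces, essentially step for step, the Futaki--Ono--Wang argument that this theorem summarizes; the survey itself does not prove Theorem \ref{FOW} but defers the analysis to Section 7 of \cite{FOW06}. Everything in your architecture --- the Martelli--Sparks--Yau volume minimization over the slice $\grS$ cut out by \eqref{specialReeb}, the identification of the derivative of $\calv$ with the Sasaki--Futaki invariant so that $\gS\gF_{\xi_0}$ vanishes on $\gt$ at the minimizer, the choice of $X\in\gt$ killing $\gF\gG_X$ on the reductive part, the continuity method for \eqref{MAeqn}, and the final reduction $X=0$ forcing the soliton to be Einstein --- is correct and is exactly the strategy described in the text surrounding the theorem.

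The one step you have genuinely papered over is the a priori $C^0$ estimate, and your proposed mechanism for it would fail. ``Standard barrier/Moser arguments'' combined only with the normalization $\int_M\theta_Xe^h\eta\wedge(d\eta)^n=0$ cannot close the estimate at $t=1$: that normalization merely fixes the additive constant in $\theta_X$ and is insensitive to which $X$ one has chosen, so any argument using only it would equally well apply to $X=0$ and would produce transverse K\"ahler--Einstein metrics on every toric Fano base --- false already for the anticanonical circle bundle over $\bbc\bbp^n$ blown up at a point (Example \ref{cpblowups}). The estimate must use the defining property of $X$ (vanishing of $\gF\gG_X$ on $\gt$) in an essential way. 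The actual argument, due to Wang--Zhu \cite{WaZh04} and transplanted to the transverse setting in \cite{FOW06}, writes $\varphi_t$ as a convex function $w_t$ in symplectic coordinates on the characteristic polytope and shows, via a convexity/barycenter analysis that invokes the equation determining $X$, that the minimum point of $w_t$ cannot drift to the boundary and that $\sup|\nabla w_t|$ is uniformly bounded; this is the technical heart of the proof. A minor further caveat: when $\xi_0$ is irregular the characteristic hyperplane is irrational, so there is no honest quotient orbifold and the reduction to a real Monge--Amp\`ere problem must be carried out at the level of basic functions, as is done in \cite{FOW06}.
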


As an immediate application of this theorem we have:

\begin{example}\label{cpblowups}
Let $N_k$ denote $\bbc\bbp^n$ blown-up at $k$ generic points, where $k=1,\ldots,n,$ so that $N_k=\bbc\bbp^n\# k\overline{\bbc\bbp}^n.$ The Lie algebra $\gh(N_k)$ of infinitesimal automorphisms of the complex structure is not reductive, so $N_k$ does not admit K\"ahler-Einstein metrics. Let $M^{2n+1}_k$ denote the total space of the unit sphere bundle in the anti-canonical line bundle $K^{-1}_{N_k}.$ This has a naturally induced toric Sasakian structure, but does not admit a Sasaki-Einstein metric. However, by Theorem \ref{FOW} there is a Reeb vector field $\xi$ within the Sasaki cone $\grk(\cald,J)$ whose associated Sasakian structure is Sasaki-Einstein. Generically, such Sasaki-Einstein structures will be irregular as seen in \cite{GMSW04a}.
\end{example}

In dimension 5 one has \cite{CFO07}

\begin{corollary}\label{toric5se}
For each positive integer $k$ the manifolds $k(S^2\times S^3)$ admit infinite families of toric Sasaki-Einstein structures.
\end{corollary}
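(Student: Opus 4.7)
The plan is to apply Theorem \ref{FOW} to an infinite family of toric contact 5-manifolds of Reeb type, each diffeomorphic to $k(S^2\times S^3)$ and each having $c_1(\cald)=0$. Since Theorem \ref{FOW} supplies a toric Sasaki-Einstein metric in the Sasaki cone of any such contact manifold, producing infinitely many non-equivalent Gorenstein toric contact structures on $k(S^2\times S^3)$ will complete the proof.

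First I would translate the existence problem into combinatorics. By Lerman's classification combined with the Delzant-Guillemin formalism used in \eqref{momcone2}, toric contact structures of Reeb type in dimension 5 are determined by good rational polyhedral cones in $\bbr^3$ specified by primitive inward facet normals $\grl_1,\ldots,\grl_d\in\bbz^3$. The Gorenstein condition $c_1(\cald)=0$, together with \eqref{gammaeqn}, requires a primitive $\boldsymbol{\grg}\in\bbz^3$ with $\langle\grl_j,\boldsymbol{\grg}\rangle=-1$ for every $j$. After an $SL(3,\bbz)$ change of basis we may take $\boldsymbol{\grg}=(-1,0,0)$, so each $\grl_j=(1,a_j,b_j)$ and the cone is the cone over a convex Delzant lattice polygon $P\subset\bbr^2$ with $d$ vertices. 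By Oh's classification and Koll\'ar's torsion formula \eqref{Koltor} (which gives no torsion when the branch divisor is trivial), the underlying manifold is simply connected with $b_2=d-3$ and no torsion in $H_2$; hence it is either $k(S^2\times S^3)$ or $X_\infty\#(k-1)M_\infty$, with $k=d-3$, and the two cases are distinguished by $w_2$, which is a linear condition mod $2$ on the $\grl_j$.

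Next I would build the family. For each fixed $k\geq 1$ I would start from a base Delzant polygon $P_{k,0}$ with $k+3$ edges realizing the spin case, and then, for each positive integer $m$, produce $P_{k,m}$ by elongating a pair of parallel edges by $m$ lattice units while preserving the Delzant smoothness at every vertex and the parity condition ensuring $w_2=0$. Each $P_{k,m}$ corresponds to a toric contact 5-manifold $M_{k,m}$ diffeomorphic to $k(S^2\times S^3)$ with $c_1(\cald)=0$, and Theorem \ref{FOW} then provides a Reeb vector field $\xi_{k,m}$ in the Sasaki cone $\grk(\cald_{k,m},J_{k,m})$ whose associated Sasakian structure is Sasaki-Einstein.

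Finally, to conclude that these structures form an infinite family I would invoke the fact that the moment cone, considered up to $GL(3,\bbz)$, is a complete invariant of the toric contact structure; distinct polygons $P_{k,m}$ (for instance, distinguished by lattice area) therefore yield non-equivalent toric contact structures, so the Sasaki-Einstein metrics on $M_{k,m}$ are pairwise non-isomorphic. For generic $m$ the Reeb vector field produced by Theorem \ref{FOW} will be irregular, so these structures cannot be identified by transverse homothety either. I expect the main technical obstacle to be writing down a polygon family $P_{k,m}$ that uniformly in $k$ simultaneously satisfies the Delzant vertex condition and the spin-parity condition on the facet normals; both are elementary lattice-theoretic checks, but arranging an explicit construction that covers all $k$ requires some care.
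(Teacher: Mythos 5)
Your proposal is correct and follows essentially the same route as the paper, which obtains the corollary by feeding infinitely many inequivalent good Gorenstein moment cones realizing $k(S^2\times S^3)$ into Theorem \ref{FOW}, with the combinatorial construction of those cones supplied by \cite{CFO07}. Two small remarks: the spin-parity condition you list as a technical obstacle is automatic, since $TM=\cald\oplus L_\xi$ with $L_\xi$ trivial gives $w_2(M)\equiv c_1(\cald)\equiv 0\pmod 2$, so once the Gorenstein condition is imposed the non-spin manifold $X_\infty\#(k-1)M_\infty$ can never arise; on the other hand, simple connectivity is not automatic and requires the facet normals $\grl_1,\ldots,\grl_{k+3}$ to generate the full lattice $\bbz^3$, a condition your polygon family must also be arranged to preserve.
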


In \cite{CFO07} the Sasakian analogue of a the well-known uniqueness result of Bando and Mabuchi \cite{BaMa85} for Fano manifolds was proven. Explicitly, Cho, Futaki, and Ono prove that if $c_1(\cald)=0$ and $c_1^B(\calf_\xi)>0$, then a Sasaki-Einstein metric is unique up to transverse holomorphic transformations. However, there is another uniqueness question that was posed in \cite{BG05} as Conjecture 11.1.18. This conjectures that Sasaki-Einstein metrics can occur for at most one Reeb vector field in the Sasaki cone.

If one replaces Sasaki-Einstein by Sasaki-Ricci soliton, then the conjecture does not hold as seen by considering the circle bundle $M^{2n+1}$ over $\bbc\bbp^{n+1}\#\overline{\bbc\bbp^{n+1}}$ defined by the primitive cohomology class of its anti-canonical line bundle. Now the base is toric so the manifold $M^{2n+1}$ is a toric contact manifold with a compatible Sasakian structure $\cals_0.$ Moreover, it is well-known that $\bbc\bbp^{n+1}\#\overline{\bbc\bbp^{n+1}}$ does not admit a K\"ahler-Einstein metric. Thus by Theorem \ref{FOW}, there is another Reeb vector field $\xi$ on $M^{2n+1}$ whose Sasakian structure $\cals=(\xi,\eta,\Phi,g)$ is Sasaki-Einstein. But according to Tian and Zhu \cite{TiZh02} $\bbc\bbp^{n+1}\#\overline{\bbc\bbp^{n+1}}$ admits a unique K\"ahler-Ricci soliton. Hence, the circle bundle $M^{2n+1}$ admits a Sasaki-Ricci soliton which can be taken as the Sasakian structure $\cals_0=(\xi_0,\eta_0,\Phi_0,g_0).$ So the toric contact manifold $(M^{2n+1},\cald,T^{n+1})$ admits two distinct Sasaki-Ricci solitions one of which is Sasaki-Einstein.

Dropping the condition $c_1(\cald)$ one can ask whether Theorem \ref{FOW} holds with Sasaki-Einstein being replaced by constant scalar curvature, or perhaps more  generally, a strongly extremal Sasakian structure. This is currently under study.

\subsection{Towards a Classification of Toric Sasaki-Einstein Structures}
By combining the Existence Theorem \ref{FOW} of Futaki, Ono, and Wang with the Delzant type theorem of Lerman \cite{Ler02}, it is possible to give a classification of toric Sasaki-Einstein structures along the lines that was done for toric 3-Sasakian structures \cite{BGMR98,Bie99} (see Section 7 of Chapter 13 in \cite{BG05}). This also has the advantage of giving an explicit description in terms of matrices.

Consider the symplectic reduction of $\bbc^n$ (or
equivalently the Sasakian reduction of $S^{2n-1}$) by a $k$-dimensional torus $T^k.$ Every
complex representation of $T^k$ on $\bbc^{n}$ can be described
by an exact sequence
$$0\ra{1.3}T^k\fract{f_\Omega}{\ra{1.3}} T^{n}\ra{1.3} T^{n-k}\ra{1.3}0\, .$$
The monomorphism $f_\grO$ can be represented by the diagonal matrix
$$f_\Omega(\tau_1,\ldots,\tau_k)={\rm diag}\Biggl(
\displaystyle{\prod_{i=1}^k\tau_i^{a^i_1}},\ldots,
\displaystyle{\prod_{i=1}^k\tau_i^{a^i_{n}}}\Biggr)\, ,$$ where
$(\tau_1,..,\tau_k)\in S^1\times\cdots\times S^1=T^k$ are the
complex coordinates on $T^k,$ and $a^i_\gra\in \bbz$  are the
coefficients of the integral {\it weight} matrix
$\Omega\in\calm_{k,n}(\bbz),$ where $\calm_{k,n}(R)$ denotes the space of $k\times n$ matrices over the commutative ring $R.$ The following proposition was given in \cite{BG05}:

\begin{proposition}\label{toric.Sasakian.quotients}
Let $X(\Omega)=(\bbc^n\setminus{\bf0})/\!\!/T^k(\Omega)$ denote
the K\"ahler quotient of the standard flat K\"ahler structure on
$(\bbc^n\setminus{\bf0})$  by the weighted Hamiltonian
$T^k$-action with an integer weight matrix $\Omega.$ Consider the
K\"ahler moment map
\begin{equation}
\mu^i_\Omega(\bfz)=
\sum_{\alpha=1}^{n}a^i_{\alpha}|z_\alpha|^2,\qquad i=1,\ldots,k\,.
\end{equation}
If all minor $k\times k$ determinants of \ $\Omega$ are non-zero
then there is a choice of $\grO$ such that $X(\Omega)=C(Y(\Omega))$ is a cone on a compact Sasakian
orbifold $Y(\Omega)$ of dimension $2(n-k)-1$ which is the Sasakian
reduction of the standard Sasakian structure on $S^{2n-1}.$ In
addition, the projectivization of $X(\Omega)$ defined by
$\calz(\Omega)=X(\Omega)/\bbc^*$ is a K\"ahler reduction of the
complex projective  space $\bbc\bbp^{n-1}$ by a Hamiltonian
$T^k$-action defined by $\Omega$ and it is the transverse space of
the Sasakian structure on $Y(\Omega)$ induced by the quotient. If
\begin{equation}\label{CY.condition}
\sum_\alpha a^i_\alpha=0,\qquad \forall~ i=1,\ldots,k
\end{equation}
then $c_1(X(\Omega))=c_1(\cald)=0.$ In particular, the orbibundle
$Y(\Omega)\ra{1.2}\calz(\Omega)$ is anticanonical. Moreover, the
cone $C(Y(\Omega))$, its Sasakian base $Y(\Omega)$, and the
transverse space $\calz(\Omega)$ are all toric orbifolds.
\end{proposition}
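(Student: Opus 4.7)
The plan is to build the quotient in two equivalent ways—as a Kähler cone via symplectic reduction of $\bbc^n$, or as a Sasakian base via the Sasakian reduction of $S^{2n-1}$—and then verify successively the orbifold regularity, the cone structure, the vanishing first Chern class, and finally the toric property. First I would set up the Hamiltonian $T^k$-action on $(\bbc^n,\omega_0)$ with $\omega_0=\frac{i}{2}\sum_\alpha dz_\alpha\wedge d\bar z_\alpha$ coming from $f_\Omega$, and observe by direct computation that the moment map is exactly $\mu^i_\Omega(\bfz)=\sum_\alpha a^i_\alpha|z_\alpha|^2$, homogeneous of degree two under the $\bbr^+$-scaling on $\bbc^n\setminus\{\bf 0\}.$ Since $f_\Omega(T^k)\subset T^n\subset U(n)$, the action commutes with the diagonal $S^1\subset T^n$ that generates the Reeb flow, hence preserves the standard Sasakian structure on $S^{2n-1}$.

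Next I would address regularity. At $\bfz\in \mu_\Omega^{-1}(0)\setminus\{\bf 0\}$, the infinitesimal $T^k$-action vanishes along $\grx\in\gt_k$ precisely when $\sum_i\grx_ia^i_\alpha=0$ for every $\alpha$ with $z_\alpha\neq 0$. The hypothesis that every $k\times k$ minor of $\Omega$ is non-zero makes any $k$ columns of $\Omega$ linearly independent; the point of the phrase \emph{``there is a choice of $\Omega$''} is that, after acting by $GL(k,\bbz)$ on the left (which only reparametrizes $T^k$), we may arrange that each row of $\Omega$ contains entries of both signs, so $\mu_\Omega^{-1}(0)\cap S^{2n-1}$ is non-empty and every point there has at least $k$ nonzero coordinates. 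The minor hypothesis then forces the action to be locally free on $\mu_\Omega^{-1}(0)\setminus\{\bf 0\}$, so $0$ is a regular value in the orbifold sense. Applying the Marsden–Weinstein reduction (or its Kähler refinement) yields a Kähler orbifold $X(\Omega)=(\mu_\Omega^{-1}(0)\setminus\{\bf 0\})/T^k$ of real dimension $2(n-k)$; homogeneity of $\mu_\Omega$ under $\bbr^+$ gives $X(\Omega)=C(Y(\Omega))$ with $Y(\Omega)=(\mu_\Omega^{-1}(0)\cap S^{2n-1})/T^k$ a compact Sasakian orbifold of dimension $2(n-k)-1$, by the Sasakian reduction theorem recalled in Section 8.5 of \cite{BG05}.

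Third, since the ambient $\bbc^*$-scaling on $\bbc^n\setminus\{\bf 0\}$ commutes with $f_\Omega(T^k)$ and fixes the level set, it descends to $X(\Omega)$ with quotient $\calz(\Omega)=X(\Omega)/\bbc^*$; identifying $\bbc\bbp^{n-1}=(\bbc^n\setminus\{\bf 0\})/\bbc^*$ and interchanging the order of quotients exhibits $\calz(\Omega)$ as the Kähler reduction of $\bbc\bbp^{n-1}$ by $T^k$, and this is exactly the transverse Kähler structure of the characteristic foliation on $Y(\Omega)$. For the Calabi–Yau property, I would use the holomorphic volume form $\grO_0=dz_0\wedge\cdots\wedge dz_{n-1}$ on $\bbc^n$: condition \eqref{CY.condition} says $\det f_\Omega(\tau)=\prod_i\tau_i^{\sum_\alpha a^i_\alpha}\equiv 1$, so $\grO_0$ is $T^k$-invariant and hence descends to a nowhere vanishing holomorphic $(n-k)$-form on the smooth locus of $X(\Omega)$, trivializing the canonical bundle of $C(Y(\Omega))$. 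This is equivalent to $c_1(\cald)=0$ on $Y(\Omega)$, which in turn means the orbibundle $Y(\Omega)\to\calz(\Omega)$ is anticanonical.

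Finally, for the toric claim, note that the quotient torus $T^{n-k}=T^n/f_\Omega(T^k)$ acts effectively on $X(\Omega)$, $Y(\Omega)$ and $\calz(\Omega)$ preserving all relevant structures; the dimension matches, giving a toric orbifold in each case. The main obstacle I anticipate is the local-freeness argument in the second step: one must handle the coordinate strata of $\bbc^n$ (points with varying numbers of vanishing coordinates) to show that the minor hypothesis, together with an admissible rechoice of $\Omega$ via $GL(k,\bbz)$, genuinely guarantees that stabilizers are finite everywhere on $\mu_\Omega^{-1}(0)\setminus\{\bf 0\}$; the remaining steps are then essentially mechanical applications of Kähler reduction together with the Sasakian reduction machinery.
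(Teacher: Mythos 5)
Your overall route --- Marsden--Weinstein/K\"ahler reduction of $\bbc^n\setminus\{{\bf 0}\}$, compatibility with the radial scaling to get the cone $C(Y(\Omega))$, commuting the $\bbc^*$- and $T^k$-quotients to identify $\calz(\Omega)$, triviality of $\det f_\Omega$ under \eqref{CY.condition} to get the Gorenstein/anticanonical statement, and the residual $T^{n-k}=T^n/f_\Omega(T^k)$ for the toric claim --- is exactly the standard argument; note that the paper itself offers no proof but simply cites \cite{BG05}, so there is nothing more specific to compare against. Your observation that the minor condition forces any point of $\mu_\Omega^{-1}(0)\setminus\{{\bf 0}\}$ to have support of size at least $k$ (in fact at least $k+1$, since any $m\le k$ columns are linearly independent and so admit no positive null vector) and hence trivial stabilizer algebra is the right way to get local freeness.

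There is, however, a genuine flaw in the step you yourself single out as the main obstacle. Acting on $\Omega$ by $GL(k,\bbz)$ on the left merely changes the basis of $\gt_k$; it replaces $\mu_\Omega$ by $A\circ\mu_\Omega$ and therefore does \emph{not} change the level set $\mu_\Omega^{-1}(0)$, which depends only on the row space of $\Omega$. So no such reparametrization can turn an empty quotient into a non-empty one, and ``each row has entries of both signs'' is in any case only necessary, not sufficient, for $\mu_\Omega^{-1}(0)\ne\{{\bf 0}\}$. The correct reading of ``there is a choice of $\Omega$'' is that, beyond the minor condition, one must impose a genuine positivity restriction on the weight matrix itself --- namely that the system $\Omega x=0$ admits a strictly positive solution $x\in\bbr^n_{>0}$, equivalently that $0$ lies in the interior of the convex hull of the columns of $\Omega$ --- which guarantees $\mu_\Omega^{-1}(0)\cap S^{2n-1}\ne\emptyset$; combined with your local-freeness argument this yields the compact Sasakian orbifold $Y(\Omega)$. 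A second, smaller point: an invariant $n$-form $dz_1\wedge\cdots\wedge dz_n$ does not literally ``descend'' to an $(n-k)$-form on a $2(n-k)$-dimensional quotient; one must first contract with the fundamental vector fields of the complexified $(\bbc^*)^k$-action (i.e.\ take $V_1\hook\cdots\hook V_k\hook\,dz_1\wedge\cdots\wedge dz_n$ restricted to the level set) and check that the result is basic and nowhere vanishing --- which again uses the local freeness --- before concluding that $K_{C(Y(\Omega))}$ is trivial and hence $c_1(\cald)=0$.
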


Of particular interest are the conditions under which the orbifold $Y(\grO)$ is a smooth manifold. A complete study of this classification is currently in progress.
The special case $n=4$ and $k=1$ gives all toric Sasaki-Einstein structures on $S^2\times S^3.$ A subset of these was considered first in \cite{GMSW04a} giving the first examples of irregular Sasaki-Einstein structures. The general case was described independently in \cite{MaSp05b,CLPP05}. A completely different approach to toric Sasaki-Einstein geometry was given in \cite{coe06,coe07}.

\newcommand{\etalchar}[1]{$^{#1}$}
\def\cprime{$'$} \def\cprime{$'$} \def\cprime{$'$} \def\cprime{$'$}
  \def\cprime{$'$} \def\cprime{$'$} \def\cprime{$'$} \def\cprime{$'$}
  \def\cdprime{$''$}
\providecommand{\bysame}{\leavevmode\hbox to3em{\hrulefill}\thinspace}
\providecommand{\MR}{\relax\ifhmode\unskip\space\fi MR }
\providecommand{\MRhref}[2]{%
  \href{http://www.ams.org/mathscinet-getitem?mr=#1}{#2}
}
\providecommand{\href}[2]{#2}

\end{document}